\newtheorem{theorem}{Theorem}
\newtheorem{lemma}[theorem]{Lemma}
\newtheorem{corollary}[theorem]{Corollary}
\newtheorem{remark}[theorem]{Remark}
\newtheorem{conjecture}{Conjecture} 
\journal{Discrete Mathematics}
\newdefinition{problem}{Problem} 
\begin{document}

\begin{frontmatter}



\title{{Minimal obstructions to $(\infty, k)$-polarity in cographs\tnoteref{t1}}}

\tnotetext[t1]{This research was supported by the
SEP-CONACYT grant A1-S-8397, the CONACYT
FORDECYT-PRONACES/39570/2020 grant and the
DGAPA-PAPIIT grant IA104521.}

\author[FC]{F. Esteban Contreras-Mendoza}
\ead{esteban.contreras@ciencias.unam.mx}

\author[FC]{C\'esar Hern\'andez-Cruz\corref{cor1}}
\ead{chc@ciencias.unam.mx}

\address[FC]{Facultad de Ciencias\\
Universidad Nacional Aut\'onoma de M\'exico\\
Av. Universidad 3000, Circuito Exterior S/N\\
C.P. 04510, Ciudad Universitaria,
CDMX, M\'exico}

\cortext[cor1]{Corresponding author}

\begin{abstract}

A graph is a cograph if it does not contain a 4-vertex
path as an induced subgraph.   An $(s, k)$-polar
partition of a graph $G$ is a partition $(A, B)$ of its
vertex set such that $A$ induces a complete multipartite
graph with at most $s$ parts, and $B$ induces the
disjoint union of at most $k$ cliques with no other
edges.   A graph $G$ is said to be $(s, k)$-polar if it admits an $(s, k)$-polar partition.   The concepts of
$(s, \infty)$-, $(\infty, k)$-, and $(\infty,
\infty)$-polar graphs can be analogously defined.

Ekim, Mahadev and de Werra pioneered in the research on
polar cographs, obtaining forbidden induced subgraph
characterizations for $(\infty, \infty)$-polar cographs,
as well as for the union of $(\infty, 1)$- and
$(1, \infty)$-polar cographs.    Recently, a recursive
procedure for generating the list of cograph minimal
$(s,1)$-polar obstructions for any fixed integer $s$
was found, as well as the complete list of $(\infty,
1)$-polar obstructions.   In addition to these results,
complete lists of minimal $(s, k)$-polar cograph
obstructions are known only for the pair $(2, 2)$.

In this work we are concerned with the problem of
characterizing $(\infty, k)$-polar cographs for a
fixed $k$ through a finite family of forbidden induced
subgraphs. As our main result, we provide complete
lists of forbidden induced subgraphs for the cases
$k=2$ and $k=3$.   Additionally, we provide a partial
recursive construction for the general case.   By
considering graph complements, these results extend
to $(s, \infty)$-polar cographs.

\end{abstract}

\begin{keyword}
Polar graph \sep cograph \sep forbidden subgraph characterization \sep matrix partition \sep generalized colouring

\MSC 05C69 \sep 05C70 \sep 05C75 \sep 68R10
\end{keyword}

\end{frontmatter}




\section{Introduction}\label{sec:Introduction}


All graphs in this paper are considered to be finite
and simple.   We refer the reader to \cite{bondy2008}
for basic terminology and notation.   In particular,
we use $P_k$ and $C_k$ to denote the path and cycle
on $k$ vertices, respectively.   We also use $\overline
G$ and $G[X]$ to denote the complement of a graph $G$
and the subgraph of $G$ induced by the vertex set $X$,
respectively.

Cographs have been independently defined by several
authors since the 1970's.   The original definition
of cograph was introduced by Corneil, Lerchs and
Stewart Burlingham in \cite{corneilDAM3}; it is based
on the following recursive criteria: $K_1$ is a cograph;
if $G$ is a cograph, then its complement $\overline G$
is also a cograph; if $G$ and $H$ are cographs, so is
their disjoint union.   One of the best known
characterizations of cographs is that they
are precisely the $P_4$-free graphs (graphs without
$P_4$ as an induced subgraph).

Alternatively, cographs can be defined as graphs that
can be constructed from single vertex graphs by means
of disjoint-union and join operations.   In
\cite{corneilDAM3} a special rooted tree was introduced
to represent a cograph $G$: the leaf vertices are
associated with the vertices of $G$, and each internal
node is labeled $0$ or $1$ indicating the operation,
join or disjoint-union, performed on the cographs
associated with their children, respectively.
Furthermore, this tree must be such that the nodes in
a root-leaf path have alternating labels for ensuring
that each cograph is associated with only one of these
trees. Such a tree is called the {\em cotree} associated
with $G$.

A graph property is said to be {\em hereditary} if every
induced subgraph of a graph with such property also has
the property.   In \cite{damaschkeJGT14}, Peter Damaschke
proved that every hereditary property can be
characterized by a finite family of forbidden induced
subgraphs when restricted to cographs.   Thus, finding
the family of minimal forbidden induced subgraphs
characterizing a given hereditary property in the
class of cographs comes as a natural problem.

A {\em cluster} is the complement of a complete
multipartite graph, and, for a non-negative integer
$k$, a {\em $k$-cluster} is a cluster with at
most $k$ components. Given non-negative integers
$s$ and $k$, we define an {\em $(s,k)$-polar
partition} of a graph $G$ to be a partition
$(A,B)$ of $V_G$ such that $A$ induces a
complete $s$-partite graph and $B$ induces a
$k$-cluster. If a graph admits an $(s,k)$-polar
partition, we will say that it is {\em $(s,
k)$-polar}. We will use $\infty$ instead of $s$,
$k$ or both, to indicate that the number of parts
in the multipartite graph, or the number of
components in the cluster, respectively, is
unbounded.   Hence, we will say that a graph
$G$ is an $(s, \infty)$-polar graph if its
vertex set admits a partition $(A,B)$ where
$A$ is a complete $s$-partite graph, and $B$
is a cluster; such partition is an
$(s, \infty)$-polar partition.   The concepts
of $(\infty, k)$- and $(\infty, \infty)$-polar
graphs and partitions are analogously defined.
A {\em polar graph} is just an $(\infty,
\infty)$-polar graph.

Clearly, for $s,k \in \mathbb Z^+ \cup \{0,
\infty\}$, having an $(s,k)$-polar partition is
a hereditary property, and thus, as we have
already mentioned, $(s,k)$-polar cographs can be
characterized by a finite family of forbidden
induced subgraphs\footnote{This observation also
follows from the results in \cite{feder2006}, in
the context of matrix partitions of cographs.}.
A {\em cograph $(s,k)$-polar obstruction} is a
cograph which is not $(s,k)$-polar, and a {\em
cograph minimal $(s,k)$-polar obstruction} is a
cograph $(s, k)$-polar obstruction such that
every proper induced subgraph is $(s,k)$-polar.


Chernyak and Chernyak proved in
\cite{chernyakDM62} that determining whether
a graph is polar is an $\mathcal{NP}$-complete
problem, in \cite{farrugiaEJC11}
Farrugia proved that the problem remains
$\mathcal{NP}$-complete for
$(1, \infty)$-polar graphs, and Churchley and
Huang proved in \cite{churchleyJGT76} that
the latter problem remains
$\mathcal{NP}$-complete even when restricted
to triangle-free graphs. In contrast, the
results on sparse-dense partitions in
\cite{federSIAMJDM16} imply that for any fixed
intergers $s$ and $k$, $(s,k)$-polar graphs
can be recognized in polynomial time. Recognition
and minimal obstructions for polar graphs have been
studied for other graph classes, e.g.,
chordal graphs \cite{ekimDAM156}, permutation
graphs \cite{ekimIWOCA2009}, or graphs having
a polar line graph \cite{churchleySIAMJDM25}.
In \cite{leTCS528}, several other families are
studied for the complexity of the recognition
problem for polar and $(1, \infty)$-polar
graphs; in particular, families having a
polynomial $(1, \infty)$-polar recognition
problem together with a small subfamily having
an $\mathcal{NP}$-complete polar recognition
problem are presented.

In terms of minimal obstructions, for very small
values of $s$ and $k$ the minimal $(s,k)$-polar
obstructions are well known; a graph is
$(0,k)$-polar if and only if it is a disjoint
union of at most $k$-cliques, it is $(s,0)$-polar
if and only if it is a complete $s$-partite graph,
and it is $(1,1)$-polar if and only if it is a
split graph.   It was shown by Foldes and Hammer
\cite{foldesSECGTC} that a graph is split if and
only if it is $\{ 2K_2, C_4, C_5 \}$-free; it is
folklore that a graph is a disjoint union of at
most $k$-cliques if and only if its independence
number is at most $k$ and it is $P_3$-free, which
by complementation implies that a graph is a
complete $s$-partite graph if and only if it is
$\{K_{s+1}, \overline{P_3}\}$-free.

In this work we focus on cograph minimal
obstructions for $(s,k)$-polarity. Ekim, Mahadev
and de Werra proved in \cite{ekimDAM156a} that
there are only eight cograph minimal polar
obstructions, and sixteen cograph minimal
$(s,k)$-polar obstructions when $\min\{ s, k \}
= 1$, \cite{ekimDAM171}.  Hell, Hern\'andez-Cruz
and Linhares-Sales proved in \cite{hellDAM},
that there are $48$ cograph minimal $(2,2)$-polar
obstructions. The exhaustive list of nine cograph
minimal $(2,1)$-polar obstructions was found by
Bravo, Nogueira, Protti and Vianna, \cite{bravo}.
Recently, Contreras-Mendoza and Hern\'andez-Cruz
exhibited a simple recursive characterization to
obtain all the cograph minimal $(s, 1)$-polar
obstructions for an arbitrary integer $s$, as
well as the complete list of cograph minimal
$(\infty, 1)$-polar obstructions \cite{contrerasDAM}.

We provide a partial recursive characterization for
cograph minimal $(\infty, k)$-polar obstructions. We
also exhibit complete lists of cograph minimal
$(\infty, k)$-polar obstructions for $k = 2$ and
$k = 3$.   By taking complements it is trivial to
obtain analogous results for $(s, \infty)$-polar
cographs.

We say that a component of $G$ is {\em trivial}
or an {\em isolated vertex} if it is isomorphic
to $K_1$. Given graphs $G$ and $H$, the disjoint
union of $G$ and $H$ is denoted by $G + H$, and
the join of $G$ and $H$ is denoted by $G \oplus
H$.   Thus, the sum of $n$ disjoint copies of
$G$ is denoted by $nG$.

Let $k,c,i$ be integers such that $0 \le i < c
\le k+2$.   We say that a cograph minimal $(\infty,
k)$-polar obstruction has {\em type} $(c,i)$ if it
has exactly $c$ connected components and precisely
$i$ of them are trivial. We divide the types in
three main classes: type $(c,0)$, which corresponds
to obstructions without isolated vertices, type
$(c,c-1)$, associated with obstructions
that have precisely one non-trivial component,
and the rest of the types, that contain obstructions
with at least one isolated vertex and at least one
complete component of order $2$.

The rest of the paper is organized as follows.
In Section \ref{sec:PreliminaryResults}, we
introduce some previously known results and
use them to characterize connected cograph
minimal $(\infty,k)$-polar obstructions.
Sections \ref{sec:type(c,0)} and \ref{sec:type(c,c-1)}
are devoted to the study of the structure
of $(c,0)$- and $(c,c-1)$-type obstructions,
respectively; a recursive characterization for the
obstructions of type $(c,0)$ is given.
In Section \ref{sec:remainingTypes} we establish
a pleasant simple characterization for the rest of
the cograph minimal $(\infty, k)$-polar obstructions.
Finally, in Section \ref{sec:MainResults} we
prove our main results, we exhibit complete
lists of cograph minimal $(\infty, k)$-polar
obstructions for the cases $k=2$ and $k=3$.
Conclusions and future lines of work are
presented in Section \ref{sec:Conclusions}.


\section{Preliminary results}
\label{sec:PreliminaryResults}


We begin this section introducing three
previously-known results concerning the
characterizations of $(1, \infty)$-,
$(\infty, \infty)$-, and $(1, s)$-polar
cographs for a fixed positive integer $s$.
Such results will be helpful in the
development of various of our results.

\begin{theorem}[Contreras-Mendoza \&
Hern\'andez-Cruz, \cite{contrerasDAM}]
\label{theo:Charact(1,infty)PO}
A graph $G$ is a cograph minimal $(1,
\infty)$-polar obstruction if and only if
$G$ is isomorphic to one of the graphs
depicted in Figure \ref{fig:CM(1,infty)PO}.
\end{theorem}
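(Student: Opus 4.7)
The proof has two directions. For the easier direction, verifying that each graph depicted in Figure~\ref{fig:CM(1,infty)PO} is a cograph minimal $(1,\infty)$-polar obstruction reduces to a finite check: confirm $P_4$-freeness, show that no partition $(A,B)$ of the vertex set with $A$ independent and $B$ inducing a cluster exists (by enumerating the candidate independent sets $A$ and exhibiting an induced $P_3$ in $G[V\setminus A]$ in each case), and for each vertex $v$ produce an explicit $(1,\infty)$-polar partition of $G-v$. This is routine case analysis on a finite list of small graphs.

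For the harder direction, let $G$ be a cograph minimal $(1,\infty)$-polar obstruction. The first observation is that $(1,\infty)$-polarity is preserved under disjoint union: given partitions $(A_i,B_i)$ of $H_i$ for $i=1,2$, the pair $(A_1\cup A_2,\, B_1\cup B_2)$ witnesses $(1,\infty)$-polarity of $H_1+H_2$. Minimality then forces $G$ to be connected, since otherwise each component would be $(1,\infty)$-polar and the combined partition would make $G$ itself $(1,\infty)$-polar. Hence the cotree of $G$ is rooted at a join node, and we can write $G=H_1\oplus H_2$ for suitable nonempty cographs $H_1,H_2$.

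The core of the argument is an analysis of this join. Because $A$ must be independent and every vertex of $V(H_1)$ is adjacent to every vertex of $V(H_2)$, any candidate $(1,\infty)$-polar partition $(A,B)$ satisfies $A\subseteq V(H_1)$ or $A\subseteq V(H_2)$. Suppose $A\subseteq V(H_1)$; then $V(H_2)\subseteq B$, and if $B\cap V(H_1)\neq\emptyset$ the join forces the connected component of $G[B]$ containing $V(H_2)$ to absorb all of $B$ into a single clique, so $H_2$ must be complete and $B\cap V(H_1)$ must induce a clique in $H_1$. Translating the non-existence of any valid $(A,B)$ through these two cases yields a small collection of structural conditions that $H_1$ and $H_2$ must simultaneously violate; each such condition is either plain $(1,\infty)$-polarity or a refinement of it (for instance, admitting a partition whose cluster side is itself a clique, or whose independent side is empty). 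Minimality of $G$ then implies that each of $H_1$ and $H_2$ is ``just one step away'' from satisfying the relevant conditions, which by induction on $|V(G)|$ drives them into a finite family of small cographs.

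The main obstacle, as I see it, will be organising this case analysis cleanly. One needs to track not only $(1,\infty)$-polarity along the cotree recursion but also several auxiliary partition problems on $H_1$ and $H_2$ (of the flavour of $(1,1)$-polarity with side constraints), and to set up a compact bookkeeping invariant that terminates the induction in finitely many cases. Once that framework is in place, the base cases together with the join recursion should produce exactly the graphs exhibited in Figure~\ref{fig:CM(1,infty)PO}.
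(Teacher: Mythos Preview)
The paper does not actually prove this theorem: it is quoted from \cite{contrerasDAM} as a previously established result and is used only as a tool in later arguments. Consequently there is no ``paper's proof'' to compare your attempt against.

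As for the content of your sketch: the strategy is sound as far as it goes. The reduction to connected cographs via closure of $(1,\infty)$-polarity under disjoint union is correct, and your analysis of a join $G=H_1\oplus H_2$ is on the right track: any independent part $A$ lies entirely in one side, and the requirement that $G[B]$ be a cluster then forces the other side to be complete (and the remainder of $B$ on the first side to induce a clique) unless $B$ misses that side entirely. Spelling this out, $G$ fails to be $(1,\infty)$-polar exactly when all of the following fail: (i) $H_1$ edgeless and $H_2$ a cluster; (ii) $H_2$ complete and $H_1$ split; and the two symmetric conditions. What remains is to combine this with minimality to pin down $H_1$ and $H_2$; you acknowledge that this bookkeeping is the real work, but you do not carry it out. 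So the proposal is a plausible outline rather than a proof, and since the present paper offers no argument of its own for this statement, there is nothing further to compare.
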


\begin{figure}[h!]
\begin{center}
\begin{tikzpicture}
[every circle node/.style ={circle,draw,minimum size= 5pt,
inner sep=0pt, outer sep=0pt},
every rectangle node/.style ={}];

\begin{scope}[xshift=-10, scale=1.5]
\node [circle] (1) at (0,0)[]{};
\node [circle] (2) at (1,1)[]{};
\node [circle] (3) at (0,1)[]{};
\node [circle] (4) at (1,0)[]{};
\node [circle] (5) at (0.5,0.5)[]{};
\foreach \from/\to in {1/3,1/4,1/5,2/3,2/4,2/5,3/5,4/5}
\draw [-, shorten <=1pt, shorten >=1pt, >=stealth, line width=.7pt]
(\from) to (\to);
\node [rectangle] (1) at (0.5,-0.625){$K_1 \oplus C_4$};
\end{scope}

\begin{scope}[xshift=140, scale=1.25, rotate=90]
\node [circle] (1) at (0,0)[]{};
\node [circle] (2) at (1,0)[]{};
\node [circle] (3) at (0,1)[]{};
\node [circle] (4) at (1,1)[]{};
\node [circle] (5) at (0,2)[]{};
\node [circle] (6) at (1,2)[]{};
\foreach \from/\to in {1/2,1/3,1/4,2/4,2/3,3/4,3/4,3/5,3/6,4/5,4/6,5/6}
\draw [-, shorten <=1pt, shorten >=1pt, >=stealth, line width=.7pt]
(\from) to (\to);
\node [rectangle] (1) at (-0.75,1){$K_2 \oplus 2K_2$};
\end{scope}

\begin{scope}[xshift=180, scale=1.5]
\node [circle] (1) at (0,0)[]{};
\node [circle] (2) at (1,0)[]{};
\node [circle] (3) at (1,1)[]{};
\node [circle] (4) at (0,1)[]{};
\node [circle] (5) at (0.3,0.6)[]{};
\node [circle] (6) at (0.7,0.6)[]{};
\foreach \from/\to in {1/2,1/4,1/5,1/6,2/3,2/5,2/6,3/4,3/6,4/5,5/6}
\draw [-, shorten <=1pt, shorten >=1pt, >=stealth, line width=.7pt]
(\from) to (\to);
\node [rectangle] (1) at (0.5,-0.625){$\overline{2P_3}$};
\end{scope}

\begin{scope}[xshift=270, scale=1.1]
\node [circle] (1) at (1,1)[]{};
\node [circle] (2) at (0,0)[]{};
\node [circle] (3) at (0,1)[]{};
\node [circle] (4) at (0,2)[]{};
\node [circle] (5) at (1.8,1.5)[]{};
\node [circle] (6) at (1.8,0.5)[]{};
\foreach \from/\to in {1/2,1/3,1/4,1/5,1/6,2/3,3/4,5/6}
\draw [-, shorten <=1pt, shorten >=1pt, >=stealth, line width=.7pt]
(\from) to (\to);
\node [rectangle] (1) at (0.9,-0.852){$K_1 \oplus (K_2+ P_3)$};
\end{scope}

\end{tikzpicture}
\end{center}
\caption{$(1, \infty)$-polar obstructions.}
\label{fig:CM(1,infty)PO}
\end{figure}
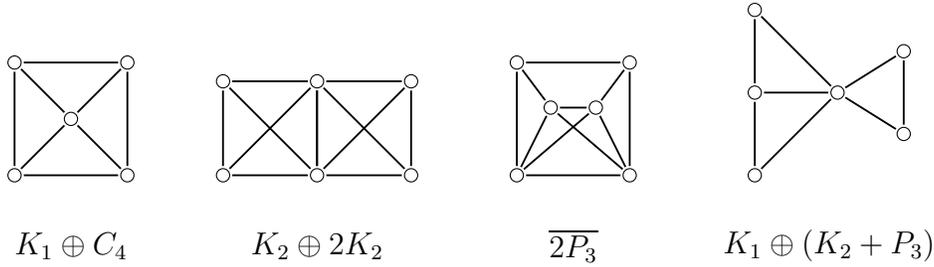

\begin{theorem}[Ekim, Mahadev \& de Werra,
\cite{ekimDAM156a}] 
\label{theo:CharactPolarCographs}
A graph $G$ is a cograph minimal polar
obstruction if and only if $G$ or its
complement is isomorphic to $P_3 + H$,
where $H$ is any cograph minimal $(1,
\infty)$-polar obstruction.
\end{theorem}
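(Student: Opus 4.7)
I will prove the two directions separately, relying throughout on the following structural observation: in any polar partition $(A, B)$ of a disconnected graph $G$, two vertices of $A$ from different components are non-adjacent in $G$ and hence must lie in the same part of the complete multipartite graph $G[A]$, so either $A$ is contained in a single component of $G$, or $A$ is an independent set and $(A, B)$ is actually a $(1,\infty)$-polar partition.

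For the forward direction, let $G = P_3 + H$ with $H$ one of the four graphs in Figure \ref{fig:CM(1,infty)PO}. A hypothetical polar partition $(A, B)$ of $G$ splits into three cases: if $A$ meets both components of $G$, then $A$ is independent and $(A \cap V(H), B \cap V(H))$ is a $(1,\infty)$-polar partition of $H$, contradicting Theorem \ref{theo:Charact(1,infty)PO}; if $A \subseteq V(P_3)$, then $G[B]$ would contain $H$ as an induced subgraph, which is impossible since $H$ is a connected non-clique and therefore not an induced subgraph of any cluster; the case $A \subseteq V(H)$ is symmetric with $P_3$ in place of $H$. Hence $G$ is not polar. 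For minimality, if $v \in V(P_3)$ then $P_3 - v \in \{K_2, 2K_1\}$ is a cluster, and combining it on the cluster side with an explicit polar partition of $H$ (each of the four graphs is easily seen to be polar) yields a polar partition of $G - v$; if $v \in V(H)$, take a $(1,\infty)$-polar partition $(A', B')$ of $H - v$ and extend by putting the endpoints of $P_3$ in $A'$ and the middle vertex in $B'$. Self-complementarity of polarity then handles the case $\overline{G} = P_3 + H$.

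For the backward direction, let $G$ be a cograph minimal polar obstruction; since polarity is self-complementary, I may assume $G$ is disconnected, $G = G_1 + \cdots + G_t$. The structural observation yields the dichotomy that $G$ is polar if and only if $G$ is $(1,\infty)$-polar, or $G$ has at most one non-clique component and that component is polar. Since $G$ is not polar, $G$ is not $(1,\infty)$-polar, so some component $G_1$ fails to be $(1,\infty)$-polar. Applying the dichotomy to the polar graph $G - v$ for $v \in V(G_1)$ shows, in both cases, that $G_1 - v$ is $(1,\infty)$-polar, so $G_1$ is a minimal $(1,\infty)$-polar obstruction, hence one of the four connected and polar graphs in Figure \ref{fig:CM(1,infty)PO}.

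To identify the remaining components, note that since $G_1$ is not $(1,\infty)$-polar, Case B of the dichotomy is unavailable for any $G - v$ with $v \notin V(G_1)$; hence Case A applies and every component of $G - v$ other than $G_1$ must be a clique. Varying $v$ over each $G_j$ with $j \neq 1$ then forces $t = 2$ and $G_2$ non-clique (otherwise $G_1$ would be the only non-clique component and $G$ would be polar via Case A, since $G_1$ is polar), together with the further condition that $G_2 - v$ is a cluster for every $v \in V(G_2)$. A short case analysis on the join decomposition $G_2 = X \oplus Y$ of the connected cograph $G_2$ then shows that the only such graph is $G_2 \cong K_1 \oplus 2K_1 = P_3$, completing the proof. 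The main obstacle I anticipate is this backward direction, where one must carefully coordinate the two cases of the dichotomy with the minimality of $G$ to rule out $t \geq 3$ and to pin down $G_2$ as $P_3$; the rest of the argument reduces to routine verifications on the four graphs in Figure \ref{fig:CM(1,infty)PO}.
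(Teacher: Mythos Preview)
The paper does not prove Theorem~\ref{theo:CharactPolarCographs}; it is quoted without proof as a result of Ekim, Mahadev and de~Werra \cite{ekimDAM156a}, so there is no in-paper argument to compare against.

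Your argument is correct and self-contained. One presentational point: when you apply the dichotomy to $G-v$ for $v\in V(G_1)$ and assert that ``in both cases'' $G_1-v$ is $(1,\infty)$-polar, the case ``$G-v$ has at most one non-clique component'' requires knowing that not all of $G_2,\dots,G_t$ are cliques---otherwise a non-clique component of $G_1-v$ could be the unique non-clique component of $G-v$, and you would only conclude that $G_1-v$ is polar, not $(1,\infty)$-polar. The missing fact is that if $G_2,\dots,G_t$ were all cliques then $G$ itself would be polar (via your Case~A, since $G_1$ is polar as a proper induced subgraph of the minimal obstruction $G$). You do supply exactly this observation, but only later, in the parenthetical justifying $t=2$; logically it should precede the analysis of $G_1-v$. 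With that reordering the proof is complete. The final step, from ``$G_2-v$ is a cluster for every $v$'' to $G_2\cong P_3$, is indeed immediate: any induced $P_3$ in the connected non-clique cograph $G_2$ must then contain every vertex, forcing $|V(G_2)|=3$.
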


\begin{theorem}[Contreras-Mendoza \&
Hern\'andez-Cruz, \cite{contrerasDAM}] 
\label{theo:Charact(1,k)PO}
Let $s$ be an integer, $s \ge 2$.
\begin{enumerate}
	\item The graph $G$ is a connected cograph
		minimal $(1, s)$-polar obstruction if and
		only if $G$ is either a cograph minimal
		$(1, \infty)$-polar obstruction or it is
		isomorphic to $K_{s+1, s+1}, \overline{K_2}
		\oplus (K_2 + sK_1)$, or $K_1 \oplus (2K_2
		+ (s-1)K_1)$.

	\item The graph $G$ is a disconnected cograph
		minimal $(1, s)$-polar obstruction if and
		only there exists a positive integer $t$ and
		non-negative integers $s_0, s_1, \dots, s_t$
		such that $G = G_0 + \dots + G_t$, where $G_i$
		is a connected cograph minimal $(1, s_i)$-polar
		obstruction that is not a cograph minimal $(1,
		\infty)$-polar obstruction, and $s = t +
		\sum_{i=0}^t s_i$.
\end{enumerate}
\end{theorem}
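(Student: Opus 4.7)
The plan is to prove each part by its two directions.

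For Part 1, the ``if'' direction amounts to verifying that each listed graph is a minimal $(1,s)$-polar obstruction. The four graphs of Figure \ref{fig:CM(1,infty)PO} are $(1,\infty)$-polar obstructions and hence $(1,s)$-polar obstructions; minimality requires each proper induced subgraph to be $(1,s)$-polar, which is a finite cotree-level check. For the three new graphs $K_{s+1,s+1}$, $\overline{K_2}\oplus(K_2+sK_1)$, and $K_1\oplus(2K_2+(s-1)K_1)$, I exhibit an explicit $(1,s+1)$-polar partition of each and show that no $(1,\infty)$-polar partition uses fewer than $s+1$ cliques. The key observation is that in a join $G_1 \oplus G_2$ any independent set is entirely contained in one of $V(G_1), V(G_2)$; hence in each of the three graphs at least one factor of the outer join must lie entirely in $B$, and counting its components yields at least $s+1$ cliques. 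Minimality is then verified by an explicit vertex-removal argument for each graph.

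For the ``only if'' direction of Part 1, let $G$ be a connected cograph minimal $(1,s)$-polar obstruction. If $G$ is itself a $(1,\infty)$-polar obstruction, then $G$ contains some minimal such obstruction $H$ (one of the four in Figure \ref{fig:CM(1,infty)PO}, by Theorem \ref{theo:Charact(1,infty)PO}) as an induced subgraph; since $H$ is not $(1,s)$-polar, minimality of $G$ forces $G = H$. Otherwise $G$ admits a $(1,\infty)$-polar partition $(A,B)$ with $B$ having at least $s+1$ cliques, and since $G$ is a connected cograph, its cotree has a join root: $G = G_1 \oplus G_2$. By the observation above, $A \subseteq V(G_i)$ for some $i \in \{1,2\}$; then $B$ contains $V(G_{3-i})$ joined to $V(G_i) \setminus A$, which heavily constrains the cluster structure of $B$. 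Combining this rigidity with the minimality of $G$ (every proper induced subgraph is $(1,s)$-polar) pins down the number of vertices and the isomorphism type of each factor, and a case analysis driven by whether $A = V(G_i)$ or is a proper subset, together with the cotree structure of each factor, produces exactly the three listed graphs.

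For Part 2, the ``if'' direction is a clean counting argument. Given $G = G_0 + \cdots + G_t$ with each $G_i$ a connected minimal $(1,s_i)$-polar obstruction that is not a $(1,\infty)$-polar obstruction and $s = t + \sum s_i$, each $G_i$ is $(1,\infty)$-polar but not $(1,s_i)$-polar, so every $(1,\infty)$-polar partition of $G_i$ uses at least $s_i+1$ cliques. Summing yields at least $\sum(s_i+1) = s+1$ cliques in any $(1,\infty)$-polar partition of $G$, so $G$ is not $(1,s)$-polar. For minimality, delete $v \in V(G_i)$: since $G_i$ is a minimal $(1,s_i)$-polar obstruction, $G_i - v$ is $(1,s_i)$-polar, while each $G_j$ with $j \ne i$ is $(1,s_j+1)$-polar, giving a $(1,s)$-polar partition of $G - v$ because $s_i + \sum_{j \ne i}(s_j+1) = s$. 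For the ``only if'' direction, let $G$ be a disconnected minimal $(1,s)$-polar obstruction with components $G_0, \dots, G_t$, $t \ge 1$. Each $G_i$ must be $(1,\infty)$-polar, since otherwise a $(1,\infty)$-polar obstruction inside $G_i$ would survive the deletion of any vertex from a different component, contradicting the minimality of $G$. Let $s_i$ be the least integer for which $G_i$ is $(1,s_i+1)$-polar; then $G_i$ is a $(1,s_i)$-polar obstruction, and the minimality of $G$ together with additivity of minimum clique counts across disjoint components forces $G_i$ itself to be a minimal $(1,s_i)$-polar obstruction and $\sum(s_i+1) = s+1$, i.e.\ $s = t + \sum s_i$.

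The main obstacle will be the structural case analysis in the ``only if'' direction of Part 1, where the join decomposition of the cotree and the constraint on the clique count of $B$ must be tight enough to enumerate exactly the three isomorphism types; the remaining arguments are routine counting and component-removal manipulations.
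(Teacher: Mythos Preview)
This theorem is not proved in the present paper: it is quoted from \cite{contrerasDAM} as a preliminary result (Theorem~\ref{theo:Charact(1,k)PO}) and is used without proof. There is therefore no proof in this paper to compare your proposal against.

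That said, your outline is broadly on the right track as an independent argument. The counting arguments for Part~2 are sound, and your observation that in a join $G_1\oplus G_2$ every independent set lies in one side is the correct structural lever for Part~1. The genuine gap is in the ``only if'' direction of Part~1: you promise that a case analysis on the join decomposition ``pins down'' exactly the three graphs $K_{s+1,s+1}$, $\overline{K_2}\oplus(K_2+sK_1)$, and $K_1\oplus(2K_2+(s-1)K_1)$, but you do not carry it out, and this is where all the work lies. In particular, you will need to control not just the top-level join but the full cotree, and to argue carefully why minimality rules out, e.g., deeper nestings or additional parts; this is not automatic from the single observation you state. A minor point in the ``if'' direction: minimality of the four $(1,\infty)$-obstructions as $(1,s)$-obstructions requires that every proper induced subgraph be $(1,s)$-polar, not merely $(1,\infty)$-polar, so your ``finite cotree-level check'' must actually be performed for the given $s\ge 2$ (it does go through, since these graphs have at most six vertices).
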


Note that if $G$ is a cluster such that, either
$G$ has at most $k+1$ components, or $G$ has at
most $k$ non-trivial components, then $G$ is an
$(\infty,k)$-polar graph.   Hence, every cograph
minimal $(\infty,k)$-polar obstruction that is a
cluster, has at least $k+2$ components and at
least $k+1$ of them are non-trivial. In consequence,
we have the following useful observation.

\begin{remark} 
\label{rem:OnlyCluster(Infty,k)PO}
Let $k$ be an integer.   Up to isomorphism, the
graph $K_1 + (k+1)K_2$ is the only cograph minimal
$(\infty,k)$-polar obstruction that is a cluster.
\end{remark}

The following lemma is a slight modification of
Lemma 1 in \cite{hellDAM}; the proof is very
similar, and thus will be omitted.

\begin{lemma} 
\label{lem:BasicsOf(Infty,k)PO}
Let $k$ be a non-negative integer, and let $G$ be a cograph
minimal $(\infty, k)$-polar obstruction.   Then
\begin{enumerate}
	\item \label{it:AtMostk+2Components} $G$ has at most
		$k+2$ connected components,
	\item \label{it:AtLestOneNonTrivialComponent} $G$ has
		at least one non-trivial component,
	\item \label{it:AtMostk+TrivialComponent} $G$ has at
		most $k+1$ trivial components,
	\item \label{it:AtMostOneNoncompleteComponent} if $G$
		has at least one trivial component, then $G$ has at
		most one non-complete component,
	\item \label{it:OrderCompleteComponents} every complete
		component of $G$ has order one or two.
\end{enumerate}
\end{lemma}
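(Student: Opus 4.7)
The plan is to prove all five conclusions by contradiction. For items (1), (3), (4) and (5), I would fix a convenient vertex $v$, use minimality of $G$ to obtain an $(\infty,k)$-polar partition $(A,B)$ of $G-v$, and then show that $(A,B)$ can be extended to an $(\infty,k)$-polar partition of $G$, contradicting that $G$ is an obstruction. Item (2) is the only one that does not fit this template and is immediate: $nK_1$ is trivially $(\infty,k)$-polar (put everything in $A$ as one independent part), so a minimal obstruction must contain an edge and hence a non-trivial component.

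The structural fact I would use repeatedly is that a complete multipartite graph with at least two parts is connected, so whenever $A$ has $\ge 2$ parts it is confined to a single connected component of $G-v$. For item (3), taking $v=v_1$ to be one of the $k+2$ alleged isolated vertices, at most $k$ of $v_2,\dots,v_{k+2}$ can lie in $B$ (each would be a singleton clique-component), so some $v_j$ lies in $A$; having no neighbors and sitting in a complete multipartite graph forces $A$ to be a single independent part, whence $(A\cup\{v_1\},B)$ partitions $G$. For item (4), taking $v$ to be the isolated vertex, any non-clique component of $G$ must meet $A$ (a component contained in $B$ would be a connected subgraph of a cluster, hence a clique), so two such components would force $A$ to meet two different components of $G-v$; connectedness again forces $A$ to be a single independent part, and $(A\cup\{v\},B)$ works. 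For item (5), if $C$ is a clique component of order $m\ge 3$ and $u\in C$, I would split on the location of $C-u$ in $(A,B)$: entirely in $B$ (then $u$ joins the clique $C-u$); entirely in $A$ (then $C-u$ is a clique of size $\ge 2$, forcing $A$ to have $\ge 2$ parts, so $A$ is connected and equals $C-u$, and $A\cup\{u\}=C$ is itself complete multipartite); or split across both (then $Y=(C-u)\cap B$ is an entire clique-component of $B$ by a short connectedness argument, to which $u$ can be appended).

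Item (1) is the most delicate. By Remark~\ref{rem:OnlyCluster(Infty,k)PO}, if $G$ has $c\ge k+3$ components it cannot itself be a cluster, so some component $C$ is not a clique; I take $v\in C$ and analyze $(A,B)$ of $G-v$. The cases $A=\emptyset$ (forces $G-v$ to have $\le k$ components) and $A$ with $\ge 2$ parts (pins $A$ to one component, leaving $\ge k+1$ other clique-components for $B$) are immediate contradictions. The hard case is when $A$ is a non-empty independent set, because then the connectedness trick fails and $A$ may be distributed across many components of $G-v$. I would resolve it by classifying each component of $G-v$ as trivial-in-$A$, a clique-in-$B$, or a proper split (in which case it contributes at least one clique to $B$), so that the bound $\le k$ on clique-components of $B$ forces at least $c-1-k\ge 2$ trivial-in-$A$ components, leaving enough room to rearrange and to place $v$ into an independent part of a polar partition of $G$. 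This independent-set case is the principal obstacle, and I would handle it following the template of Lemma~1 of \cite{hellDAM}.
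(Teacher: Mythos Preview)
Your proposal is correct and takes essentially the same approach as the paper: the paper omits the proof entirely, stating only that it is a slight modification of Lemma~1 in \cite{hellDAM}, and your sketch follows precisely that template (you even say so explicitly for the delicate independent-set subcase of item~1). Your case analyses for items~(2)--(5) are sound, and your identification of the single-part case in item~(1) as the principal obstacle, to be handled via the counting argument from \cite{hellDAM}, matches what the paper intends.
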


We finish this section by characterizing connected
cograph minimal $(\infty,k)$-polar obstructions
for every integer $k$ such that $k \ge 2$.
The $(\infty, 0)$-polar cographs are precisely
the complete multipartite graphs, and it is well
known that the only cograph minimal
$(\infty,0)$-polar obstruction is $\overline{P_3}$.
Furthermore, from Theorem \ref{theo:Charact(1,infty)PO},
all the cograph minimal $(\infty,1)$-polar obstructions
are disconnected.   Thus, for $k \le 1$, there exist no
connected cograph minimal $(\infty, k)$-polar
obstructions.   In contrast, for $k \ge 2$ there
such minimal connected obstructions exist, and there
is a fixed number of them as we show below.

\begin{theorem} 
\label{theo:ConnectedObsmin}
Let $k$ be an integer, $k \ge 2$, and let $G$ be a
disconnected graph.   Then, $G$ is a cograph minimal
$(k, \infty)$-polar obstruction if and only if $G$ is a
cograph minimal polar obstruction.

Equivalently, $G$ is a connected cograph minimal
$(\infty, k)$-polar obstruction if and only if $G$ is a
connected cograph minimal polar obstruction.
\end{theorem}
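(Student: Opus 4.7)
The plan is to prove each direction of the equivalent connected, $(\infty,k)$-polar formulation by exploiting the cograph join decomposition $G = G_1 \oplus G_2$ of a connected cograph. For the forward direction, suppose $G$ is a connected cograph minimal $(\infty,k)$-polar obstruction. Minimality of $G$ as a polar obstruction is immediate, since every proper induced subgraph of $G$ is $(\infty,k)$-polar and therefore polar; the real task is to show $G$ itself is not polar. Assume for contradiction that $(A,B)$ is a polar partition of $G$. A short argument, using that every vertex of $G_1$ is adjacent to every vertex of $G_2$, forces one of three sub-cases: $B = \emptyset$, $B$ is a single clique (spanning both sides of the join), or $B \subseteq V(G_i)$ for some $i$. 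The first two already yield an $(\infty,1)$-polar partition of $G$, contradicting the obstruction hypothesis (as $k \ge 2$). In the remaining sub-case, say $B \subseteq V(G_1)$, so $V(G_2) \subseteq A$, forcing $G_2$ to be complete multipartite as an induced subgraph of the complete multipartite graph $A$. Then $G_1 \subsetneq G$ is a proper induced subgraph, hence $(\infty,k)$-polar by minimality, and any $(\infty,k)$-polar partition $(A_1', B_1')$ of $G_1$ extends to the $(\infty,k)$-polar partition $(A_1' \cup V(G_2), B_1')$ of $G$, a final contradiction.

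For the reverse direction, let $G$ be a connected cograph minimal polar obstruction. Since $G$ is not polar it is certainly not $(\infty,k)$-polar, so the work lies in showing every proper induced subgraph of $G$ is $(\infty,k)$-polar. As $G$ is connected, Theorem \ref{theo:CharactPolarCographs} gives $G = (K_1 + K_2) \oplus F$, where $F = \overline{H'}$ for one of the four minimal cograph $(1,\infty)$-polar obstructions $H'$ of Figure \ref{fig:CM(1,infty)PO}. The auxiliary lemma---verified by direct inspection of these four small explicit graphs---is that each such $F$ is $(\infty,2)$-polar, while every proper induced subgraph of $F$ is $(\infty,1)$-polar.

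Granted this lemma, take any proper induced subgraph $H \subsetneq G$ and write $H = H_1 \oplus H_2$ with $H_1 \subseteq V(K_1 + K_2)$ and $H_2 \subseteq V(F)$. If $H_1 = K_1 + K_2$ then $H_2 \subsetneq F$, and I would extend an $(\infty,1)$-polar partition $(A_2, B_2)$ of $H_2$ to the partition $(\{q,r\} \cup A_2, \{p\} \cup B_2)$ of $H$, where $p$ is the isolated vertex and $qr$ the edge of $K_1 + K_2$; the join structure turns the $A$-side into a complete multipartite graph and the $B$-side into a single clique. Otherwise $H_1 \subsetneq K_1 + K_2$, so $H_1 \in \{\emptyset, K_1, 2K_1, K_2\}$ is itself complete multipartite, and either an $(\infty,2)$-polar partition of $F$ (when $H_2 = F$) or an $(\infty,1)$-polar partition of $H_2$ (when $H_2 \subsetneq F$) extends by adjoining $H_1$ to the $A$-side, making $H$ at worst $(\infty,2)$-polar. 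In every case $H$ is $(\infty,2)$-polar, and hence $(\infty,k)$-polar since $k \ge 2$.

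The main obstacle is the finite but somewhat tedious verification of the auxiliary lemma for the four explicit graphs $F$; once that is in hand, the remainder follows by a uniform application of the cograph join decomposition.
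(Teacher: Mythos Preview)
Your proof is correct, but the route differs substantially from the paper's.

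The paper argues in the dual, disconnected $(k,\infty)$ setting. For the forward direction it never analyses a hypothetical polar partition of $G$. Instead it shows directly that a disconnected cograph minimal $(k,\infty)$-polar obstruction $G$ must \emph{equal} some $P_3 + H'$ with $H'$ a minimal $(1,\infty)$-polar obstruction: first, minimality forces every component of $G$ to be non-complete; second, since $G$ is not $(1,\infty)$-polar it contains some minimal $(1,\infty)$-polar obstruction $H'$, which by Theorem~\ref{theo:Charact(1,infty)PO} is connected and hence lies inside a single component; third, any other component, being non-complete, contributes a $P_3$, so $P_3 + H' \le G$, and minimality gives equality. The converse is dismissed as immediate from Theorems~\ref{theo:Charact(1,infty)PO} and~\ref{theo:CharactPolarCographs}.

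Your forward direction is arguably cleaner in one respect: the join-based case analysis never invokes the explicit list of $(1,\infty)$-polar obstructions, only the minimality of $G$ and the fact that a join of complete multipartite graphs is complete multipartite. The paper's argument, by contrast, needs the (nontrivial) fact that all minimal $(1,\infty)$-polar obstructions are connected. On the other hand, the paper's approach yields the explicit structure $G = P_3 + H'$ for free, which is useful later. For your reverse direction, note that half of your auxiliary lemma is automatic: since $H'$ is a minimal $(1,\infty)$-polar obstruction, $F=\overline{H'}$ is a minimal $(\infty,1)$-polar obstruction, so every proper induced subgraph of $F$ is $(\infty,1)$-polar by definition; only the $(\infty,2)$-polarity of the four $F$'s needs checking.
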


\begin{proof}
Suppose that $G$ is a disconnected cograph minimal
$(k, \infty)$-polar obstruction.   Note that, by the
minimality of $G$, if $K$ is a complete component of
$G$, then $G-K$ is a $(k,\infty)$-polar graph, and thus
also is $G$, which is absurd. Hence, every component
of $G$ is non-complete.   Moreover, since $G$ is
not a $(1, \infty)$-polar graph, then $G$ contains a
cograph minimal $(1, \infty)$-polar obstruction $G'$
as an induced subgraph.   From Theorem
\ref{theo:Charact(1,infty)PO}, $G'$ is connected, so it
is completely contained in a single component of
$G$.   Thus, since $G$ has no complete components, $G$
contains $G' + P_3$ as an induced subgraph, but by
Theorem \ref{theo:CharactPolarCographs}, $G' + P_3$ is
a not $(k, \infty)$-polar, so $G = G' + P_3$,
which proves that $G$ is a cograph minimal polar
obstruction.   The converse implication follows easily
from Theorems \ref{theo:Charact(1,infty)PO} and
\ref{theo:CharactPolarCographs}.   The second statement
is an immediate consequence, because a graph $G$ is a
connected cograph minimal $(s, k)$-polar obstruction if
and only if its complement, $\overline G$, is a
disconnected cograph minimal $(k, s)$-polar
obstruction.
\end{proof}

\section{Type $(c,0)$ obstructions}
\label{sec:type(c,0)}

Since we have already characterized the connected
cograph minimal $(\infty,k)$-polar obstructions,
we are now only concerned within the disconnected
obstructions. Recall that the cograph minimal
obstructions of type $(c,0)$ are those without
isolated vertices. We begin our study of this
type noticing some restrictions on their connected
components.

\begin{lemma} 
\label{lem:Geq2NonCompleteComponents}
Let $k$ be an integer, $k \ge 2$, and let $G$ be a
disconnected cograph minimal $(\infty, k)$-polar
obstruction without isolated vertices.   Then, $G$
has at least two non-complete components.
\end{lemma}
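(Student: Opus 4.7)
The plan is to proceed by contradiction, assuming $G$ has at most one non-complete component.  If $G$ has no non-complete component, then by Lemma~\ref{lem:BasicsOf(Infty,k)PO}(\ref{it:OrderCompleteComponents}) and the no-isolated-vertex hypothesis every component of $G$ is $K_2$, so $G = mK_2$; but by Remark~\ref{rem:OnlyCluster(Infty,k)PO} the unique cluster cograph minimal $(\infty,k)$-polar obstruction has an isolated vertex, so $G$ cannot be such an obstruction.  In the remaining case, write $G = H + jK_2$ with $H$ a connected non-complete cograph and $j \ge 1$.

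The first step is to prove the polarity criterion: for any connected non-complete cograph $X$ and integer $i \ge 0$, the graph $X + iK_2$ is $(\infty,k)$-polar if and only if $X$ is $(\infty, k-i)$-polar.  The forward direction is a case analysis on the location of each $K_2$ of $iK_2$ in a putative polar partition $(A,B)$: no $K_2$ can lie entirely in $A$, because then $A$ would reduce to that single $K_2$ (any further vertex in $A$ would be non-adjacent to both endpoints, forcing an impossible part structure) and $X \subseteq B$, contradicting that the connected non-complete cograph $X$ is not a cluster; every other placement restricts to an $(\infty, k-i)$-polar partition of $X$.  Applied to $G$, the criterion together with the minimality of $G$ shows that $H$ is $(\infty, k-j+1)$-polar (from $G - K_2$ being polar) but not $(\infty, k-j)$-polar, so $H$ is itself polar and contains some cograph minimal $(\infty,k-j)$-polar obstruction $M$ as an induced subgraph.

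Next I would claim that $|V(H)| > |V(M)|$.  If $M$ is disconnected this is immediate from $H$ being connected.  If $M$ is connected then $k - j \ge 2$ (for $k-j \in \{0,1\}$ all such minimal obstructions are disconnected, as discussed before Theorem~\ref{theo:ConnectedObsmin}), and Theorem~\ref{theo:ConnectedObsmin} identifies $M$ as a connected cograph minimal polar obstruction, hence not polar; yet $H$ is polar, so $M \ne H$.  Picking any $z \in V(H) \setminus V(M)$, we get that $G - z = (H - z) + jK_2$ contains $M + jK_2$ as an induced subgraph.  To conclude I would verify that $M + jK_2$ is not $(\infty,k)$-polar: if $M$ is a cluster, then $M = K_1 + (k-j+1)K_2$ by Remark~\ref{rem:OnlyCluster(Infty,k)PO}, whence $M + jK_2 = K_1 + (k+1)K_2$ is again a minimal obstruction by the same remark; otherwise the same $K_2$-placement case analysis as in the polarity criterion shows that a polar partition of $M + jK_2$ would restrict to an $(\infty, k-j)$-polar partition of $M$, contradicting $M$'s obstruction status.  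Hence $G - z$ is not $(\infty,k)$-polar, contradicting the minimality of $G$.

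The main technical obstacle is the $K_2$-placement case analysis, used both in the polarity criterion for $H + jK_2$ and to rule out polarity of $M + jK_2$; the key subtlety each time is ruling out a whole $K_2$ sitting in the multipartite side, which always forces the remaining non-cluster factor (either $X$ or $M$) into the cluster side.
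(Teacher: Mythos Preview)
Your proof is correct and follows the same overall strategy as the paper: write $G = jK_2 + H$ with $H$ connected non-complete, deduce that $H$ is not $(\infty,k-j)$-polar, locate a cograph minimal $(\infty,k-j)$-polar obstruction $M$ inside $H$, argue that $M$ is a proper induced subgraph, and contradict the minimality of $G$.

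The differences are in packaging rather than substance. Instead of isolating your polarity criterion for $X + iK_2$, the paper proves the weaker tailored statement that for every $v \in V_H$ the graph $H-v$ is either a cluster or $(\infty,k-j)$-polar; this is exactly your $K_2$-placement analysis applied to $G-v$. In the connected-$M$ case, the paper is slightly more direct: by Theorem~\ref{theo:ConnectedObsmin} a connected minimal $(\infty,k-j)$-polar obstruction is already a minimal $(\infty,k)$-polar obstruction, so its presence as a proper induced subgraph of $G$ is an immediate contradiction, with no need to pass through polarity of $H$ or to select $z$. In the disconnected-$M$ case the paper, using its intermediate statement, concludes that $H-z$ is a cluster and hence so is $M$, arriving at $M = K_1 + (k-j+1)K_2$ and the containment of $K_1+(k+1)K_2$ in $G$; your route via $M + jK_2$ reaches the same endpoint but treats the cluster and non-cluster cases for $M$ uniformly. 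Your formulation of the polarity criterion is arguably cleaner and reusable, at the cost of doing the $K_2$-placement analysis in full generality.
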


\begin{proof}
Let $G$ be as in the hypothesis.   From Remark
\ref{rem:OnlyCluster(Infty,k)PO}, we have that $G$ is
not a cluster, so $G$ has at least one non-complete
component.   Suppose for a contradiction that $G$ has
precisely one non-complete component.   Then, by
Lemma \ref{lem:BasicsOf(Infty,k)PO}, for some integer
$j\in \{1, \dots ,k+1\}$, $G \cong jK_2 + H$, where $H$
is a connected non-complete graph.   Note that since $G$
is not an $(\infty, k)$-polar graph, $H$ is not an
$(\infty, k-j)$-polar graph.

Let $v$ be a vertex of $H$, and suppose that $H-v$ is
not a cluster.   Thus, for every $(\infty, k)$-polar
partition $(A, B)$ of $G-v$,
$A \cap V_{H-v} \neq \varnothing$, which implies that
$(A \cap V_{H-v}, B \cap V_{H-v})$ is an
$(\infty, k-j)$-polar partition of $H-v$.   Hence, for
each vertex $v$ of $H$, $H-v$ is either a cluster or an
$(\infty, k-j)$-polar graph.

Since $H$ is not an $(\infty, k-j)$-polar graph, $H$
contains a cograph minimal $(\infty, k-j)$-polar
obstruction $H'$ as an induced subgraph.   Nevertheless,
by Theorem	\ref{theo:ConnectedObsmin}, if $H'$ is
connected, then it is a cograph minimal
$(\infty, k)$-polar obstruction, in contradiction with
the minimality of $G$.   Thus $H'$ is a disconnected
induced subgraph of the connected cograph $H$.   Let $v$
be a vertex of $H - H'$.   Since $H'$ is an induced
subgraph of $H-v$, we have that $H-v$ is not an $(\infty,
k-j)$-polar graph, which implies that $H-v$ and $H'$
are clusters. However, from Remark
\ref{rem:OnlyCluster(Infty,k)PO}, $H'$ is isomorphic
to $K_1 + (k-j+1)K_2$, but in such a case $G$ properly
contains $K_1 + (k+1)K_2$ as an induced subgraph,
contradicting its minimality. The contradiction arose
from supposing that $G$ has no more than one
non-complete component, so  $G$ must have at least two
non-complete components.
\end{proof}

The following lemma characterizes a family of graphs with
some properties that are common to all cograph minimal
$(\infty, k)$-polar obstructions without isolated vertices.
It will be very useful for giving recursive constructions
of such obstructions.

\begin{lemma} 
\label{lem:CharacterizationOfAitchs}
Let $k$ be a positive integer, and let $H$ be a cograph.
Then, $H$ is such that
\begin{enumerate}
	\item \label{it:HisNotAcluster} $H$ is not a cluster,
	\item \label{it:1,k-1Obstruction} $H$ is
		$(1, k)$-polar, but not $(1, k-1)$-polar, and
	\item \label{it:ForEachVertexVofH} for each vertex $v$
		of $H$, the graph $H-v$ is either a $(1, k-1)$-polar
		graph or a cluster,
\end{enumerate}
if and only if exactly one of the following statements
is satisfied:
\begin{enumerate}[label={\alph*.}]
	\item \label{it:1,k-1MinimalObstruction} $H$ is a
		cograph minimal $(1, k-1)$-polar
		obstruction, that is neither a cograph minimal
		$(1, \infty)$-polar obstruction nor isomorphic to
		$kK_2$.
	\item \label{it:P3+(k-1)K2} $H \cong P_3 + (k-1)K_2$.
	\item \label{it:(k-2)K2+(K1Oplus2K2)} $k \ge 2$, and
		$H \cong (k-2)K_2 + (K_1 \oplus 2K_2)$.
\end{enumerate}
\end{lemma}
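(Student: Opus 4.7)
The plan is to prove the ``only if'' direction by distinguishing whether $H$ is itself a cograph minimal $(1,k-1)$-polar obstruction, and to verify each of a, b, c satisfies 1--3 separately. Two preliminary facts I will use throughout: (i) $H$ has no isolated vertices, since an isolated $e$ together with condition 3 would force $H-e$ to be $(1,k-1)$-polar (making $H$ also $(1,k-1)$-polar by appending $e$ to the independent part, contradicting condition 2) or a cluster (making $H$ itself a cluster, contradicting condition 1); (ii) $kK_{2}$ is the unique cluster cograph minimal $(1,k-1)$-polar obstruction, because a cluster $nK_{2}$ is $(1,k-1)$-polar iff $n\leq k-1$, so $kK_{2}$ fails minimally.

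If $H$ is itself a cograph minimal $(1,k-1)$-polar obstruction, case a follows by elimination: condition 1 excludes $kK_{2}$ and condition 2 excludes every cograph minimal $(1,\infty)$-polar obstruction (such an $H$ is not $(1,\infty)$-polar, hence not $(1,k)$-polar). For the converse, condition 3 is automatic from minimality and condition 1 is clear. For condition 2, I would check $(1,k)$-polarity of each type in Theorem \ref{theo:Charact(1,k)PO}: every connected minimal $(1,s)$-polar obstruction in the list admits an explicit $(1,s+1)$-polar partition, and any disconnected minimal $(1,k-1)$-polar obstruction $G_{0}+\cdots+G_{t}$ combines these to give exactly $\sum(s_{i}+1)=k$ cliques in $B$.

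Now assume $H$ is not a minimal $(1,k-1)$-polar obstruction. Since $H$ is not $(1,k-1)$-polar some proper induced subgraph also is not, so there exists $v$ with $H-v$ not $(1,k-1)$-polar; by condition 3, $H-v$ is a cluster, and it has at least $k$ non-trivial components. Let $D_{v}$ be the component of $H$ containing $v$ and write $D_{v}-v=F_{1}+\cdots+F_{l}$, each $F_{i}$ a clique. The key cograph observation is that if $v$ is partially adjacent to some non-trivial $F_{i}$ (with $u\in N(v)\cap F_{i}$ and $w\in F_{i}\setminus N(v)$) and also has a neighbour $x$ outside $F_{i}$, then $x,v,u,w$ induces a $P_{4}$. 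Thus either $(\alpha)$ $l=1$ with $v$ partially or fully joined to $F_{1}$, or $(\beta)$ $l\geq 2$ with $v$ fully joined to every $F_{i}$; full joining in $(\alpha)$ makes $D_{v}$ a clique and $H$ a cluster, contradicting condition 1.

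In $(\alpha)$ with $v$ partial, applying condition 3 successively to a non-neighbour of $v$ in $F_{1}$ and then to a neighbour of $v$ in $F_{1}$ forces $|F_{1}|=2$ with $|N(v)\cap F_{1}|=1$, so $D_{v}\cong P_{3}$; analogous application to any vertex of another component $E_{j}$ of $H$ forces $|E_{j}|=2$; and counting cliques of $B$ in any $(1,k)$-polar partition fixes the number of $E_{j}$ at $k-1$, yielding $H\cong P_{3}+(k-1)K_{2}$, case b. In $(\beta)$, one shows that every $(1,k)$-polar partition of $H$ has $v\in A$ and every $F_{i}$ non-trivial (the alternative $v\in B$ forces a trivial $F_{i}$, whose removal produces a graph neither $(1,k-1)$-polar nor a cluster), giving $l+q=k$ where $q$ is the number of other components. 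Condition 3 on any vertex of $F_{i}$ or $E_{j}$ forces $|F_{i}|=|E_{j}|=2$, and on a vertex of any $K_{2}$ inside $D_{v}$ forces $l=2$, yielding $H\cong(K_{1}\oplus 2K_{2})+(k-2)K_{2}$, case c. For the converse of b and c, direct exhibition of $(1,k)$-polar partitions and case-by-case checking of every vertex removal suffices. The main technical hurdle I expect is the exclusion of $l\geq 3$ in $(\beta)$: removing a $K_{2}$-vertex from the joined cluster leaves $(K_{1}\oplus(K_{1}+(l-1)K_{2}))+(k-l)K_{2}$, and I need to argue that no $(1,k-1)$-polar partition exists, which requires tracking where the central vertex of the join must lie in any hypothetical partition and exploiting the lack of edges between distinct $K_{2}$ components.
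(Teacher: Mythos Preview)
Your overall plan is sound and follows essentially the same architecture as the paper's proof: treat the case where $H$ is itself a minimal $(1,k-1)$-polar obstruction separately, and otherwise pick a vertex $v$ with $H-v$ a cluster and split according to the structure of the component $D_v$ containing $v$. Your cases $(\alpha)$ and $(\beta)$ coincide exactly with the paper's cases $d_P(v)=1$ and $d_P(v)=2$.

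There is one genuine gap in your case $(\beta)$. You argue: if $v\in B$ in a $(1,k)$-polar partition then some $F_{i_0}$ is trivial (this step is correct), and removing that single vertex $w$ yields a graph that is neither $(1,k-1)$-polar nor a cluster. The second claim fails when $l=2$: if $F_{i_0}=\{w\}$ and the other $F_j$ is a clique, then $D_v-w=v\oplus F_j$ is a clique, so $H-w$ \emph{is} a cluster and condition 3 is not violated. The sub-cases with $l=2$ and a trivial $F_i$ are in fact excluded, but by condition 2 rather than condition 3: a short clique count shows such an $H$ cannot be $(1,k)$-polar at all (with $v\in A$ one gets $2+q$ cliques in $B$; with $v\in B$ one forces the non-trivial $F_j$ into the independent set $A$). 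So your conclusion survives, but the justification must invoke condition 2 here, not the vertex-deletion argument.

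Two places where the paper's organization is cleaner and would save you work. First, the paper proves at the outset that every complete component of $H$ has order exactly two (using condition 3 on a vertex of such a component); with this in hand your size arguments for the $E_j$ become unnecessary. Second, rather than bounding $|F_1|$, $a$, $b$ one at a time in $(\alpha)$, the paper simply observes that $H$ contains $P_3+(k-1)K_2$ as an induced subgraph and then uses condition 3 once: any extra vertex $u$ outside this copy would make $H-u$ neither $(1,k-1)$-polar nor a cluster. This ``contains the target, hence equals the target'' shortcut also streamlines $(\beta)$ and handles your acknowledged hurdle $l\ge 3$: after reducing to $H\cong (k-\ell)K_2+(v\oplus \ell K_2)$, the paper shows that for $\ell\ge 3$ removing a neighbour $w$ of $v$ leaves a component containing the connected minimal $(1,\ell-1)$-polar obstruction $K_1\oplus(2K_2+(\ell-2)K_1)$, giving the contradiction directly via Theorem~\ref{theo:Charact(1,k)PO} rather than an ad hoc partition chase.
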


\begin{proof}

Let $H$ be a cograph that satisfies items
\ref{it:HisNotAcluster} to \ref{it:ForEachVertexVofH}.
Since $H$ is a cograph, we have from item
\ref{it:1,k-1Obstruction} that $H$ contains a cograph
minimal $(1, k-1)$-polar obstruction $H'$.   Observe
that, since $H$ is not a cluster but it is
$(1,k)$-polar, if $H = H'$ then it satisfies item
{\em a.}

Let $K$ be a complete component of $H$ (if any). We
claim that $K$ has order two.   From item 3, for every
vertex $w$ of $K$, $H-w$ admits a $(1, k-1)$-polar
partition $(A, B)$.   If $K$ is a trivial graph, then
$(A \cup \{w\}, B)$ is a $(1, k-1)$-polar partition of
$H$, which is impossible.   Else, if $K$ has order at
least three, then $V_{K-w} \cap B \neq\varnothing$
(otherwise $A = V_{K-w}$ and $B$ covers $H-K$, which
cannot occur since $H-K$ is not a cluster), and then
$(A, B \cup \{w\})$ is a $(1, k-1)$-polar partition of
$H$, a contradiction.   Therefore, every complete
component of $H$ is isomorphic to $K_2$.

Suppose that $H$ properly contains a
cograph minimal $(1, k-1)$-polar obstruction as an
induced subgraph.   It implies that there exists a
vertex $v$ of $H$ such that $H-v$ is not a
$(1, k-1)$-polar graph, and from item 3, $H-v$ is a
cluster.   Note that from item 1, $H$ has a subgraph $P$
isomorphic to $P_3$, and that $v$ is necessarily a
vertex of $P$, or $H-v$ would not be a cluster.

Let $v$ and $P$ be as described above, then we have two
cases: either $d_P(v)=1$ or $d_P(v)=2$.   Suppose first
that $d_P(v) = 1$.   Since $H$ is a
cograph and $d_P(v) = 1$, $v$ is adjacent to exactly one
component of the cluster $H-v$, and therefore
$H \cong jK_2 + (K_a \oplus (v + K_b))$ for some
positive integers $a$ and $b$ and some non-negative
integer $j$.   Moreover, since $H$ is not a
$(1, k-1)$-polar graph, $j \ge k-1$, but if $j > k-1$
then $H$ contains $(k+1)K_2$ as a proper induced
subgraph, and then it is not a $(1, k)$-polar graph,
contradicting our assumptions.   Thus, $j= k-1$, and
$H \cong (k-1)K_2 + (K_a \oplus (v + K_b))$.   Observe
that $H$ contains $H' \cong P_3 + (k-1)K_2$ as an
induced subgraph, and $H'$ is neither $(1,k-1)$-polar
nor a cluster.   Therefore, by item 3,
$H \cong P_3 + (k-1)K_2$, that is, $H$ satisfies item
{\em b.}

For the second case, suppose that $d_P(v) = 2$.   Note
that since $v$ is adjacent to at least two components of
the cluster $H-v$, then $v$ is completely adjacent or
completely not-adjacent to each component of $H-v$, and
therefore, it is completely adjacent to at least two
components of $H-v$.   Let $K$ be a component of $H-v$
that is completely adjacent to $v$, and suppose for a
contradiction that $K$ has more than two vertices: if
$w$ is a vertex of $K$, then $H-w$ is not a cluster, so
it admits a $(1, k-1)$-polar partition $(A,B)$ and
therefore $(A, B \cup \{w\})$ is a $(1, k)$-polar
partition of $H$, a contradiction.   Hence, every
component of $H-v$ that is completely adjacent to $v$
has at most two vertices, and in consequence $H$ is
isomorphic to $qK_2 + (v\oplus (\ell K_2 + mK_1))$ for
some non-negative integers $\ell, m$ and $q$ such that
$\ell + m \ge 2$.

Observe that if $\ell + q \ge k+1$ then $H$ contains
$(k+1)K_2$ as an induced subgraph, and then $H$ is not a
$(1, k)$-polar cograph, contradicting our hypothesis.
Therefore, $\ell + q \le k$.   Furthermore, since $H-v$
is a cluster that is not a $(1, k-1)$-polar graph, it
contains a cograph minimal $(1, k-1)$-polar obstruction
$H'$ that is a cluster as an induced subgraph.
Nevertheless, the only cograph minimal $(1, k-1)$-polar
obstruction that is a cluster is $H' \cong kK_2$.
The above observation implies that $\ell + q \ge k$,
so we have that $\ell + q = k$.

It is straightforward to show that if $\ell \le 1$, then
$H$ has $P_3 + (k-1)K_2$ as a proper induced subgraph,
which is impossible as we have noted
when proving the case $d_P(v)=1$.   Thus,
$\ell \ge 2$.   Furthermore, note that the component of
$H$ that contains $v$ is a $(1, \ell + m)$-polar graph
that is not a $(1, \ell + m -1)$-polar graph, which
implies that $H$ is a $(1, k + m)$-polar graph that
admits no $(1, k+m-1)$-polar partitions.   However, by
hypothesis $H$ is a $(1,k)$-polar graph that is not a
$(1, k-1)$-polar graph, so we have that $m = 0$, and
then $H \cong (k-\ell)K_2 + (v \oplus \ell K_2)$.
Suppose for a contradiction that $\ell \ge 3$, and
let $w$ be a vertex of $H$ adjacent to $v$.   Since
$H-w$ is not a cluster, it is a $(1, k-1)$-polar graph,
and therefore the component of $H-w$ that contains $v$
is $(1, \ell -1)$-polar, but this is impossible
since such component is isomorphic to $K_1 \oplus
((\ell -1)K_2 + K_1)$ which contains the cograph
minimal $(1, \ell-1)$-polar obstruction $K_1 \oplus
(2K_2 + (\ell-2)K_1)$ as an induced subgraph.
Hence, $\ell =2$ and $H \cong (k-2)K_2 + (K_1
\oplus 2K_2)$, so item {\em c.} is satisfied.

To prove that the graphs described in items {\em b.} and
{\em c.} satisfy the sentences of items 1, 2, and 3 is a
simple routine work.   The analogous result for graphs
described in item {\em a.} follows from Theorem
\ref{theo:Charact(1,k)PO}.
\end{proof}

It also will be useful to know when do the graphs
described in the above lemma posses some specific
properties. The following remark identifies some
interesting cases.   The proof is straightforward
and thus omitted.

\begin{remark} 
\label{rem:(Infty,k-1)PolarityAndKClusters}
Let $k$ be an integer and, let $H$ be a cograph.
\begin{enumerate}
	\item Suppose that $H$ is a cograph minimal
		$(1, k-1)$-polar obstruction that is neither a
		cograph minimal $(1, \infty)$-polar obstruction nor
		isomorphic to $kK_2$.   Then, $H$ is an
		$(\infty, k-1)$-polar graph if and only if $H$ has
		precisely one component non-isomorphic to $K_2$.
	\item The graph $H$, with $H \cong P_3 + (k-1)K_2$ is a
		$(2, k-1)$-polar graph, and for each vertex $v$
		of $H$, $H -v$ is either a $(1, k-1)$-polar graph
		or it is isomorphic to $kK_2$.
	\item The graph $H$, with $H \cong (k-2)K_2 + (K_1
		\oplus 2K_2)$ is a $(3, k-1)$-polar graph, and for
		each vertex $v$ of $H$, $H-v$ is either a $(1,
		k-1)$-polar graph or it is isomorphic to $kK_2$.
\end{enumerate}
\end{remark}

It results convenient to divide the study of
disconnected cograph minimal $(\infty, k)$-polar
obstructions without isolated vertices into two
cases, depending on whether some component is
isomorphic to $P_3$.   We start by treating the
case in which the graphs have not components
isomorphic to $P_3$.

\begin{lemma} 
\label{lem:ObsminWhitoutP3CompNorIsolated}
Let $k$ be a non-negative integer, and let $G$ be a
graph without components isomorphic to $P_3$.   Then,
$G$ is a disconnected cograph minimal $(\infty,
k)$-polar obstruction without isolated vertices if
and only if there exist positive integers $k_1$ and
$k_2$, and cographs $H_1$ and $H_2$ such that $G =
H_1 + H_2$, and for $i\in\{1,2\}$, the following
statements are satisfied:
\begin{enumerate}
	\item $H_i$ is not a cluster,
	\item $H_i$ is a $(1, k_i)$-polar graph that admits no
		$(1, k_i-1)$-polar partitions,
	\item for each vertex $v$ of $H_i$, the graph $H_i -v$
		is either a $(1, k_i-1)$-polar graph or a cluster,
	\item for $j\in\{1,2\}$ such that $j \neq i$, if $H_i$
		is not a cograph minimal $(1, k_i-1)$-polar
		obstruction, then $H_j$ is an
		$(\infty ,k_j -1)$-polar graph, and
	\item $k = k_1 + k_2 -1$.
\end{enumerate}
\end{lemma}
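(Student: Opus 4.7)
The plan is to prove the two implications separately, leveraging the structural characterization in Lemma~\ref{lem:CharacterizationOfAitchs} of cographs satisfying properties 1--3.

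For the reverse direction, I will assume that cographs $H_1, H_2$ together with positive integers $k_1, k_2$ satisfying conditions 1--5 are given, and set $G = H_1 + H_2$. First I would observe that each $H_i$ falls into case (a), (b), or (c) of Lemma~\ref{lem:CharacterizationOfAitchs} by conditions 1--3, and that none of these produces isolated vertices; case (b) is ruled out because it would contribute a $P_3$ component to $G$, contradicting the hypothesis that $G$ has no $P_3$ components. To show $G$ is not $(\infty, k)$-polar, I will examine any hypothetical partition $(A, B)$: since $A$ is complete multipartite, either $A$ is contained in a single $H_i$ (so $H_{3-i} \subseteq B$ must be a cluster, contradicting condition 1), or $A$ is an independent set, in which case each $(A \cap V(H_i), B \cap V(H_i))$ is a $(1, k_i')$-polar partition of $H_i$ with $k_i' \geq k_i$ by condition 2, forcing $B$ to have at least $k_1 + k_2 = k + 1$ cluster components, a contradiction. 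For minimality, to show that $G - v$ is $(\infty, k)$-polar I will take WLOG $v \in V(H_1)$ and split by condition 3: if $H_1 - v$ is $(1, k_1-1)$-polar, combine with a $(1, k_2)$-polar partition of $H_2$ to yield a $(1, k)$-polar partition of $G - v$; if $H_1 - v$ is a cluster, I will observe that this case only arises when $H_1$ is not a minimal $(1, k_1-1)$-polar obstruction (since in case (a) every vertex-deletion is $(1, k_1-1)$-polar), so condition 4 delivers an $(\infty, k_2-1)$-polar partition of $H_2$, which I combine with $H_1 - v$ into an $(\infty, k)$-polar partition of $G - v$, using Remark~\ref{rem:(Infty,k-1)PolarityAndKClusters} to bound the number of components of $H_1 - v$.

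For the forward direction, I will assume that $G$ is a disconnected cograph minimal $(\infty, k)$-polar obstruction without isolated vertices and without $P_3$ components. By Lemma~\ref{lem:Geq2NonCompleteComponents}, $G$ has at least two non-complete components, and by Lemma~\ref{lem:BasicsOf(Infty,k)PO} every complete component is a $K_2$ while every non-complete component has at least four vertices. The key structural observation I will exploit is that whenever a subgraph under consideration has at least two non-complete components, every $(\infty, k)$-polar partition $(A, B)$ must have $A$ as a single independent part, because a complete multipartite graph with $\geq 2$ parts is connected and would have to lie inside a single component, leaving at least one non-complete component entirely in $B$, which is impossible. I would then construct the decomposition $V(G) = V(H_1) \cup V(H_2)$ by grouping the components of $G$ into two unions each containing at least one non-complete component, and define each $k_i$ to be the minimum integer for which $H_i$ is $(1, k_i)$-polar. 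The identity $k = k_1 + k_2 - 1$ follows from $G$ being not $(1, k)$-polar while becoming so upon deleting any vertex of a non-complete component. Conditions 1--3 on each $H_i$ then follow from the minimality of $G$ applied to vertex-deletions inside $H_i$, using the reduction above. Condition 4 is verified by examining the special vertex of $H_i$ whose removal produces a cluster (which, when $H_i$ is not itself a minimal $(1, k_i-1)$-polar obstruction, exists by Lemma~\ref{lem:CharacterizationOfAitchs} case (c)): the minimality of $G$ at such a vertex forces $H_j$ to be $(\infty, k_j-1)$-polar.

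The hardest step I expect is in the forward direction: proving that the components of $G$ genuinely admit a \emph{two}-part grouping satisfying all the conditions. The delicate point is that when a vertex $v$ lies in a non-complete component $D$ of $G$ such that $D - v$ is complete, the subgraph $G - v$ may drop to having a single non-complete component, so the reduction from $(\infty, k)$-polar to $(1, k)$-polar partitions of $G - v$ breaks down, and partitions with $A$ multipartite contained in the surviving non-complete component become legitimate. We must argue that such partitions still force the same grouping of components. Moreover, verifying that each $H_i$ lands exactly in case (a) or case (c) of Lemma~\ref{lem:CharacterizationOfAitchs} --- never in case (b), since $G$ has no $P_3$ components --- requires tracking connected minimal $(1, s)$-polar obstructions via Theorem~\ref{theo:Charact(1,k)PO} and carefully bookkeeping how the $K_2$-components of $G$ are distributed between $H_1$ and $H_2$.
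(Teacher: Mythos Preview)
Your proposal follows essentially the same approach as the paper: the same decomposition of $G$ into two non-cluster pieces $H_1,H_2$, the same reduction of $(\infty,k)$-polar partitions to $(1,k)$-polar partitions whenever both pieces are non-clusters, and the same case split in the reverse direction according to condition~3. Two small points are worth flagging.

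First, your argument for $k=k_1+k_2-1$ differs from the paper's. You argue via vertex deletion: since $G$ is not $(1,k)$-polar but $G-v$ is (for a suitable $v$), and deleting one vertex lowers the minimum $(1,\ell)$-polarity parameter by at most one, you obtain $k\ge k_1+k_2-1$. This is valid and arguably cleaner than the paper's route, which instead assumes $k<k_1+k_2-1$ and uses Theorem~\ref{theo:Charact(1,k)PO} and Lemma~\ref{lem:CharacterizationOfAitchs} to exhibit a proper induced subgraph $H_1'+H_2$ that is already an $(\infty,k)$-polar obstruction, contradicting minimality. Note however that your phrase ``upon deleting \emph{any} vertex of a non-complete component'' should be ``upon deleting \emph{some} vertex'': you need to pick $v$ so that $G-v$ still has two non-complete components (so that $(\infty,k)$-polarity of $G-v$ forces $(1,k)$-polarity). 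Such a $v$ exists precisely because no component is $P_3$, hence each non-complete component has order at least four and contains a vertex outside some induced $P_3$.

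Second, the ``delicate point'' you flag is not actually an obstacle. Condition~3 explicitly allows $H_i-v$ to be a cluster, so there is nothing to prove in that case; the reduction to $(1,k)$-polar partitions is only invoked for those $v$ with $H_i-v$ not a cluster, and for condition~4 one deliberately \emph{chooses} the vertex $v$ with $H_i-v\cong k_iK_2$ (guaranteed by Lemma~\ref{lem:CharacterizationOfAitchs} and Remark~\ref{rem:(Infty,k-1)PolarityAndKClusters} when $H_i$ is not a minimal $(1,k_i-1)$-polar obstruction). The paper also takes $H_1$ to be a \emph{single} non-complete component $G_1$ and $H_2=G-G_1$; your vaguer ``grouping'' works too, but fixing this specific choice simplifies the bookkeeping.
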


\begin{proof}
Suppose that $G$ is a disconnected cograph minimal
$(\infty, k)$-polar obstruction without isolated
vertices.   From Lemma
\ref{lem:Geq2NonCompleteComponents}, $G$ has at least
two non-complete components, say $G_1$ and $G_2$.
Let $H_1 = G_1$ and $H_2 = G - G_1$.   Evidently,
$G = H_1 + H_2$ and both, $H_1$ and $H_2$, are cographs
that are not clusters.

Let $\{i, j\} = \{1,2\}$. Observe that, since $G_j$ is a
non-complete component of $G$ and $G$ has no components
isomorphic to $P_3$, there exists a vertex $v$ of $H_j$
such that $H_j - v$ is not a cluster.   In addition, by
the minimality of $G$, $G-v$ admits an
$(\infty, k)$-polar partition $(A,B)$, but
$G-v = H_i + (H_j-v)$, so $G-v$ has at least two
non-complete components, and therefore $(A,B)$ is a
$(1, k)$-polar partition.   Furthermore, since $H_i$
contains $P_3$ as an induced subgraph, it is not a
$(1,0)$-polar graph.   The above observations imply
that there exists an integer $k_i\in\{1, \dots, k-1\}$
such that $H_i$ is a $(1, k_i)$-polar graph that is
not $(1, k_i -1)$-polar.   Note that $G$ is a
$(1, k_1+k_2)$-polar graph that is not
$(\infty, k)$-polar, which implies that
$k \le k_1 + k_2 -1$.

Let $v$ be a vertex of $H_i$, and let $(A, B)$ be an
$(\infty, k)$-polar partition of $G-v$.   If $H_i-v$
is not a cluster, and given that $H_j$ is neither,
$(A,B)$ is a $(1, k)$-polar partition, and since $H_j$
is not a $(1, k_j -1)$-polar graph, then
$(A \cap V_{H_i-v}, B\cap V_{H_i-v})$ is a
$(1, k - k_j)$-polar partition,
which implies that $H_i -v$
is a $(1, k_i-1)$-polar graph, because
$k - k_j \le k_i -1$.   Therefore, for each vertex $v$
of $H_i$, the graph $H_i -v$ is either a cluster or a
$(1, k_i -1)$-polar graph.

Suppose that $H_i$ is not a cograph minimal
$(1, k_i -1)$-polar obstruction.   Since $H_i$ is not a
$(1, k_i-1)$-polar graph, it follows from Lemma
\ref{lem:CharacterizationOfAitchs} and Remark
\ref{rem:(Infty,k-1)PolarityAndKClusters} that there
exists a vertex $v$ of $H_i$ for which
$H_i -v \cong k_iK_2$.   Let $(A, B)$ be an
$(\infty, k)$-polar partition of $G-v$. The graph $H_j$
is not a cluster, so we have that
$A \cap V_{H_j} \neq \varnothing$, and then
$(A \cap V_{H_j}, B \cap V_{H_j})$ is an
$(\infty, k-k_i)$-polar partition of $H_j$, and
therefore $H_j$ is an $(\infty, k_j-1)$-polar graph,
because $k-k_i \le k_j-1$.   Hence, if $H_i$ is not a
cograph minimal $(1, k_i-1)$-polar obstruction, then
$H_j$ is an $(\infty, k_j-1)$-polar graph.

So far, we have only shown that $k \le k_1 + k_2 -1$.
To prove the equality, we will show that $G$ is
not a cograph minimal $(\infty, j)$-polar obstruction
for $j \le k_1 + k_2 -2$, which implies that
$k \ge k_1 + k_2 -1$.

It follows from Lemma \ref{lem:CharacterizationOfAitchs}
that $k_i \ge 2$, and by construction we have that
$k_i \le k-1$.   The above observations imply that if
$k \le 2$, then there exist no $(\infty,k)$-polar
obstructions without isolated vertices or components
isomorphic to $P_3$, so we can assume that $k \ge 3$.
Suppose, to reach a contradiction, that $k < k_1 + k_2
-1$, in which case at least one of $k_1$ and $k_2$ is
greater than or equal to three.   Let us assume without
loss of generality that $k_1 \ge 3$.

Since $k_1 \ge 3$ we have from Theorem
\ref{theo:Charact(1,k)PO} and Lemma
\ref{lem:CharacterizationOfAitchs} that $H_1$
contains, as a proper induced subgraph, a cograph $H'_1$
which is not a cluster and such that it is a
$(1, k_1-1)$-polar graph	but it is not
$(1, k_1-2)$-polar.   Observe that the cograph
$G' = H'_1 + H_2$ is not an
$(\infty, k_1 + k_2 - 2)$-polar graph, because since
neither $H'_1$ nor $H_2$ are clusters, every
$(\infty, k_1 + k_2 -2)$-polar partition of $G'$ is a
$(1, k_1 + k_2 -2)$-polar partition, which is impossible
since $H'_1$ is not $(1, k_1-2)$-polar and $H_2$ is not
$(1, k_2-1)$-polar.   Therefore $G$ has a
cograph $(\infty, k_1+k_2 -2)$-polar obstruction as a
proper	induced subgraph, and then $G$ is not a cograph
minimal $(\infty ,j)$-polar obstruction for
$j  < k_1 + k_2 -2$.   As we have mentioned, it proves
that $k = k_1 + k_2 -1$, which is absurd since we are
supposing that $k < k_1 + k_2 -1$.   Thus,
$k = k_1 + k_2 -1$ as we intended.   This finalizes
the proof of the first implication of the proposition.

For the converse implication let us suppose that
$G = H_1 + H_2$ is a cograph without components
isomorphic to $P_3$ such that, for some positive
integers $k_1$ and $k_2$ and any election of
$i,j\in\{1, 2\}$, $i \neq j$, the graphs $H_i$ and
$H_j$ satisfy the enumerated items of this lemma's
statement.

Suppose for a contradiction that $G$ admits an
$(\infty, k)$-polar partition $(A, B)$.   Since $H_1$
and $H_2$ are not clusters and $k = k_1 + k_2 -1$,
$(A, B)$ is a $(1, k_1 + k_2 -1)$-polar partition of
$G$, but this is impossible since for hypothesis $H_i$
is not a $(1, k_i -1)$-polar cograph for any
$i \in \{1,2\}$.   Thus $G$ is not an
$(\infty, k$)-polar graph.

Let $v$ be a vertex of $G$, let us suppose without loss
of generality that $v \in V_{H_1}$.   If $H_1 -v$ admits
a $(1, k_1 -1)$-polar partition $(A_1, B_1)$, then, for
any $(1, k_2)$-polar partition $(A_2, B_2)$ of $H_2$,
$(A_1 \cup A_2, B_1 \cup B_2)$ is a $(1, k)$-polar
partition of $G-v$.   Otherwise, if $H_1 -v$ is not a
$(1, k_1 -1)$-polar graph, by item 3 we have that
$H_1 - v$ is a cluster, and by Lemma
\ref{lem:CharacterizationOfAitchs}
and Remark
\ref{rem:(Infty,k-1)PolarityAndKClusters}
it has exactly $k_1$
components.   In addition, by item 4, $H_2$ is an
$(\infty, k_2 -1)$-polar graph.   Thus, if $(A_1, B_1)$
is a $(0, k_1)$-polar partition of $H_1 -v$ and
$(A_2, B_2)$ is an $(\infty, k_2 -1)$-polar partition of
$H_2$, then $(A_1 \cup A_2, B_1 \cup B_2)$ is an
$(\infty, k)$-polar partition of $G-v$.   Hence, $G$ is
a cograph minimal $(\infty, k)$-polar obstruction.
Evidently, $G$ is a disconnected graph, and it follows
from Lemma \ref{lem:CharacterizationOfAitchs} that $G$
has no isolated vertices.
\end{proof}

Based on Lemma \ref{lem:CharacterizationOfAitchs}, Remark
\ref{rem:(Infty,k-1)PolarityAndKClusters}, and Lemma
\ref{lem:ObsminWhitoutP3CompNorIsolated} it is
straightforward to deduce the following recursive
construction of cograph minimal $(\infty, k)$-polar
obstructions without isolated vertices nor components
isomorphic to $P_3$.

\begin{theorem} 
\label{theo:ObsminWithoutIsolatedNorP3Comp}
Let $k$ be a positive integer, and let $G$ be a graph
without components isomorphic to $P_3$.   Then, $G$ is a
disconnected cograph minimal $(\infty ,k)$-polar obstruction
without isolated vertices if and only for some positive
integers $k_1$ and $k_2$,
and some cographs $H_1$ and $H_2$,
\begin{enumerate}
	\item $G = H_1 + H_2$,
	\item $k = k_1 + k_2 - 1$,
	\item for $i \in \{1,2\}$, $H_i$ is either a cograph
		minimal $(1, k_i -1)$-polar obstruction that is
		neither a cograph minimal $(1, \infty)$-polar
		obstruction nor isomorphic to $k_iK_2$, or
		$k_i \ge 2$ and
		$H \cong (k_i-2)K_2 + (K_1 \oplus 2K_2)$, and
	\item if $H_i \cong (k_i-2)K_2 + (K_1 \oplus 2K_2)$ and
		$G- H_i$ is a cograph minimal $(1, k_i -1)$-polar
		obstruction, then $G - H_i$ has exactly one
		component non-isomorphic to $K_2$.
\end{enumerate}
\end{theorem}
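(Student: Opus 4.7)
The plan is to derive the theorem almost entirely from Lemma \ref{lem:ObsminWhitoutP3CompNorIsolated}, which already supplies an ``if and only if'' characterization of the relevant obstructions in terms of five internal conditions on a decomposition $G = H_1 + H_2$ with parameters $k_1, k_2$. My job is therefore to translate those five technical conditions into the four cleaner conditions stated in the theorem, using Lemma \ref{lem:CharacterizationOfAitchs} to pin down the possible isomorphism types of each $H_i$, and Remark \ref{rem:(Infty,k-1)PolarityAndKClusters} to rephrase the $(\infty, k_j-1)$-polarity condition in terms of component structure.

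First I would observe that condition (1) of the theorem ($G = H_1 + H_2$) is literally the decomposition supplied by Lemma \ref{lem:ObsminWhitoutP3CompNorIsolated}, and that condition (2) of the theorem is exactly item (5) of the lemma. Next, I would note that items (1)--(3) of Lemma \ref{lem:ObsminWhitoutP3CompNorIsolated} (not a cluster; $(1, k_i)$-polar but not $(1, k_i{-}1)$-polar; and after the deletion of any vertex, $(1, k_i{-}1)$-polar or a cluster) are exactly the three hypotheses of Lemma \ref{lem:CharacterizationOfAitchs}. That lemma therefore forces each $H_i$ into one of three isomorphism types: (a) a cograph minimal $(1, k_i{-}1)$-polar obstruction, distinct from any $(1,\infty)$-polar obstruction and from $k_i K_2$; (b) $P_3 + (k_i{-}1)K_2$; or (c) $(k_i{-}2)K_2 + (K_1 \oplus 2K_2)$ with $k_i \ge 2$. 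Since by hypothesis $G$ has no component isomorphic to $P_3$, case (b) is ruled out, and what remains is precisely condition (3) of the theorem.

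The last point is translating item (4) of Lemma \ref{lem:ObsminWhitoutP3CompNorIsolated} into condition (4) of the theorem. Given the trichotomy from Lemma \ref{lem:CharacterizationOfAitchs}, ``$H_i$ is not a cograph minimal $(1, k_i{-}1)$-polar obstruction'' holds in our setting exactly when $H_i$ falls into case (c), i.e.\ $H_i \cong (k_i{-}2)K_2 + (K_1 \oplus 2K_2)$. So the lemma's item (4) becomes: whenever $H_i$ has this form, $H_j$ must be $(\infty, k_j{-}1)$-polar. I would then invoke Remark \ref{rem:(Infty,k-1)PolarityAndKClusters}: if $H_j$ itself is of form (c), it is automatically $(3, k_j{-}1)$-polar and hence the condition is vacuous; if instead $H_j$ is of form (a) (a cograph minimal $(1, k_j{-}1)$-polar obstruction different from $(1,\infty)$-obstructions and from $k_j K_2$), then being $(\infty, k_j{-}1)$-polar is equivalent to having exactly one component non-isomorphic to $K_2$. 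This matches condition (4) of the theorem (reading $G - H_i$ as $H_j$).

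The proof is essentially an unfolding exercise; the only mildly delicate step is the bookkeeping in the last paragraph, where I need to make sure that every case distinction from Lemma \ref{lem:CharacterizationOfAitchs} is paired correctly with the polarity criterion from Remark \ref{rem:(Infty,k-1)PolarityAndKClusters}, and that both directions (obstruction $\Rightarrow$ structure, and structure $\Rightarrow$ obstruction) of the equivalence in Lemma \ref{lem:ObsminWhitoutP3CompNorIsolated} are verified for the reformulated list. No new combinatorial argument is needed beyond the three cited results.
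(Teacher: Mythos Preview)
Your proposal is correct and follows exactly the route the paper intends: the paper explicitly states that the theorem is ``straightforward to deduce'' from Lemma~\ref{lem:CharacterizationOfAitchs}, Remark~\ref{rem:(Infty,k-1)PolarityAndKClusters}, and Lemma~\ref{lem:ObsminWhitoutP3CompNorIsolated}, and your write-up carries out precisely that deduction, including the elimination of the $P_3 + (k_i-1)K_2$ case and the case analysis translating item~(4). No additional argument is needed.
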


We now turn our attention to cograph minimal
$(\infty, k)$-polar obstructions without isolated
vertices that have some component isomorphic to $P_3$.
We begin with a technical characterization of such
family of graphs, followed by two lemmas that
treat with specific subcases, and finalize with a
recursive construction for these obstructions.
It is worth noticing that, by Lemma
\ref{lem:ObsminWithP3Component}, any cograph minimal
$(\infty, k)$-polar obstruction with a component
isomorphic to $P_3$ has no isolated vertices.

\begin{lemma} 
\label{lem:ObsminWithP3Component}
Let $k$ be a positive integer, and let $G$ be a graph
with at least one component isomorphic to $P_3$.
Then $G$ is a cograph minimal $(\infty, k)$-polar
obstruction if and only if $G \cong P_3 + H$, where
$H$ is a cograph that satisfies the following
statements:
\begin{enumerate}
	\item $H$ is not a $(1, k-1)$-polar graph,
	\item $H$ is not a cluster,
	\item $H$ is an $(\infty, k-1)$-polar graph,
	\item $H$ is either a $(1, k)$-polar graph or an
		$(\infty, k-2)$-polar graph, and
	\item for each vertex $v$ of $H$, the graph $H-v$ is
		either a $(1, k-1)$-polar graph or a $k$-cluster.
\end{enumerate}
\end{lemma}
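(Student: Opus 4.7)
My plan is to prove both directions by case analysis on how the distinguished $P_3$ component interacts with $(\infty, k)$-polar partitions, either of $G$ itself (for non-polarity arguments) or of vertex-deletions of $G$ (for minimality and for deriving structural properties of $H$). Throughout I will invoke the following rigidity observation: if $A$ is complete multipartite and two of its vertices are non-adjacent in $G$, they must lie in the same part of $A$; in particular, if $A$ contains an edge $xy$ from a component $C$ of $G$, then no vertex from a different component can be added to $A$, since such a vertex would be non-adjacent to both $x$ and $y$. Label the leaves of the $P_3$-component as $u, v'$ and its central vertex as $m$, and set $H = G - V(P_3)$.

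For the forward direction, property (1) is immediate: a $(1, k-1)$-polar partition $(A', B')$ of $H$ extends to an $(\infty, k)$-polar partition $(A' \cup \{u, v'\}, B' \cup \{m\})$ of $G$. For (2), assuming $H$ is a cluster, Lemma \ref{lem:BasicsOf(Infty,k)PO} forces $G \cong P_3 + aK_2 + bK_1$ with $a + b \le k+1$; a finite case analysis on $(a, b)$ shows that $G$ is either $(\infty, k)$-polar or properly contains the cluster-obstruction $K_1 + (k+1)K_2$ from Remark \ref{rem:OnlyCluster(Infty,k)PO}, each contradicting the minimality of $G$. Properties (3), (4), (5) follow by applying minimality to $G - u$, $G - m$, and $G - v$ with $v \in V(H)$, respectively, and then running a case analysis on the location of the surviving $P_3$-vertices in a chosen $(\infty, k)$-polar partition of the resulting graph: in $G - u = K_2 + H$, the three possible locations of the edge $\{m, v'\}$ force $H$ to be a cluster, $(1, k-1)$-polar, or $(\infty, k-1)$-polar, and items (1) and (2) rule out the first two; in $G - m = 2K_1 + H$, the cases by the location of $\{u, v'\}$ yield $H$ being $(1, k)$-polar, $(1, k-1)$-polar, or $(\infty, k-2)$-polar, with (1) eliminating the middle case; and in $G - v = P_3 + (H - v)$, the seven non-empty possibilities for $V(P_3) \cap A$ each certify $H - v$ as $(1, k-1)$-polar or a $k$-cluster.

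For the converse, $G$ is clearly a cograph. To show $G$ is not $(\infty, k)$-polar, suppose $(A, B)$ were such a partition and run the same case analysis on $V(P_3) \cap A$: the empty case is ruled out because $P_3$ is not a clique, the cases where $A$ contains an edge of $P_3$ are ruled out by the rigidity observation (they force $H \subseteq B$, making $H$ a cluster and contradicting (2)), and the remaining cases each certify $H$ as $(1, k-1)$-polar (or stronger), contradicting (1). For minimality, for each $v \in V(G)$ I build an $(\infty, k)$-polar partition of $G - v$ from the appropriate hypothesis: (3) handles $v \in \{u, v'\}$ by attaching the surviving $P_3$-edge as a new cluster component; (4) handles $v = m$ by either extending the independent side of a $(1, k)$-polar partition of $H$ with $\{u, v'\}$, or attaching $u$ and $v'$ as two singleton cluster components to an $(\infty, k-2)$-polar partition; and (5) handles $v \in V(H)$ by either extending the independent side with $\{u, v'\}$ and attaching $m$ as a singleton cluster component, or placing all of $V(P_3)$ into $A$ as the complete bipartite graph $K_{1,2}$.

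The principal obstacle will be property (2). All other items reduce to the rigidity observation combined with a short case analysis on a vertex-deletion partition, but ruling out the cluster case for $H$ requires simultaneously combining the component-count restrictions of Lemma \ref{lem:BasicsOf(Infty,k)PO}, the uniqueness statement of Remark \ref{rem:OnlyCluster(Infty,k)PO}, and a few carefully chosen $(\infty, k)$-polar partitions of $G$ to exhaust the residual configurations $G \cong P_3 + aK_2 + bK_1$.
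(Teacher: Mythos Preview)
Your proposal is correct and follows essentially the same approach as the paper's proof. The paper explicitly writes out items (1) and (2), then states that items (3)--(5) and the converse follow from routine case analyses on $(\infty,k)$-polar partitions of $G-v$, which is precisely what you carry out in detail. Your treatment of item (2) is organized slightly differently---you handle the cluster case in a single case analysis on $(a,b)$ rather than first eliminating isolated vertices and then the remaining $K_2$-components---but the underlying argument (combining Lemma~\ref{lem:BasicsOf(Infty,k)PO} with Remark~\ref{rem:OnlyCluster(Infty,k)PO}) is the same.
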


\begin{proof}
Suppose that $G$ is a cograph minimal
$(\infty, k)$-polar obstruction with a component
isomorphic to $P_3$, and let $H$ be such that
$G \cong P_3 + H$.   Note that $H$ can not be a
$(1, k-1)$-polar graph, because $P_3$ is a $(1,1)$-polar
graph, and then $G$ would be a $(1, k)$-polar graph.

To prove that $H$ is not a cluster we will first prove
by means of a contradiction that $H$ has no isolated
vertices.   If $H$ has at least one isolated vertex,
we have from Lemma \ref{lem:BasicsOf(Infty,k)PO} that
for some positive integers $p$ and $q$, $G$ is
isomorphic to $pK_1 + qK_2 + P_3$, but in such a case
$G$ is a $(1, q+1)$-polar graph, which implies that
$k \le q$.   Furthermore, for each integer
$j\in\{2,\dots,q\}$, $G$ contains the cograph minimal
$(\infty, j)$-polar obstruction $K_1 + (j+1)K_2$ as a
proper induced subgraph, which implies that $k\le 1$.
But it is impossible, since the cograph minimal
$(\infty, k)$-polar obstructions for $k \le 1$ have no
components isomorphic to $P_3$.   Hence, $H$ has no
isolated vertices, and if $H$ is a cluster, then for
some positive integer $q$, $G \cong qK_2 + P_3$.
We have that $q \ge k+1$ because $G$ is not an
$(\infty, k)$-polar graph, but then $G$ contains the
cograph minimal $(\infty, k)$-polar obstruction
$K_1 + (k+1)K_2$ as a proper induced subgraph,
in contradiction with the minimality of $G$.
This contradiction arose from supposing that $H$ is
a cluster, so it is not.

Items 3 to 5 can be easily proved by considering
$(\infty, k)$-polar partitions of $G-v$ when $v$ is
either a vertex of $G-H$, or when $v$ is a vertex of
$H$.   We have used similar arguments before, so the
details of these arguments are omitted. Also, the
proof of the converse implication is very similar
to the proof of the converse of Lemma
\ref{lem:ObsminWhitoutP3CompNorIsolated}, so it will
be also omitted.
\end{proof}

\begin{lemma} 
\label{coro:ObsminWithP3Component}
Let $k$ be a positive integer, and let $G \cong P_3 + H$
be a cograph minimal $(\infty, k)$-polar obstruction.
If $H$ is not a $(1, k)$-polar graph, then $H$ is
a connected non-complete graph.
\end{lemma}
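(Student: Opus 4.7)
The plan is to prove the two assertions separately. Non-completeness is essentially free: item~2 of Lemma~\ref{lem:ObsminWithP3Component} states that $H$ is not a cluster, but every complete graph is a one-clique cluster, so $H$ cannot be complete.

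For connectedness I would argue by contradiction, assuming $H = H_1 + H_2 + \dots + H_r$ with $r \ge 2$. The first step is to combine item~4 of Lemma~\ref{lem:ObsminWithP3Component} with the hypothesis that $H$ is not $(1,k)$-polar to conclude that $H$ admits an $(\infty, k-2)$-polar partition $(A,B)$. A crucial observation is then that $A$ must have at least two parts, for otherwise $A$ would be independent and $(A,B)$ would be a $(1, k-2)$-polar (hence $(1,k)$-polar) partition of $H$, contradicting the hypothesis. Consequently $A$ is connected and lies in a single component, say $H_1$, which forces $V_{H_j} \subseteq B$ for every $j \ge 2$. Each such $H_j$ is therefore a complete component of $H$; combined with item~5 of Lemma~\ref{lem:BasicsOf(Infty,k)PO} and the absence of isolated vertices in $H$ (established inside the proof of Lemma~\ref{lem:ObsminWithP3Component}), this forces each $H_j \cong K_2$, and so $H = H_1 + (r-1)K_2$ with $H_1$ non-complete and connected.

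Next, I would pick a vertex $v$ from any $K_2$-component and apply item~5 of Lemma~\ref{lem:ObsminWithP3Component} to $H - v \cong H_1 + (r-2)K_2 + K_1$; since $H_1$ is not a clique, $H-v$ is not a cluster, so it must be $(1, k-1)$-polar. From any such partition, after pushing (if necessary) the leftover isolated vertex into the independent side at no cost, I would extract a $(1, k-r+1)$-polar partition of $H_1$. Splitting each of the $r-1$ intact $K_2$'s of $H$ by placing one vertex on each side then assembles a $(1, k)$-polar partition of $H$: the independent side remains independent because the $K_2$-contributions lie in pairwise distinct components, and the cluster side picks up exactly $r-1$ new isolated-vertex components, for a total of at most $(k-r+1)+(r-1)=k$ components. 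This contradicts the hypothesis that $H$ is not $(1,k)$-polar, so $H$ is connected. The main obstacle is the first structural reduction in the second paragraph; once $H$ has been reduced to a single non-complete component plus a bunch of $K_2$'s, the extension of the partition of $H_1$ to all of $H$ is a short counting check.
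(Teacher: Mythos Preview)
Your argument is correct and follows essentially the same route as the paper: reduce $H$ to a single non-complete component plus copies of $K_2$, delete a vertex $v$ from a $K_2$-component, and invoke item~5 of Lemma~\ref{lem:ObsminWithP3Component} to manufacture a $(1,k)$-polar partition of $H$. The paper's final step is slightly more direct than yours: once $(A,B)$ is a $(1,k-1)$-polar partition of $H-v$, the pair $(A,\,B\cup\{v\})$ is already a $(1,k)$-polar partition of $H$ (the sole neighbour of $v$ is either in $A$ or forms a $K_2$ with $v$ inside an existing component of $B$), so the detour through a $(1,k-r+1)$-polar partition of $H_1$ and the subsequent reassembly are unnecessary.
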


\begin{proof}
Let $k, G$ and $H$ be as in the hypothesis.   By Lemma
\ref{lem:ObsminWithP3Component}, $H$ is not a cluster,
so $H$ has at least one non-complete component.
Moreover, it also follows from Lemma
\ref{lem:ObsminWithP3Component} that $H$ is an
$(\infty, k-1)$-polar graph that is not
$(1, k-1)$-polar, which implies that $H$ cannot have
more than one non-complete component.   Therefore, $H$
has precisely one non-complete component.   In addition,
since $H$ is an induced subgraph of $G$,
it follows from Lemma \ref{lem:BasicsOf(Infty,k)PO}
that every non-complete component of $H$ is isomorphic
to $K_1$ or $K_2$.   Moreover, it also follows from
Lemma \ref{lem:BasicsOf(Infty,k)PO} that $H$ has no
isolated vertices, otherwise $G$ would have at most one
non-complete component, which is not the case.   Hence,
for some non-negative integer $\ell$,
$H \cong \ell K_2 + H'$, where $H'$ is a connected
non-complete graph.

Suppose that $\ell \ge 1$, and let $v \in V_{H-H'}$.
Note that since $H' \le H-v$, we have that $H-v$ is
not a cluster. Hence, by Lemma
\ref{lem:ObsminWithP3Component},
we have that $H-v$ admits a $(1,k-1)$-polar partition
$(A, B)$.   But in such case, $(A, B\cup\{v\})$ is a
$(1, k)$-polar partition of $H$, which is impossible
from our original hypotheses.   The contradiction arose
from supposing that $\ell \ge 1$, so $\ell = 0$ and then
$H = H'$, which proves that $H$ is a connected
non-complete graph.
\end{proof}

The next trivial observation will be helpful in some
of the following results.   It is immediate from the
cotree representation of cographs.

\begin{remark} 
\label{rem:K1OplusH}
Let $H$ be a connected cograph, and let $H'$ be a
disconnected induced subgraph of $H$.   Then
$K_1 \oplus H'$ is also an induced subgraph of $H$.
\end{remark}

\begin{lemma} 
\label{cor:ObsminType(2,0)}
Let $k$ be a positive integer, and let $G$ be a graph
with at least one component isomorphic to $P_3$.
\begin{enumerate}
	\item If $k=2$ then, $G$ is a cograph minimal
		$(\infty, k)$-polar obstruction of type $(2,0)$ if
		and only if $G \cong P_3 + C_4$ or
		$G \cong P_3 + (K_1 \oplus 2K_2)$.
	\item If $k \ge 3$ then, $G$ is a cograph minimal
		$(\infty, k)$-polar obstruction of type $(2,0)$ if
		and only if $G \cong P_3 + H$, where $H$ is
		any connected cograph minimal $(1, k-1)$-polar
		obstruction.
\end{enumerate}
\end{lemma}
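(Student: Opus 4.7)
The plan is to apply Lemma \ref{lem:ObsminWithP3Component}. Since $G$ has type $(2,0)$ and a component isomorphic to $P_3$, we may write $G = P_3 + H$ with $H$ a connected cograph of order at least $2$. By Lemma \ref{lem:ObsminWithP3Component}, $G$ is a cograph minimal $(\infty, k)$-polar obstruction if and only if $H$ satisfies the five listed properties, so the task reduces to classifying such $H$ separately for $k = 2$ and $k \geq 3$.

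For $k \geq 3$, I claim that the valid $H$ are exactly the connected cograph minimal $(1, k-1)$-polar obstructions. The forward direction uses property 1 ($H$ is not $(1, k-1)$-polar) and reduces, via property 5, to showing that $H - v$ is $(1, k-1)$-polar for every $v$. The subtle case is when property 5 only provides that $H - v$ is a $k$-cluster not $(1, k-1)$-polar, i.e., a cluster with $k$ non-trivial components. Since $H$ is a connected cograph, $v$ must then be joined to every component of $H - v$, so $H \cong K_1 \oplus C$ with $C$ having $k$ non-trivial components. A direct computation, taking $u$ inside any component of $C$, shows that $H - u$ is connected, not a cluster, and not $(1, k-1)$-polar, contradicting property 5 applied to $u$. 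Hence $H - v$ is $(1, k-1)$-polar for every $v$, so $H$ is a connected minimal $(1, k-1)$-polar obstruction. For the backward direction, I would enumerate the connected minimal $(1, k-1)$-polar obstructions via Theorem \ref{theo:Charact(1,k)PO}(1) and verify properties 2, 3, and 4 case by case by exhibiting appropriate partitions; properties 1 and 5 are immediate from the definition of minimal obstruction.

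For $k = 2$, I would split according to property 4. If $H$ is complete multipartite, then property 1 (not split) forces $H$ to have at least two parts of size $\geq 2$, and a case analysis of property 5 depending on which part a removed vertex belongs to pins $H$ down to $H \cong C_4$. Otherwise $H$ is $(1, 2)$-polar; since the only connected cograph minimal $(1, 1)$-polar obstruction is $C_4$ (via Foldes--Hammer), either $H \cong C_4$ directly, or $H$ properly contains an induced $C_4$ or $2K_2$. In the latter case, choosing $v$ outside such an induced copy gives $H - v$ non-split, so by property 5 it is a $2$-cluster. Cograph connectivity then forces $H \cong K_1 \oplus (K_a + K_b)$; property 1 gives $a, b \geq 2$, and a further application of property 5 at a vertex of $K_a$ (resp.\ $K_b$) shows $a = 2$ (resp.\ $b = 2$), as otherwise $H - u \cong K_1 \oplus (K_{a-1} + K_b)$ would contain an induced $2K_2$ while remaining connected, violating property 5. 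Thus $H \cong K_1 \oplus 2K_2$. The converse direction in both parts is a routine verification of the five properties on the exhibited candidates. The main obstacle is this structural case analysis for $k = 2$: pinning down $K_1 \oplus 2K_2$ as the only additional candidate besides $C_4$ requires combining the characterization of minimal split obstructions, the cotree decomposition of $H$, and iterated applications of property 5.
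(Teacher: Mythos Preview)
Your proof is correct. Both your argument and the paper's start from Lemma~\ref{lem:ObsminWithP3Component}, but the forward direction for $k\ge 3$ proceeds differently. The paper picks a cograph minimal $(1,k-1)$-polar obstruction $H'$ inside $H$ and bootstraps from the backward direction: if $H'$ is connected then $P_3+H'$ is already a minimal $(\infty,k)$-polar obstruction, forcing $H=H'$ by minimality of $G$; if $H'$ is disconnected, Remark~\ref{rem:K1OplusH} together with property~5 forces $H'\cong kK_2$, and then $H$ properly contains the connected minimal $(1,k-1)$-obstruction $K_1\oplus(2K_2+(k-2)K_1)$, again contradicting minimality of $G$. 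You instead work intrinsically in $H$ to show it is itself minimal: assuming property~5 lands in the ``$k$-cluster but not $(1,k-1)$-polar'' branch at some $v$, you identify $H\cong K_1\oplus C$ and exhibit a second vertex $u$ at which property~5 fails, a contradiction. Your route avoids invoking the backward direction or the explicit list of Theorem~\ref{theo:Charact(1,k)PO} during the forward step, at the cost of the small computation at $u$; the paper's route gets the forward step almost for free once the backward direction is in place. For $k=2$ the paper only says the case is similar, while you supply an explicit case split on property~4 (complete multipartite versus $(1,2)$-polar), which is more informative.
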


\begin{proof}
We prove only the second statement, the case $k = 2$ can
be treated in a very similar way.   Suppose that $H$
is any connected cograph minimal $(1, k-1)$-polar
obstruction.   Since $k \ge 3$, we have from Theorems
\ref{theo:Charact(1,infty)PO} and
\ref{theo:Charact(1,k)PO} that $H$ is isomorphic to one
cograph in the set
\[\{ K_1 \oplus C_4,\ K_2 \oplus 2K_2,\
\overline{2P_3},\ K_1 \oplus (K_2 + P_3),\ K_{k, k},\
\overline{K_2} \oplus (K_2 + (k-1)K_1), \
K_1 \oplus (2K_2 + (k-2)K_1)\}.
\]

It is straightforward to check that in any case $H$
satisfies the items enumerated in Lemma
\ref{lem:ObsminWithP3Component}, which implies that
$P_3 + H$ is a cograph minimal $(\infty, k)$-polar
obstruction of type $(2,0)$.

Conversely, let suppose that $G$ is a cograph minimal
$(\infty, k)$-polar obstruction of type $(2,0)$.
From Lemma \ref{lem:ObsminWithP3Component},
$G \cong P_3 + H$ where $H$ is a connected non-complete
cograph that contains a cograph minimal $(1, k-1)$-polar
obstruction $H'$ as an induced subgraph. As we just
mentioned, if $H'$ is connected, then $P_3 + H'$ is a
cograph minimal $(\infty, k)$-polar obstruction, so
$H = H'$.   Otherwise, if $H'$ is disconnected, it
follows from Remark \ref{rem:K1OplusH} that $H$ contains
$K_1 \oplus H'$ as an induced subgraph.   Nevertheless,
from Theorem \ref{theo:Charact(1,k)PO} and Lemma
\ref{lem:ObsminWithP3Component}, $H' \cong kK_2$, but in
this case $H$ contains properly the connected cograph
minimal $(1, k-1)$-polar obstruction
$K_1 \oplus (2K_2 + (k-2)K_1)$ as an induced subgraph,
which contradicts the minimality of $G$.
\end{proof}

\begin{theorem} 
\label{theo:ObsminWithP3Compo}
Let $k$ be an integer, $k\ge 2$, and let $G$ be a
graph with at least one component isomorphic to $P_3$.
Then, $G$ is a cograph minimal $(\infty, k)$-polar
obstruction if and only if $G \cong P_3 + H$ and one
of the following statements is satisfied:
\begin{enumerate}
	\item $H \cong P_3 + (k-1)K_2$.
	\item $H \cong (k-2)K_2 + (K_1 \oplus 2K_2)$.
	\item for some integer $j \in \{1, \dots, k-1\}$,
		$H \cong (k-j-1)K_2 + H_j$, where $H_j$ is a
		connected cograph minimal $(1, j)$-polar obstruction
		that is not a cograph minimal $(1, \infty)$-polar
		obstruction.
	\item $k \ge 3$, and $H$ is any cograph minimal
		$(1, \infty)$-polar obstruction.
\end{enumerate}
\end{theorem}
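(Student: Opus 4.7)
The plan is to use Lemma \ref{lem:ObsminWithP3Component} as the backbone: $G \cong P_3 + H$ is a cograph minimal $(\infty, k)$-polar obstruction if and only if the cograph $H$ satisfies the five conditions listed there. Thus the task reduces to showing that a cograph $H$ satisfies those conditions if and only if $H$ fits one of items 1--4.

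For the necessity, I split on whether $H$ is $(1, k)$-polar. If $H$ is $(1, k)$-polar, then conditions (1), (2), (5) of Lemma \ref{lem:ObsminWithP3Component} are exactly the hypotheses of Lemma \ref{lem:CharacterizationOfAitchs} (noting that a $k$-cluster is a cluster), so $H$ is of type \emph{a}, \emph{b}, or \emph{c}. Types \emph{b} and \emph{c} are items 1 and 2. For type \emph{a}, $H$ is a cograph minimal $(1, k-1)$-polar obstruction that is neither a minimal $(1, \infty)$-polar obstruction nor $kK_2$; condition (3) of Lemma \ref{lem:ObsminWithP3Component} together with Remark \ref{rem:(Infty,k-1)PolarityAndKClusters}(1) forces $H$ to have exactly one non-$K_2$ component, so an application of Theorem \ref{theo:Charact(1,k)PO} yields the decomposition $H \cong (k-j-1)K_2 + H_j$ of item 3. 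If instead $H$ is not $(1, k)$-polar, then Lemma \ref{coro:ObsminWithP3Component} implies $H$ is connected non-complete, and Lemma \ref{cor:ObsminType(2,0)} identifies $H$ as a connected cograph minimal $(1, k-1)$-polar obstruction for $k \geq 3$. The three exceptional connected minimal $(1, k-1)$-polar obstructions from Theorem \ref{theo:Charact(1,k)PO}, namely $K_{k,k}$, $\overline{K_2} \oplus (K_2 + (k-1)K_1)$ and $K_1 \oplus (2K_2 + (k-2)K_1)$, each admit an explicit $(1, k)$-polar partition, so in this branch $H$ must be a cograph minimal $(1, \infty)$-polar obstruction, giving item 4. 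The case $k = 2$ in this branch is handled by noting that the two possible graphs from Lemma \ref{cor:ObsminType(2,0)} are both $(1, 2)$-polar, so they fall under the previous case.

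For the sufficiency, I verify properties (1)--(5) of Lemma \ref{lem:ObsminWithP3Component} for each of items 1--4. Items 1 and 2 follow immediately from parts (2) and (3) of Remark \ref{rem:(Infty,k-1)PolarityAndKClusters} together with the observation that both $P_3 + (k-1)K_2$ and $(k-2)K_2 + (K_1 \oplus 2K_2)$ admit explicit $(1, k)$-polar partitions, handling condition (4) via its first alternative. For item 3, Theorem \ref{theo:Charact(1,k)PO} shows that $(k-j-1)K_2 + H_j$ is a minimal $(1, k-1)$-polar obstruction (yielding properties (1) and (5)), while Remark \ref{rem:(Infty,k-1)PolarityAndKClusters}(1) gives the $(\infty, k-1)$-polar partition of property (3); a $(1, j+1)$-polar partition of $H_j$ (obtained by direct inspection of the candidates from Theorems \ref{theo:Charact(1,infty)PO} and \ref{theo:Charact(1,k)PO}) extended by the $k - j - 1$ copies of $K_2$ produces a $(1, k)$-polar partition, establishing property (4). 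For item 4, one verifies individually for each of the four minimal $(1, \infty)$-polar obstructions listed in Theorem \ref{theo:Charact(1,infty)PO} that it admits an $(\infty, 1)$-polar partition (providing both (3) and, since $k \geq 3$, (4) via its second alternative) and that every single-vertex deletion is $(1, 2)$-polar (hence $(1, k-1)$-polar for $k \geq 3$), yielding (5); property (2) is immediate as none of these graphs is a cluster.

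The main obstacle will be the case-by-case verifications for item 4, where I must exhibit concrete $(\infty, 1)$-polar partitions of $K_1 \oplus C_4$, $K_2 \oplus 2K_2$, $\overline{2P_3}$, and $K_1 \oplus (K_2 + P_3)$, and confirm the $(1, 2)$-polarity of every vertex-deleted subgraph. These verifications are routine but carry the bulk of the computational content, since the more structural steps are largely abstracted away by the preceding lemmas and remarks.
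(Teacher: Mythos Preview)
Your proposal is correct and follows essentially the same approach as the paper: both proofs hinge on Lemma~\ref{lem:ObsminWithP3Component}, split the forward direction according to whether $H$ is $(1,k)$-polar, invoke Lemma~\ref{lem:CharacterizationOfAitchs} with Remark~\ref{rem:(Infty,k-1)PolarityAndKClusters} in the first branch and Lemmas~\ref{coro:ObsminWithP3Component}--\ref{cor:ObsminType(2,0)} in the second, and derive the converse from the same auxiliary results. Your treatment is in fact slightly more explicit than the paper's---you note that the three exceptional connected $(1,k-1)$-polar obstructions from Theorem~\ref{theo:Charact(1,k)PO} are $(1,k)$-polar and hence cannot occur in the second branch, and you spell out the converse verifications for each item---whereas the paper simply records that the second branch yields item~3 or item~4 and dispatches the converse in one sentence.
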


\begin{proof}
Suppose that $G \cong P_3 + H$ is a cograph minimal
$(\infty, k)$-polar obstruction.   From Lemmas
\ref{lem:CharacterizationOfAitchs} and
\ref{lem:ObsminWithP3Component}, and Remark
\ref{rem:(Infty,k-1)PolarityAndKClusters}, we have that
if $H$ is a $(1, k)$-polar graph, then $H$ satisfies
one of the following statements:
\begin{enumerate}[label=\alph*.]
	\item $H \cong P_3 + (k-1)K_2$.
	\item $k \ge 2$, and
		$H \cong (k-2)K_2 + (K_1 \oplus 2K_2)$.
	\item $H$ is a cograph minimal $(1, k-1)$-polar
		obstruction, that is neither a cograph minimal
		$(1, \infty)$-polar obstruction nor isomorphic
		to $kK_2$, and such that exactly one of its
		components is not isomorphic to $K_2$.
\end{enumerate}

Furthermore, from Theorem \ref{theo:Charact(1,k)PO} we
have that the graphs described in item c are precisely
the graphs $H$ such that, for some integer
$j \in \{1, \dots, k-1\}$, $H \cong (k-j-1)K_2 + H_j$,
where $H_j$ is a connected cograph minimal
$(1, j)$-polar obstruction that is not a cograph minimal
$(1, \infty)$-polar obstruction.

Suppose then that $H$ is not a $(1,k)$-polar graph.
It follows from Lemma \ref{coro:ObsminWithP3Component}
that $H$ is a connected non-complete graph, and then
$G$ is a cograph minimal $(\infty, k)$-polar obstruction
of type $(2, 0)$, and then it follows from Lemma
\ref{cor:ObsminType(2,0)} that $H$ satisfies item 3 or
item 4 of the theorem statement.

The converse follows easily from Lemmas
\ref{lem:CharacterizationOfAitchs},
\ref{lem:ObsminWithP3Component},
\ref{cor:ObsminType(2,0)},
and Remark
\ref{rem:(Infty,k-1)PolarityAndKClusters}.
\end{proof}

\section{Obstructions of type $(c, c-1)$}
\label{sec:type(c,c-1)}

So far, we have obtained a recursive characterization of
cograph minimal $(\infty, k)$-polar obstructions without
isolated vertices.   Next, we focus on the special case of
cograph minimal $(\infty, k)$-polar obstructions such that
all its components, except one, are trivial.   We begin
giving a technical characterization of these obstructions.
The proof of this result is omitted since it is very
similar to that of Lemma \ref{lem:ObsminWithP3Component}.

\begin{lemma} 
\label{lem:Obsmin(k-j+1, k-j)}
Let $j$ and $k$ be integers such that $0 \le j+1 \le k$.
Then, $G$ is a cograph minimal $(\infty, k)$-polar
obstruction of type $(k-j+1, k-j)$ if and only if
$G \cong (k-j)K_1 + H$, where $H$ is a connected
non-complete cograph that satisfies the following
statements:
\begin{enumerate}
	\item $H$ is not a $(1, k)$-polar graph,
	\item $H$ is not an $(\infty, j)$-polar graph,
	\item $H$ is an $(\infty, j+1)$-polar graph,
	\item for each vertex $v$ of $H$, $H-v$ is either
		$(1,k)$-polar or $(\infty, j)$-polar.
\end{enumerate}
\end{lemma}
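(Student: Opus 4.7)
The plan is to follow the same template as the proof of Lemma~\ref{lem:ObsminWithP3Component}, adapting the arguments to the new component structure. The type $(k-j+1,k-j)$ condition forces $G \cong (k-j)K_1 + H$ for some non-trivial connected component $H$, and I first argue that $H$ must be non-complete. Otherwise, Lemma~\ref{lem:BasicsOf(Infty,k)PO}(5) would give $H \cong K_2$, making $G$ a cluster; then Remark~\ref{rem:OnlyCluster(Infty,k)PO} would force $G \cong K_1 + (k+1)K_2$, whose type is $(k+2,1)$, and this cannot coincide with $(k-j+1,k-j)$ under the constraint $j+1 \le k$.

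For the forward direction, items~1 and 2 follow by straightforward lifting arguments. If $H$ admitted a $(1,k)$-polar partition $(A',B')$, then $(A' \cup \{\text{isolated vertices of } G\}, B')$ would be a $(1,k)$-polar partition of $G$, because an isolated vertex of $G$ is non-adjacent to everything and can be safely added to the single independent part $A'$. Similarly, if $H$ admitted an $(\infty,j)$-polar partition $(A',B')$, placing the $k-j$ isolated vertices of $G$ into $B'$ yields a cluster with at most $j+(k-j)=k$ components, making $G$ itself $(\infty,k)$-polar. Both conclusions contradict that $G$ is an obstruction.

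For items~3 and 4, I would invoke the minimality of $G$, and the engine of both arguments is the following structural observation, which I expect to be the main technical point: \emph{if an isolated vertex of $G$ belongs to the complete multipartite part $A$ of an $(\infty,k)$-polar partition, then $A$ must be an independent set.} Indeed, any isolated vertex is non-adjacent to every other vertex, so in the complete multipartite graph $A$ it must share its part with every other vertex of $A$. To prove item~3, delete an isolated vertex $w$ (which exists since $k-j \ge 1$); the minimality of $G$ produces an $(\infty,k)$-polar partition $(A,B)$ of $G-w$. If any other isolated vertex lies in $A$, the observation forces $A$ to be independent, so $(A \cap V_H, B \cap V_H)$ is a $(1,k)$-polar partition of $H$, contradicting item~1. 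Otherwise, all $k-j-1$ remaining isolated vertices lie in $B$, leaving at most $j+1$ cluster components for $B \cap V_H$, which gives the required $(\infty, j+1)$-polar partition of $H$. Item~4 uses exactly the same dichotomy after deleting a vertex $v \in V_H$: either $H-v$ is forced to be $(1,k)$-polar, or it is $(\infty,j)$-polar.

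The converse is then routine. Any hypothetical $(\infty,k)$-polar partition of $G$ restricts to a partition of $H$ which, by the same isolated-vertex observation, is either $(1,k)$-polar (when some isolated vertex lies in the multipartite part) or $(\infty,j)$-polar, both ruled out by items~1 and 2. Minimality is verified vertex by vertex: deleting an isolated vertex uses item~3 together with the fact that adding $k-j-1$ isolated vertices to the cluster side raises the number of cluster components from at most $j+1$ to at most $k$; deleting $v \in V_H$ uses item~4, with the $(1,k)$-polar subcase handled by placing the $k-j$ isolated vertices in the multipartite part (which remains an independent set) and the $(\infty,j)$-polar subcase handled by placing them in the cluster side (raising the component count from $j$ to $k$).
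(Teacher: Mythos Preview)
Your proof is correct and follows essentially the same template the paper intends (the paper omits the proof, pointing to Lemma~\ref{lem:ObsminWithP3Component}); your ``isolated vertex in $A$ forces $A$ independent'' dichotomy is exactly the right adaptation of those arguments to the $(k-j)K_1$ component structure. One tiny caveat: your claim that type $(k+2,1)$ cannot coincide with $(k-j+1,k-j)$ fails at the boundary $k=0,\ j=-1$, but this is an artifact of the lemma's own hypotheses rather than a defect in your argument, and all applications in the paper use $k\ge 2$.
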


The following result provides a pleasant recursive
characterization of cograph minimal $(\infty, k)$-polar
obstructions of type $(2,1)$.

\begin{theorem} 
\label{theo:MinObsType(2,1)}
Let $k$ be an integer, $k \ge 2$, and let $G$ be a graph
with precisely two connected components, one of them
trivial. Then $G$ is a cograph minimal $(\infty,
k)$-polar obstruction if and only if $G \cong K_1 +
(K_1 \oplus H')$, where $H'$ is a disconnected cograph
minimal $(\infty, k-1)$-polar obstruction that is
$(1,k)$-polar.
\end{theorem}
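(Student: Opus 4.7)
The plan is to apply Lemma~\ref{lem:Obsmin(k-j+1, k-j)} with $j = k-1$ to write $G \cong K_1 + H$, where $H$ is a connected non-complete cograph satisfying (i) $H$ is not $(1,k)$-polar, (ii) $H$ is not $(\infty,k-1)$-polar, (iii) $H$ is $(\infty,k)$-polar, and (iv) for every $v\in V(H)$, the graph $H-v$ is either $(1,k)$-polar or $(\infty,k-1)$-polar. The heart of the proof will lie in establishing that $H$ admits a universal vertex $u$; once this is done, setting $H' = H-u$ gives the claimed decomposition $H = K_1 \oplus H'$, and the remaining task is to verify the properties of $H'$.

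To show $H$ has a universal vertex, I would argue by contradiction via the cotree of $H$. If $H$ had no universal vertex, then the $\oplus$-root of its cotree would have no leaf children, giving $H = H_1 \oplus \cdots \oplus H_r$ with $r \ge 2$ and each $H_i$ a disconnected cograph of order at least two. A careful case analysis of $(\infty,k)$-polar partitions reveals, for $k \ge 2$, that the clique-part $B$ must be confined to a single $H_{i_0}$ (otherwise $B$ would be a single clique spanning several $H_i$'s, forcing $k = 1$), and that all remaining $H_i$'s must then be independent (being disconnected complete multipartite induced subgraphs of $A$). Using (ii) together with (iv) applied to a vertex $v$ in one of the independent $H_j$'s --- split into the sub-cases $|V(H_j)| \ge 3$ and $|V(H_j)| = 2$, and further on the value of $r$ --- one reaches a contradiction in each sub-case: $H - v$ inherits enough of the forbidden join structure that it is neither $(1,k)$-polar nor $(\infty,k-1)$-polar. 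This universal-vertex argument is the main obstacle and requires a delicate interplay between the join decomposition and conditions (i)--(iv).

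Having established $H = K_1 \oplus H'$, the properties of $H'$ follow by translating (i)--(iv) through the join structure. Any $(\infty,k-1)$-polar partition of $H'$ would extend to one of $H$ by adjoining $\{u\}$ as a new part of the multipartite side (using universality of $u$), contradicting (ii); hence $H'$ is not $(\infty,k-1)$-polar. Condition (iv) with $v = u$ then forces $H'$ to be $(1,k)$-polar. For the minimality of the obstruction, condition (iv) applied to $v \in V(H')$ ensures $K_1 \oplus (H'-v)$ is either $(1,k)$-polar or $(\infty,k-1)$-polar; in either case one deduces, using universality of $u$, that $H' - v$ is $(\infty,k-1)$-polar. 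The disconnectedness of $H'$ is automatic for $k = 2$, since all cograph minimal $(\infty,1)$-polar obstructions are disconnected (by Theorem~\ref{theo:Charact(1,infty)PO} and complementation); for $k \ge 3$, it follows from Theorems~\ref{theo:ConnectedObsmin} and~\ref{theo:CharactPolarCographs}, since every connected cograph minimal $(\infty,k-1)$-polar obstruction is the complement of $P_3 + H_0$ for some $H_0$ depicted in Figure~\ref{fig:CM(1,infty)PO}, and a direct inspection shows that no such graph is $(1,k)$-polar.

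For the converse, assuming $H'$ has the stated properties, I would set $G = K_1 + (K_1 \oplus H')$ and analyze a hypothetical $(\infty,k)$-polar partition of $G$. If the isolated vertex $v_0$ lies in the multipartite part, then universality of $u$ forces $H'$ to be a $k$-cluster or a split graph; if $v_0$ lies in the cluster part, then $K_1 \oplus H'$ must be $(\infty,k-1)$-polar, yielding in turn an $(\infty,k-1)$-polar partition of $H'$; each outcome contradicts the hypotheses on $H'$, so $G$ is not $(\infty,k)$-polar. Minimality is verified by exhibiting an $(\infty,k)$-polar partition of $G-v$ for each $v \in V(G)$: using the $(1,k)$-polar partition of $H'$ when $v \in \{v_0, u\}$, and extending an $(\infty,k-1)$-polar partition of $H' - v$ (guaranteed by minimality of $H'$) by $\{u\}$ and $\{v_0\}$ appropriately when $v \in V(H')$.
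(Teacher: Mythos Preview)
Your overall strategy is sound and your treatment of the converse direction, as well as the translation of properties from $H$ to $H' = H - u$, is essentially correct.  However, your forward direction takes a genuinely different route from the paper's, and the heart of your argument---the claim that $H$ must have a universal vertex---is only asserted, not proved.  The parenthetical ``forcing $k = 1$'' is imprecise: what you actually need is that if $B$ meets two join--factors then $B$ is a single clique, so $H$ would be $(\infty,1)$-polar and hence $(\infty,k-1)$-polar, contradicting~(ii).  With that correction one does obtain that all factors $H_j$ with $j \neq i_0$ are independent sets, but the remaining ``careful case analysis'' (on $|V(H_j)|$ and on $r$) that you promise is not carried out; in particular, showing that $H-v$ is neither $(1,k)$- nor $(\infty,k-1)$-polar for a suitable $v$ in an independent factor is not immediate and requires real work (false--twin arguments do not settle it directly when the twin lands in the cluster side).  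As written, this is a gap.

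By contrast, the paper bypasses the cotree case analysis entirely.  It first proves the converse, and then for the forward direction simply observes that $H$, being not $(\infty,k-1)$-polar, contains a cograph minimal $(\infty,k-1)$-polar obstruction $H'$; by Theorem~\ref{theo:ConnectedObsmin} this $H'$ must be disconnected (a connected one would itself be an $(\infty,k)$-polar obstruction properly inside $G$), whence Remark~\ref{rem:K1OplusH} gives $K_1 \oplus H' \le H$.  One then checks that $K_1 + (K_1 \oplus H')$ is already not $(\infty,k)$-polar, so minimality of $G$ forces $G = K_1 + (K_1 \oplus H')$, and the $(1,k)$-polarity of $H'$ follows a posteriori from condition~(iv) applied at the apex vertex.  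This ``find a minimal sub-obstruction and use Remark~\ref{rem:K1OplusH}'' trick is considerably shorter than a direct structural analysis of $H$, and you may want to adopt it in place of your universal-vertex argument.
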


\begin{proof}
Let $H = K_1 \oplus H'$, where $H'$ is a disconnected
cograph minimal $(\infty, k-1)$-polar obstruction that
is $(1,k)$-polar, and let $G = K_1 + H$.   Observe that
by the election of $H'$, $H$	is an $(\infty, k)$-polar
graph that is not $(1,k)$-polar, and for each vertex $v$
of $H$, the graph $H-v$ is either $(1,k)$- or
$(\infty, k-1)$-polar.

Let us suppose for a contradiction that $G$ admits an
$(\infty, k)$-polar partition $(A,B)$.   Since $H$
is not $(1,k)$-polar, $G[A]$ must be a non-trivial
connected graph.   Thus, since	$H$ is not complete,
$A \subseteq V_{H}$ and $H$ is an $(\infty, k-1)$-polar
graph, which is impossible.   Therefore $G$ is not an
$(\infty, k)$-polar graph.

Let $v$ be a vertex of $G$.   If $v$ is the only
isolated vertex of $G$, then $G-v = H$, and $H$ is an
$(\infty, k)$-polar graph, so $G-v$ is.   Otherwise
$v \in V_{H}$	and, as we have noted above, $H-v$ is
is either	$(1,k)$- or $(\infty, k-1)$-polar, so
$G-v$ is an $(\infty, k)$-polar graph.  Hence,
$G \cong K_1 + (K_1 \oplus H')$ is a cograph minimal
$(\infty, k)$-polar obstruction whenever $H'$ is a
cograph minimal $(\infty, k-1)$-polar obstruction that
is $(1,k)$-polar.

Conversely, suppose that $G$ is a cograph minimal
$(\infty, k)$-polar obstruction with precisely two
connected components, one of them trivial.
By Lemma \ref{lem:Obsmin(k-j+1, k-j)}, $G \cong K_1 + H$
for some connected cograph $H$ that is not
$(\infty ,k-1)$-polar, and such that for every vertex
$v$ of $H$, $H-v$ is either $(1,k)$- or
$(\infty, k-1)$-polar.   Note that $H$ contains a
cograph minimal $(\infty, k-1)$-polar obstruction $H'$
as an induced subgraph, but from Theorem
\ref{theo:ConnectedObsmin}, $H'$ can not be connected,
or $H'$ would be a proper induced subgraph of $G$ that
is not an $(\infty, k)$-polar graph, an absurd.   Thus,
$H'$ must be disconnected, and from Remark
\ref{rem:K1OplusH}, $K_1 \oplus H' \le H$.   But in such
a case $G$ contains the cograph minimal
$(\infty, k)$-polar obstruction $K_1 + (K_1 \oplus H')$
as an induced subgraph, so
$G \cong K_1 + (K_1 \oplus H')$.
\end{proof}

Notice that, by Lemma \ref{lem:Obsmin(k-j+1, k-j)},
a graph $G$ is a cograph minimal $(\infty, k)$-polar
obstruction of type $(k+2, k+1)$ if and only if
$G \cong (k+1)K_1 + H$, where $H$ is a connected
non-complete cograph minimal $(1, k)$-polar
obstruction that is a complete multipartite graph.
Moreover, from Theorem \ref{theo:Charact(1,k)PO},
for any integer $k$, $k \ge 2$, the only cograph
minimal $(1,k)$-polar obstructions that are
complete multipartite graphs are $K_{k+1, k+1}$ and
$K_1 \oplus C_4$.   Thus, the following result follows
immediately from Lemma \ref{lem:Obsmin(k-j+1, k-j)}.

\begin{theorem} 
\label{cor:ObsminType(k+2,k+1)}
Let $k$ be an integer, $k \ge 2$.   Thus, $G$ is a cograph
minimal $(\infty, k)$-polar obstruction of type $(k+2,k+1)$
if and only of $G \cong (k+1)K_1 + H$, where $H$ is
isomorphic to $K_{k+1,k+1}$ or to $K_1 \oplus C_4$.
\end{theorem}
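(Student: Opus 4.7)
The plan is to apply Lemma \ref{lem:Obsmin(k-j+1, k-j)} with the extremal choice $j=-1$, so that the type $(k-j+1,k-j)$ specializes to $(k+2,k+1)$; the hypothesis $0 \le j+1 \le k$ then reads $0\le 0\le k$, which is satisfied since $k\ge 2$. The lemma yields the decomposition $G \cong (k+1)K_1 + H$ for a connected non-complete cograph $H$ that is (i) not $(1,k)$-polar, (ii) not $(\infty,-1)$-polar (vacuous, as no nonempty graph can be $(\infty,-1)$-polar), (iii) $(\infty,0)$-polar, i.e., complete multipartite, and (iv) such that $H-v$ is $(1,k)$-polar for every vertex $v$ (again using the vacuousness of the $(\infty,-1)$-polar alternative).

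The next step is to recognize that items (i) and (iv) together say that $H$ is a cograph minimal $(1,k)$-polar obstruction, while item (iii) says $H$ is complete multipartite, equivalently $\overline{P_3}$-free. Hence I only need to single out the connected, non-complete, $\overline{P_3}$-free members of the family of minimal $(1,k)$-polar obstructions. By Theorem \ref{theo:Charact(1,k)PO} (together with the graphs from Theorem \ref{theo:Charact(1,infty)PO} that it incorporates), this family consists of the seven connected cographs $K_1\oplus C_4$, $K_2\oplus 2K_2$, $\overline{2P_3}$, $K_1\oplus (K_2+P_3)$, $K_{k+1,k+1}$, $\overline{K_2}\oplus(K_2+kK_1)$, and $K_1\oplus(2K_2+(k-1)K_1)$. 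A short case-check, in each case exhibiting an induced $K_2+K_1$ (for instance, an edge of a $2K_2$ summand together with a vertex from $kK_1$, or an edge of the $K_2$ summand together with a non-adjacent endpoint of $P_3$, and the straightforward check that $\overline{2P_3}$ contains an induced $\overline{P_3}$ on the three vertices of a single $P_3$), rules out all of them except $K_1\oplus C_4\cong K_{1,2,2}$ and $K_{k+1,k+1}$, which are manifestly complete multipartite.

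For the converse direction, I would verify that both $K_1\oplus C_4$ and $K_{k+1,k+1}$ satisfy the four conditions above: both are connected, non-complete, complete multipartite, and by Theorem \ref{theo:Charact(1,k)PO} they are connected cograph minimal $(1,k)$-polar obstructions. Feeding them back into Lemma \ref{lem:Obsmin(k-j+1, k-j)} with $j=-1$ produces precisely the two graphs $(k+1)K_1 + (K_1\oplus C_4)$ and $(k+1)K_1 + K_{k+1,k+1}$, establishing the equivalence. There is no real obstacle in this proof; the one substantive step is the finite $\overline{P_3}$-check in the middle paragraph, and it is entirely routine given the short explicit list of candidates provided by the previous classification theorems.
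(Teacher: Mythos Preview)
Your proof is correct and follows essentially the same route as the paper: apply Lemma~\ref{lem:Obsmin(k-j+1, k-j)} with $j=-1$ to reduce to identifying the connected cograph minimal $(1,k)$-polar obstructions that are complete multipartite, and then invoke Theorem~\ref{theo:Charact(1,k)PO} to single out $K_{k+1,k+1}$ and $K_1\oplus C_4$. The paper states this argument in a short paragraph preceding the theorem, leaving the $\overline{P_3}$-check implicit, whereas you spell it out explicitly; no substantive difference.
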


Unfortunately, obtaining explicit lists of disconnected
cograph minimal $(\infty ,k)$-polar obstructions with
precisely one non-trivial component is a very difficult
task. As we have shown above, a simple recursive
construction of $(2,1)$- and $(k+2, k+1)$-type
obstructions is possible, but as we show below, it is
not enough for covering the general case.

The following three lemmas are auxiliary
results that will be the cornerstone for
obtaining explicit lists of cograph minimal
$(\infty, k)$-polar obstructions of types
$(k+1,k)$ and $(k, k-1)$.   The three of
them are based on the same proof technique:
we consider all the distinct ways in which a
connected cograph can be generated from
another connected cograph whose cotree has
specific characteristics.   For the sake of
length, we only sketch the proof of the first
proposition, the other two are very similar.

\begin{lemma} 
\label{lem:ExtendCompleteMultipartite1}
Let $k$ be an integer, $k \ge 2$, and let $H$
be a complete multipartite graph with at least
two parts and such that each part has at least
$k+1$ vertices.    If $H'$ is a connected cograph
of order $V_{H} +1$ that contains $H$ as an induced
subgraph, then exactly one of the following sentences
is satisfied:
\begin{enumerate}
	\item $H'$ is a complete multipartite graph with at
		least two parts and such that each part has at least
		$k+1$ vertices.

	\item $H'$ contains $\overline{K_2} \oplus (K_2 + kK_1)$
		or $K_1 \oplus C_4$ as an induced subgraph.
\end{enumerate}
\end{lemma}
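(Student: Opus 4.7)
The plan is to analyze how the single new vertex $v \in V_{H'} \setminus V_{H}$ attaches to the parts $V_1, \dots, V_t$ of $H$, and to use the $P_4$-freeness and connectedness of $H'$ to force one of the two items in the dichotomy. For each $i$, I set $N_i := N_{H'}(v) \cap V_i$ and call $V_i$ \emph{mixed} if $\varnothing \neq N_i \neq V_i$. The first step will be the structural claim that at most one part is mixed, and that if $V_{i_0}$ is mixed then $V_j \subseteq N_{H'}(v)$ for every $j \neq i_0$: if $u \in N_{i_0}$, $w \in V_{i_0} \setminus N_{i_0}$, and some $x \in V_j \setminus N_{H'}(v)$ with $j \neq i_0$ existed, the four vertices $v, u, x, w$ would induce a $P_4$ in $H'$ (edges $vu, ux, xw$; non-edges $vx, vw, uw$). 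Applied symmetrically, this also rules out two simultaneously mixed parts.

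Next I introduce $I := \{i : V_i \subseteq N_{H'}(v)\}$ and $J := \{i : V_i \cap N_{H'}(v) = \varnothing\}$; connectedness of $H'$ forces $I \neq \varnothing$. In the unmixed case $\{1,\dots,t\} = I \cup J$, I will examine $|J|$. If $J = \varnothing$, then $H' \cong K_1 \oplus H$, and since $H$ contains $C_4$ (two vertices from each of any two parts), $H'$ contains $K_1 \oplus C_4$. If $|J| = 1$, say $J = \{j_0\}$, then $v$ can be absorbed into $V_{j_0}$ to give a complete multipartite presentation of $H'$ on $t$ parts, each still of size $\geq k+1$, which is item 1. If $|J| \geq 2$, then $t \geq 3$ and $H$ itself already contains $K_1 \oplus C_4$ (put the $K_1$ in a part of $I$ and stretch the $C_4$ across two parts of $J$), yielding item 2.

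The remaining case is exactly one mixed part $V_{i_0}$. Writing $A := N_{i_0}$, $B := V_{i_0} \setminus N_{i_0}$ (both nonempty) and recalling $V_j \subseteq N_{H'}(v)$ for $j \neq i_0$, I first rule out item 1: a complete multipartite presentation of $H'$ would force $v$ into the same part as $B$ (they share the identical non-neighbourhood $B$), but $A \subseteq V_{i_0}$ is non-adjacent to $B$, so $A$ cannot live in a different part from $\{v\} \cup B$ either, contradicting $A \subseteq N_{H'}(v)$. To produce item 2: when $t \geq 3$ the same $K_1 \oplus C_4$ as before sits inside $H$, so I may assume $t = 2$ with $V_{i_0} = V_1$, and split by $|A|$. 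If $|A| \geq 2$, picking $a_1, a_2 \in A$ and $v_2, v_2' \in V_2$ gives an induced $C_4$ whose vertices are all adjacent to $v$, hence a $K_1 \oplus C_4$. If $|A| = 1$, say $A = \{a\}$, then $|B| \geq k$, and for any two distinct $v_2, v_2' \in V_2$ (available since $|V_2| \geq k+1 \geq 3$), their common neighbourhood in $H'$ equals $\{v, a\} \cup B$ and induces $K_2 + |B|\,K_1$ (the unique edge being $va$, with $B$ completely isolated in this neighbourhood); joining with $\{v_2, v_2'\}$ realises the required $\overline{K_2} \oplus (K_2 + kK_1)$.

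The main obstacle, and essentially the only genuinely delicate step, is the $|A|=1$ subcase of the mixed scenario with $t=2$: it is the unique place where the quantitative hypothesis $|V_i| \geq k+1$ (rather than merely $|V_i| \geq 2$) is really used, in order to guarantee $|B| \geq k$, and where the second forbidden subgraph $\overline{K_2} \oplus (K_2 + kK_1)$ has to be produced. Everything else reduces to a uniform exploitation of $P_4$-freeness together with the elementary observation that a complete multipartite graph on at least three parts each of size at least two automatically contains $K_1 \oplus C_4$.
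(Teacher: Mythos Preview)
Your proof is correct. The paper reaches the same five-way case split but via the cotree: since $H$ is complete multipartite with every part of size at least $k+1$, its cotree is a depth-two tree with $1$-labelled root whose children each carry at least $k+1$ leaves, and the paper enumerates the combinatorially distinct ways to insert a single new leaf while keeping a valid cotree. Translated back to graph operations these are: add a universal vertex, add a false twin, add a true twin, add a vertex adjacent to a proper subset of size at least two of one part (and to all other parts), or add a vertex non-adjacent to at least two full parts. These correspond one-to-one with your branches $J=\varnothing$, $|J|=1$, mixed with $|A|=1$, mixed with $|A|\ge 2$, and $|J|\ge 2$; you arrive at the identical split directly from $P_4$-freeness through the ``at most one mixed part, and then every other part is fully absorbed'' observation, bypassing the cotree formalism. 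Your route is more self-contained and pinpoints exactly where the hypothesis $|V_i|\ge k+1$ is genuinely needed (only in the $t=2$, $|A|=1$ subcase, to force $|B|\ge k$), whereas the cotree enumeration is more mechanical and adapts uniformly to the companion Lemmas~\ref{lem:ExtendCompleteMultipartite2} and~\ref{lem:ExtendCograph}. One aside worth recording: neither your argument nor the paper's sketch verifies the word ``exactly'' in every branch, and indeed when $t\ge 3$ and $|J|=1$ both items hold simultaneously (item~1 by absorbing $v$ into the unique part of $J$, item~2 because any three parts of $H$ already yield $K_1\oplus C_4$); only ``at least one'' is actually established, which is all that the application in Corollary~\ref{cor:ObsminType(k+1,k)} uses.
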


\begin{proof}
Let $k,H$ and $H'$ be as in the hypotheses.

Note that the cotree of $H$ is a rooted tree
$(T,r)$ of height two, such that $r$ is labeled 1,
$r$ has at least two children, and each of them is
the parent of at least $k+1$ leaves.   Then, by the
properties of cotrees, the cotree of $H'$ is
a rooted tree $(T',r)$ with exactly one more leaf
than $T$, that contains $T$ as an induced tree.

It can be verified that a tree $T'$ as
described above is necessarily the result of one
of the following modifications on $T$:
(a) adding a new leaf $x$ as a child of $r$,
(b) adding a new leaf $x$ as a child of a child of
    $r$,
(c) for a child $c$ of $r$, deleting a child $\ell$
    of $c$, adding a child $c'$ to $c$, and adding
    to $c'$ the leave $\ell$ and a new leaf $x$,
(d) for a child $c$ of $r$ with $t$ children, and
    $s\in\{2,\dots,t-1\}$, deleting the children
    $\ell_1, \dots, \ell_s$ of $c$, adding a child
    $c'$ to $c$, adding a new leaf $x$ as a child
    of $c'$, adding a child $c''$ to $c'$, and
    adding the leaves $\ell_1, \dots, \ell_s$ as
    children of $c''$, or
(e) supposing $r$ has $t$ children, for an integer
    $s\in\{2,\ldots,t-1\}$, deleting the children
    $c_1,\ldots,c_s$ of $r$ (each with its own
    children), adding a child $c'$ to $r$, adding
    a new leaf $x$ to $c'$, adding a child $c''$
    to $c$, and adding the vertices $c_1,\ldots,c_s$
    (with its children) as children of $c''$.

It is straightforward to corroborate that such
modifications on $T$ correspond to the following
modifications on $H$:

\begin{enumerate}[label=\alph*.]
	\item Add a universal vertex to $H$.
	\item Add a false twin to a vertex of $H$.
	\item Add a true twin to a vertex of $H$.
	\item Add a vertex $v$ to $H$ in such a way that $v$
		is completely adjacent to every part of $H$,
		except for a part $P$, and $v$ is adjacent to at
		least two vertices in $P$ but it is not adjacent
		to every vertex of $P$.
	\item Add a vertex $v$ to $H$ in such a way that $v$
		is completely non-adjacent to at least two parts
		of $H$, and it is completely adjacent to at least
		one part of $H$.
\end{enumerate}

Then, if $H'$ corresponds to the operation described in c,
then $H$ has $\overline{K_2} \oplus (K_2 + kK_1)$ as a
proper induced subgraph, while if $H$ corresponds to an
operation described in items a, d, or e, then $H'$
contains $K_1 \oplus C_4$ as an induced subgraph, and
if $H'$ is obtained from the operation described in
item b, then $H$ is a complete multipartite graph with
at least two parts and such that each part contains at
least $k+1$ vertices.
\end{proof}

\begin{lemma} 
\label{lem:ExtendCompleteMultipartite2}
Let $H$ be a complete multipartite graph with at
least three parts and such that at least two of
them have more than one vertex.   If $H'$ is a
connected cograph obtained by adding a new vertex
to $H$, then exactly one of the following
conditions is satisfied:
\begin{enumerate}
	\item $H'$ is a complete multipartite graph.
	\item $H'$ contains, as an induced subgraph,
		at least one of the following cographs: $K_1
		\oplus (K_1 + C_4), \overline{K_2}\oplus (K_1
		+ P_3)$, or $K_1 \oplus (\overline{P_3 + K_2})$.
\end{enumerate}
\end{lemma}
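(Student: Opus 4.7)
The plan is to mirror the proof of Lemma \ref{lem:ExtendCompleteMultipartite1}, by analysing the cotree of $H$ and classifying the five operations that insert a single leaf into it to produce the cotree of $H'$. The root $r$ of the cotree of $H$ is labelled $1$ and has at least three children; the hypothesis ensures that at least two of these children are internal $0$-labelled nodes with at least two leaf children each (corresponding to the distinguished parts $P_1,P_2$ of $H$ with $|P_1|,|P_2|\ge 2$). As before, the five cotree modifications translate into the graph operations: (a)~add a universal vertex; (b)~add a false twin of some $v\in V_H$; (c)~add a true twin of some $v\in V_H$ (necessarily in a part of size $\ge 2$); (d)~add a vertex $x$ adjacent to every part except one part $P$ of size $\ge 3$, in which $x$ is adjacent to at least two but not all vertices; and (e)~add a vertex $x$ completely non-adjacent to at least two parts of $H$ and completely adjacent to at least one part.

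Operations (a) and (b) keep $H'$ complete multipartite, giving item~1: in (a), $\{x\}$ becomes a new singleton part, while in (b) the vertex $x$ enlarges the part containing $v$. In the remaining cases I shall exhibit one of the three forbidden induced subgraphs by choosing a specific sextet. For operation (c), relabelling so that the twinned vertex $v$ lies in $P_1$, I pick $w\in P_1\setminus\{v\}$, $y_1,y_2\in P_2$, and $u$ from a third part of $H$, and verify that $\{u,w,v,x,y_1,y_2\}$ induces $K_1\oplus\overline{P_3+K_2}$ (with $u$ universal and the only non-edges inside the remaining five being $vw$, $xw$, and $y_1y_2$). For operation (d), letting $y_1,y_2\in P$ be neighbours of $x$, $y_3\in P$ a non-neighbour, and $u_1,u_2$ two distinct vertices of another part of size $\ge 2$ (which must exist, since at most one of $P_1,P_2$ coincides with $P$), I show that $\{u_1,u_2,x,y_1,y_2,y_3\}$ induces $\overline{K_2}\oplus(K_1+P_3)$, with $y_3$ as the $K_1$ and $x$ as the middle vertex of the $P_3$.

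Operation (e) is split into three subcases, depending on how the distinguished parts $P_1,P_2$ are distributed between the parts ignored by $x$ and the parts connected to $x$. If both are ignored, a sextet formed by two vertices from each of $P_1,P_2$, together with $x$ and a vertex from any connected part, induces $K_1\oplus(K_1+C_4)$. If both are connected (which forces $r\ge 4$), a sextet formed by one vertex from each of two ignored parts, two vertices of $P_1$, one vertex of $P_2$, and $x$ induces $K_1\oplus\overline{P_3+K_2}$. Finally, if exactly one, say $P_1$, is ignored, then two vertices from each of $P_1,P_2$, together with $x$ and a vertex from a second ignored part, induce $\overline{K_2}\oplus(K_1+P_3)$. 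The main obstacle is the bookkeeping in (e): one must check that these three subcases are exhaustive under the hypothesis and that the condition $|P_1|,|P_2|\ge 2$ always supplies the extra multi-vertex part needed to furnish the $\overline{K_2}$ or the $C_4$. Once the sextets are chosen, each identification of the induced subgraph reduces to a routine edge-count that I shall carry out explicitly.
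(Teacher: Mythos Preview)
Your proposal is correct and follows exactly the approach the paper intends: the paper explicitly omits the proof of this lemma, stating that it is ``very similar'' to that of Lemma~\ref{lem:ExtendCompleteMultipartite1}, and your argument faithfully carries out that cotree case analysis. The five graph-level operations you list are exhaustive for connected cograph extensions of a complete multipartite graph (at most one part can be ``mixed'' by $P_4$-freeness), operations (a) and (b) preserve complete multipartiteness, and your explicit sextets in (c), (d), and each subcase of (e) check out; the mutual exclusivity of items~1 and~2 follows since each of the three listed cographs contains $\overline{P_3}$.
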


\begin{lemma} 
\label{lem:ExtendCograph}
Let $k$ be an integer, $k \ge 3$, and let $H$ be a
connected $(1,k)$-polar cograph that contains
$K_1 \oplus (2K_2 + K_1)$ as an induced subgraph.
If $H'$ is a connected cograph obtained by adding a new
vertex to $H$, then some of the following sentences is
satisfied:
\begin{enumerate}
	\item $H'$ is a $(1,k)$-polar cograph.

	\item $H'$ contains some of the following cographs as an
		induced subgraph: $K_1 \oplus (2K_2+ (k-1)K_1),
		K_2 \oplus (2K_2 + K_1),
		K_1 \oplus (P_3 + \overline{P_3}),
		K_1 \oplus (K_2 + \overline{K_1 + P_3})$, or
		$K_1 \oplus (K_1 + (K_1 \oplus 2K_2))$.
\end{enumerate}
\end{lemma}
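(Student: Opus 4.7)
The plan is to mirror the cotree-analysis technique used in the proof of Lemma \ref{lem:ExtendCompleteMultipartite1}. Since $H$ is a connected cograph, its cotree $(T,r)$ has root $r$ labelled with $1$ (a join node), and since $H'$ is a connected cograph with exactly one more vertex than $H$, its cotree $(T',r')$ also has root labelled $1$ and is obtained from $T$ by inserting a single leaf. The five local modifications listed in the proof of Lemma \ref{lem:ExtendCompleteMultipartite1} (items (a) through (e)) still enumerate all possibilities; they correspond, respectively, to adding the new vertex $x$ as a universal vertex of $H$, as a false twin of some vertex of $H$, as a true twin of some vertex of $H$, and (in items (d) and (e)) as a vertex adjacent to some but not all of the children of a fixed internal node of $T$, according to two specific patterns.

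The second step is to carry out a case analysis on these five modifications, using the fixed copy $F := K_1\oplus(2K_2+K_1)$ of the obstruction guaranteed to sit inside $H$. Label its universal vertex $u$, its two independent edges $v_1v_2$ and $w_1w_2$, and its isolated vertex $z$. In each case I would determine the adjacency pattern of $x$ with respect to $\{u,v_1,v_2,w_1,w_2,z\}$ and decide whether $\{x\}\cup V(F)$ (possibly together with one or two extra vertices supplied by the connectedness or by a $(1,k)$-polar partition of $H$) already contains one of the five forbidden induced subgraphs listed in the statement. If it does, we are done; otherwise, I would show that every $(1,k)$-polar partition of $H$ extends to a $(1,k)$-polar partition of $H'$ by placing $x$ on the appropriate side (in the independent part if the neighbourhood of $x$ inside $H$ lies in a single clique of the partition, or inside a clique of the partition otherwise).

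To illustrate, in case (a) the set $\{x,u,v_1,v_2,w_1,w_2,z\}$ already induces $K_2\oplus(2K_2+K_1)$, giving conclusion~2 directly. In case (c), attaching $x$ as a true twin of $u$ reduces to case (a); attaching $x$ as a true twin of $z$ transforms $2K_2+K_1$ into $3K_2$ and, combined with an additional independent vertex supplied by the $(1,k)$-polar structure of $H$, produces $K_1\oplus(K_1+(K_1\oplus 2K_2))$; attaching $x$ as a true twin of some $v_i$ or $w_i$ creates a triangle, and $\{u,x,v_1,v_2,w_1,w_2,z\}$ then contains $K_1\oplus(P_3+\overline{P_3})$. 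The false-twin cases and modifications (d), (e) are handled analogously, yielding $K_1\oplus(K_2+\overline{K_1+P_3})$, $K_2\oplus(2K_2+K_1)$, or $K_1\oplus(2K_2+(k-1)K_1)$ whenever the attachment is not compatible with a $(1,k)$-polar partition. The hypothesis $k\geq 3$ is used precisely when extracting the last obstruction, since it is exactly the number of independent vertices that the $(1,k)$-polar structure of $H$ makes available beside the copy of $K_1\oplus 2K_2$.

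The main obstacle is the combinatorial bookkeeping: for each of the five cotree modifications I must carefully enumerate the possible adjacencies of $x$ with the prescribed copy of $F$ and, in the subcases where no forbidden subgraph appears, exhibit an explicit $(1,k)$-polar partition of $H'$ extending one of $H$. As in the proof of Lemma \ref{lem:ExtendCompleteMultipartite1}, I would present only the critical configurations and leave the routine verifications to the reader. Theorem \ref{theo:Charact(1,k)PO} will be invoked to recognise the listed connected $(1,k-1)$-polar obstructions when they arise.
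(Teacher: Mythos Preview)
Your overall strategy---enumerate the cotree modifications that insert one leaf and, for each, either locate one of the five listed induced subgraphs or extend a $(1,k)$-polar partition of $H$---is exactly the technique the paper declares it uses (it proves Lemma~\ref{lem:ExtendCompleteMultipartite1} and says Lemmas~\ref{lem:ExtendCompleteMultipartite2} and~\ref{lem:ExtendCograph} are ``very similar''). So the approach matches.

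There is, however, a genuine gap in your case enumeration. The five modifications (a)--(e) in the proof of Lemma~\ref{lem:ExtendCompleteMultipartite1} are specific to a cotree of height two (the cotree of a complete multipartite graph): they describe the only places one can attach a new leaf \emph{in that particular tree}. In Lemma~\ref{lem:ExtendCograph} the hypothesis on $H$ is much weaker---$H$ is an arbitrary connected $(1,k)$-polar cograph containing $K_1\oplus(2K_2+K_1)$---so its cotree may have arbitrary depth, and a new leaf can be attached (or a node split) at any level. Your reinterpretation of (a)--(e) as ``universal vertex / false twin / true twin / two partial-adjacency patterns'' is neither the same list nor exhaustive for general cotrees; you must redo the enumeration, e.g.\ by classifying the possible neighbourhoods of $x$ inside the fixed copy $F$ (which, since $H'$ is $P_4$-free, are heavily constrained), or by the position of the new leaf relative to the subtree of $T$ spanned by $V(F)$. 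A second, smaller issue: in your case~(c) illustration, making $x$ a true twin of $z$ yields $K_1\oplus 3K_2$ on $\{u,v_1,v_2,w_1,w_2,z,x\}$; the ``additional independent vertex supplied by the $(1,k)$-polar structure'' need not exist (take $H=F$), and in fact $K_1\oplus 3K_2$ is $(1,k)$-polar for $k\ge 3$, so conclusion~1, not conclusion~2, is what holds there. These are fixable, but the case split must be rebuilt from scratch for a general cotree rather than borrowed verbatim from Lemma~\ref{lem:ExtendCompleteMultipartite1}.
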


Now we are ready to give explicit lists of
cograph minimal $(\infty, k)$-polar obstructions
of types $(k+1, k)$ and $(k, k-1)$.   As we have
mentioned above, these lists are directly based
on the previous lemmas.

\begin{corollary} 
\label{cor:ObsminType(k+1,k)}
Let $k$ be an integer, $k \ge 2$.   Then, $G$ is a cograph
minimal $(\infty, k)$-polar obstruction of type $(k+1, k)$
if and only if $G \cong kK_1 + H$, where $H$ is isomorphic
to some cograph of the set:
\[
\{\overline{2P_3},
(P_3 + K_2) \oplus K_1,
2K_2 \oplus K_2,
K_1 \oplus (C_4 + K_1),
K_1 \oplus \overline{P_3 + K_2},
\overline{K_2} \oplus (P_3 + K_1),
\overline{K_2} \oplus (K_2 + kK_1)\}.
\]
\end{corollary}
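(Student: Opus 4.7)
The plan is to invoke Lemma \ref{lem:Obsmin(k-j+1, k-j)} with $j=0$, which reduces the corollary to classifying the connected non-complete cographs $H$ satisfying (1) $H$ is not $(1,k)$-polar, (2) $H$ is not complete multipartite, (3) $H$ is $(\infty,1)$-polar, and (4) $H-v$ is $(1,k)$-polar or complete multipartite for every vertex $v$. The ``if'' direction is then a routine verification of (1)--(4) for each of the seven listed graphs, streamlined by observing that $\overline{2P_3}$, $2K_2\oplus K_2$, and $(P_3+K_2)\oplus K_1$ are connected cograph minimal $(1,\infty)$-polar obstructions (Theorem \ref{theo:Charact(1,infty)PO}), and $\overline{K_2}\oplus(K_2+kK_1)$ is one of the new connected cograph minimal $(1,k)$-polar obstructions of Theorem \ref{theo:Charact(1,k)PO}; for each of the seven graphs I would write down an explicit $(\infty,1)$-polar partition to confirm (3) and check (4) vertex-by-vertex using the cograph structure.

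For the converse direction, since $H$ is not $(1,k)$-polar, it contains a connected cograph minimal $(1,k)$-polar obstruction $H^*$ (using Theorems \ref{theo:Charact(1,infty)PO} and \ref{theo:Charact(1,k)PO}). The key reusable argument is: if $H^*$ is not complete multipartite and $H\neq H^*$, then picking $v\in V(H)\setminus V(H^*)$ gives $H-v\supseteq H^*$, so $H-v$ is not $(1,k)$-polar, forcing $H-v$ complete multipartite by (4); but then the non-complete-multipartite graph $H^*$ embeds in a complete multipartite graph, a contradiction. Hence $H=H^*$ whenever $H^*$ is non-complete-multipartite, so in this case I enumerate the connected non-complete-multipartite minimal $(1,k)$-polar obstructions ($K_2\oplus 2K_2$, $\overline{2P_3}$, $K_1\oplus(K_2+P_3)$, $\overline{K_2}\oplus(K_2+kK_1)$, $K_1\oplus(2K_2+(k-1)K_1)$) and check which satisfy (3): the first four do, the last one does not (it admits no $(\infty,1)$-polar partition).

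If instead $H^*$ is complete multipartite, then $H^*\in\{K_{k+1,k+1},\,K_1\oplus C_4\}$, $H$ properly contains $H^*$, and $H-v$ is complete multipartite for all $v\notin V(H^*)$. When $H^*=K_{k+1,k+1}$ the hypothesis of Lemma \ref{lem:ExtendCompleteMultipartite1} applies to $H-v$ (since $H-v$ must contain $K_{k+1,k+1}$ as induced subgraph, its parts inherit sizes $\geq k+1$), and the lemma forces $H$ to contain $\overline{K_2}\oplus(K_2+kK_1)$ or $K_1\oplus C_4$. The first option reduces via the Case~I argument to $H=\overline{K_2}\oplus(K_2+kK_1)$, which a vertex-count check rules out as a supergraph of $K_{k+1,k+1}$; the second option reduces to the next subcase. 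When $H^*=K_1\oplus C_4=K_{1,2,2}$ the hypothesis of Lemma \ref{lem:ExtendCompleteMultipartite2} is met, and the lemma forces $H$ to contain one of $K_1\oplus(C_4+K_1)$, $\overline{K_2}\oplus(P_3+K_1)$, or $K_1\oplus\overline{P_3+K_2}$; each of these is non-complete-multipartite, so by the Case~I argument $H$ equals whichever candidate it contains.

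The main obstacle I anticipate is the careful bookkeeping in the complete-multipartite subcase, in particular verifying that the forbidden subgraphs produced by Lemmas \ref{lem:ExtendCompleteMultipartite1} and \ref{lem:ExtendCompleteMultipartite2} survive the minimality filter to yield exactly the listed graphs and no others, and the explicit computation that $K_1\oplus(2K_2+(k-1)K_1)$ fails (3) while all seven listed graphs satisfy it. The recursive application of the extension lemmas (first with $K_{k+1,k+1}$, then reducing to $K_1\oplus C_4$) must be set up so that no possibility is missed and no spurious candidate survives.
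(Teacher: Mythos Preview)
Your approach is essentially the paper's: both invoke Lemma \ref{lem:Obsmin(k-j+1, k-j)} with $j=0$, locate a connected cograph minimal $(1,k)$-polar obstruction inside $H$, use the same ``if $H$ contains a non-complete-multipartite $(1,k)$-obstruction then $H$ equals it'' argument, and finish the complete-multipartite cases with Lemmas \ref{lem:ExtendCompleteMultipartite1} and \ref{lem:ExtendCompleteMultipartite2}. One cosmetic difference: to rule out $K_1\oplus(2K_2+(k-1)K_1)$ the paper observes that $(k-1)K_1+K_1\oplus(2K_2+(k-1)K_1)$ is already a minimal $(\infty,k)$-polar obstruction (so $G$ would properly contain it), whereas you check directly that this graph fails the $(\infty,1)$-polarity condition; both are valid.

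There is, however, a genuine gap in your $K_{k+1,k+1}$ subcase. You claim that $H-v$ ``inherits parts of size $\ge k+1$'' from the copy of $K_{k+1,k+1}$ it contains, and hence satisfies the hypothesis of Lemma \ref{lem:ExtendCompleteMultipartite1}. That inference is wrong: a complete multipartite graph containing $K_{k+1,k+1}$ is only guaranteed \emph{two} parts of size $\ge k+1$; it may have additional smaller parts, in which case Lemma \ref{lem:ExtendCompleteMultipartite1} does not apply to the pair $(H-v,H)$. The fix is easy: if $H-v$ has exactly two parts, both indeed have size $\ge k+1$ and your argument goes through; if $H-v$ has three or more parts, then (since two of them have size $\ge 2$) $H-v$ already contains $K_{1,2,2}=K_1\oplus C_4$, and you fall directly into your next subcase. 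The paper sidesteps this by applying Lemma \ref{lem:ExtendCompleteMultipartite1} \emph{iteratively}, extending one vertex at a time from $K_{k+1,k+1}$ toward $H$, so the hypothesis (all parts $\ge k+1$) is maintained at every step until a forbidden subgraph appears.

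A smaller point: you assert without comment that the obstruction $H^*$ inside $H$ may be taken connected. This needs the paper's observation that every \emph{disconnected} cograph minimal $(1,k)$-polar obstruction is non-complete-multipartite, so your Case I argument already forces $H$ to equal it, contradicting connectedness of $H$. Also, when you invoke ``the Case I argument'' for the three graphs produced by Lemma \ref{lem:ExtendCompleteMultipartite2}, note that these are not themselves minimal $(1,k)$-polar obstructions; the argument still works because each contains $K_1\oplus C_4$ (hence is not $(1,k)$-polar) and is non-complete-multipartite, but you should say so explicitly.
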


\begin{proof}
By Lemma \ref{lem:Obsmin(k-j+1, k-j)}, it is routine
to verify that if $H$ is isomorphic to some of the
listed graphs, then $kK_1 + H$ is a cograph
minimal $(\infty, k)$-polar obstruction.   For the
converse, let us consider $G$, a cograph
minimal $(\infty, k)$-polar obstruction of type
$(k+1, k)$. By Lemma \ref{lem:Obsmin(k-j+1, k-j)}
we have that $G \cong kK_1 + H$, where $H$ is a
connected non-complete cograph that contains a
cograph minimal $(1,k)$-polar obstruction $H'$ as
an induced subgraph, and such that for each vertex
$v \in V_{H}$, $H-v$ is either a $(1,k)$-polar graph
or a complete	multipartite graph.   In addition,
by Theorem \ref{theo:Charact(1,k)PO} we know that
every disconnected cograph minimal $(1,k)$-polar
obstruction is not a complete multipartite graph,
which implies from Remark \ref{rem:K1OplusH} that
$H'$ can not be disconnected.   Then, since $k \ge
2$, we have that $H'$ is either isomorphic to some
graph of the set $\{K_{k+1,k+1}, K_1 \oplus (2K_2
+ (k-1)K_1), \overline{K_2}\oplus(K_2 + kK_1)\}$,
or it is isomorphic to some $(1, \infty)$-polar
obstruction, that is, to some graph of the set
$\{K_1\oplus C_4, K_2\oplus 2K_2, \overline{2P_3},
K_1 \oplus (P_3 + K_2)\}$.

We observed at the beginning of this proof that if
$H'$ is isomorphic to $K_2\oplus 2K_2,
\overline{2P_3}, K_1 \oplus (P_3 + K_2)$ or
$\overline{K_2}\oplus(K_2 + kK_1)$, then $kK_1 + H'$ is
a cograph minimal $(\infty, k)$-polar obstruction, so
in this cases $H = H'$.
Furthermore, since $k \ge 2$, Lemma
\ref{lem:Obsmin(k-j+1, k-j)} implies that
$G' \cong (k-1)K_1 + K_1 \oplus (2K_2 +(k-1)K_1)$ is a
cograph minimal $(\infty, k)$-polar obstruction.   In
consequence, $H' \not\cong K_1 \oplus (2K_2 +(k-1)K_1)$,
or	$G$ would contain $G'$ as a proper induced subgraph,
a contradiction.

Thus, we have only two remaining cases,
$H' \cong K_1 \oplus C_4$, or $H'\cong K_{k+1, k+1}$.
Note that in both cases, $H'$ is a complete multipartite
graph, and by Lemma \ref{lem:Obsmin(k-j+1, k-j)}
$H$ is not a complete multipartite graph,
so $H'$ must be a proper induced subgraph of $H$.
Furthermore, by Theorem \ref{cor:ObsminType(k+2,k+1)},
in both cases, $G' \cong (k+1)K_1 + H'$ is a cograph
minimal $(\infty, k)$-polar obstruction, which implies
that for each vertex $v \in V_{H-H'}$, $v$ is adjacent
to some vertex of $H'$.

Suppose that $H' \cong K_1 \oplus C_4$.   As we have
mentioned before, it is straightforward to show that
$kK_1 + K_1 \oplus (C_4 + K_1)$,
$kK_1 + K_1 \oplus \overline{P_3 + K_2}$ and
$kK_1 + \overline{K_2} \oplus (P_3 + K_1)$ are all
cograph minimal $(\infty, k)$-polar obstructions, so, if
$H$ contains as an induced subgraph a graph $H^*$ that
is isomorphic to either $K_1 \oplus (C_4 + K_1),
K_1 \oplus \overline{P_3 + K_2}$, or
$\overline{K_2} \oplus (P_3 + K_1)$, then $kK_1 + H^*$
is a cograph minimal $(\infty, k)$-polar obstruction
contained as an induced subgraph in $G$, and then
$G \cong kK_1 + H^*$ and $H \cong H^*$.   Moreover,
from Lemma \ref{lem:ExtendCompleteMultipartite2}, $H$
must contain as an induced subgraph a graph $H^*$ as
described before, or $H$ would be a complete
multipartite graph, a contradiction.

For the last case, suppose that $H' \cong K_{k+1, k+1}$.
Then, from Lemma \ref{lem:ExtendCompleteMultipartite1}
and since $H$ is not a complete multipartite graph, $H$
either contains $K_1 \oplus C_4$ as an induced subgraph,
or it contains $\overline{K_2}\oplus(K_2 + kK_1)$ as a
proper induced subgraph.   Since we have already treated
both cases before, we conclude that the only cograph
minimal $(\infty, k)$-polar obstructions of type
$(k+1 ,k)$ are the listed one in the statement of
the corollary.
\end{proof}

\begin{corollary} 
\label{cor:ObsminType(k,k-1)}
Let $k$ be an integer, $k \ge 3$.   The graph
$G$ is a cograph minimal $(\infty, k)$-polar
obstruction of type $(k, k-1)$ if and only if
$G \cong (k-1)K_1 + H$, where $H$ is isomorphic
to some cograph of the set

$\{K_1 \oplus (C_4 + 2K_1),
K_1 \oplus 2P_3,
K_1 \oplus (K_1 + \overline{P_3 + K_2}),
K_1 \oplus (K_2 + \overline{P_3 + K_1}),
K_2 \oplus (K_1 + 2K_2),$
\hspace*{\fill}$K_1 \oplus (K_1 + K_2 + P_3),
K_1 \oplus (K_1 + (K_1 \oplus 2K_2)),
K_1 \oplus ((k-1)K_1 + 2K_2)
\}.$
\end{corollary}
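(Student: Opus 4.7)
The plan is to follow the template of Corollary \ref{cor:ObsminType(k+1,k)}, invoking Lemma \ref{lem:Obsmin(k-j+1, k-j)} at $j = 1$ so that the stated type $(k, k-1)$ matches. For the ``if'' direction, each of the eight listed cographs $H$ is to be checked for the four conditions of Lemma \ref{lem:Obsmin(k-j+1, k-j)}, namely that $H$ is connected, non-complete, not $(1, k)$-polar, not $(\infty, 1)$-polar, is $(\infty, 2)$-polar, and that every $H - v$ is either $(1, k)$- or $(\infty, 1)$-polar; this is a routine cotree inspection that yields the minimal obstruction $(k-1)K_1 + H$ in each case.

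For the converse, let $G$ be a cograph minimal $(\infty, k)$-polar obstruction of type $(k, k-1)$. Lemma \ref{lem:Obsmin(k-j+1, k-j)} gives $G \cong (k-1)K_1 + H$ with $H$ a connected non-complete cograph satisfying the same four conditions. Since $H$ is not $(\infty, 1)$-polar, it contains a cograph minimal $(\infty, 1)$-polar obstruction $H'$; by complementing Theorem \ref{theo:Charact(1,infty)PO} the candidates for $H'$ are exactly $K_1 + 2K_2$, $2K_1 + C_4$, $2P_3$, and $K_1 + \overline{P_3 + K_2}$, all of them disconnected. Remark \ref{rem:K1OplusH} then forces $K_1 \oplus H' \le H$. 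For $H' \in \{2K_1 + C_4,\, 2P_3,\, K_1 + \overline{P_3 + K_2}\}$, direct inspection confirms that $K_1 \oplus H'$ is not $(1, k)$-polar (the only independent set containing the universal vertex is that vertex alone, while $H'$ is not a cluster) and satisfies the remaining three conditions, so $(k-1)K_1 + (K_1 \oplus H')$ is itself a minimal obstruction; minimality of $G$ then yields $H = K_1 \oplus H'$, producing items 1, 2, and 3 of the list.

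The delicate case is $H' = K_1 + 2K_2$: here $K_1 \oplus (K_1 + 2K_2)$ is $(1, 3)$-polar (take $A$ to be its unique universal vertex and $B$ the remaining $K_1 + 2K_2$, a $3$-cluster), hence it is not itself a valid $H$, and so $H$ must properly contain $K_1 \oplus (K_1 + 2K_2)$. To enumerate the admissible one-vertex enlargements I plan to apply Lemma \ref{lem:ExtendCograph}, which is tailored to connected $(1, k)$-polar cographs containing $K_1 \oplus (2K_2 + K_1)$, supplemented by Lemmas \ref{lem:ExtendCompleteMultipartite1} and \ref{lem:ExtendCompleteMultipartite2} when the configuration becomes near complete-multipartite. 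Iteratively adding vertices and keeping those for which the four conditions of Lemma \ref{lem:Obsmin(k-j+1, k-j)} still hold should produce exactly items 4, 5, 6, 7, and the $k$-dependent family $K_1 \oplus ((k-1)K_1 + 2K_2)$ of item 8, the latter arising from repeatedly attaching isolated vertices inside the outer join.

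The main obstacle is precisely this exhaustive case analysis in the $H' = K_1 + 2K_2$ branch: one must confirm that every enlargement of $K_1 \oplus (K_1 + 2K_2)$ either coincides with one of the listed graphs, strictly contains one (contradicting minimality of $G$), or violates one of the four polarity conditions, and that the process terminates at exactly the displayed list. The argument is mechanical given the extension lemmas but involves substantial bookkeeping, in the same spirit as the proof of Corollary \ref{cor:ObsminType(k+1,k)}.
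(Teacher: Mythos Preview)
Your approach is essentially the same as the paper's. The one simplification the paper makes in the $H' = K_1 + 2K_2$ branch is that it invokes only Lemma~\ref{lem:ExtendCograph}, not Lemmas~\ref{lem:ExtendCompleteMultipartite1} or~\ref{lem:ExtendCompleteMultipartite2}, and tracks only the single condition that $H$ is not $(1,k)$-polar: since $K_1 \oplus (K_1 + 2K_2)$ is $(1,k)$-polar while $H$ is not, iterating Lemma~\ref{lem:ExtendCograph} along any vertex chain from the former to the latter forces $H$ to contain some $H^*$ from the five-graph list in that lemma (which is exactly items 4--8 of the corollary), and minimality of $G$ then gives $H = H^*$ with no further bookkeeping.
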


\begin{proof}
Based on Lemma \ref{lem:Obsmin(k-j+1, k-j)}, it is
routine to verify that if $H$ is isomorphic to some
of the listed graphs, then $(k-1)K_1 + H$ is a
cograph minimal $(\infty, k)$-polar obstruction.

Conversely, let $G$ be a cograph minimal
$(\infty, k)$-polar obstruction of type $(k, k-1)$.
By Lemma \ref{lem:Obsmin(k-j+1, k-j)} we have that
$G \cong (k-1)K_1 + H$, where $H$ is a connected
cograph that contains a cograph minimal
$(\infty, 1)$-polar obstruction $H'$ as an induced
subgraph.   Thus, from Theorem
\ref{theo:Charact(1,infty)PO} and Remark
\ref{rem:K1OplusH}, there exists a cograph $H'$
isomorphic to some graph in the set
\[
	\{K_1 \oplus (K_1 + 2K_2), \  K_1 \oplus (2K_1 + C_4),
	\ K_1 \oplus 2P_3,
	\ K_1 \oplus (K_1 + \overline{K_2 + P_3})\}
\]
contained as an induced subgraph of $H$. As we have
observed at the start of this proof, if $H'$ is isomorphic
to either $K_1 \oplus (2K_1 + C_4), K_1 \oplus 2P_3$
or $K_1 \oplus (K_1 + \overline{K_2 + P_3})$,
then $(k-1)K_1 + H'$ is a cograph minimal
$(\infty ,k)$-polar obstruction, and then $H = H'$.
Suppose then that $H' \cong K_1 \oplus (K_1 + 2K_2)$.
It follows from Lemma \ref{lem:Obsmin(k-j+1, k-j)} that
$H$ is not a $(1, k)$-polar graph, which implies from
Lemma \ref{lem:ExtendCograph} that $H$ contains a graph
$H^*$ in the set

$\{ K_1 \oplus (2K_2+ (k-1)K_1),
K_2 \oplus (2K_2 + K_1),
K_1 \oplus (P_3 + \overline{P_3}),
K_1 \oplus (K_2 + \overline{K_1 + P_3}),$

\hspace*{\fill}$K_1 \oplus (K_1 + (K_1 \oplus 2K_2))\}$

as an induced subgraph.   Since we have proved that in
every such case $(k-1)K_1 + H^*$ is a cograph minimal
$(\infty, k)$-polar obstruction, we have that $H = H^*$,
which finishes the proof.
\end{proof}

\section{The remaining types}
\label{sec:remainingTypes}

In contrast with the obstructions with precisely
one non-trivial component, we show in the following
proposition that cograph minimal $(\infty, k)$-polar
obstructions that have at least one trivial component,
and at least one complete non-trivial component, can
be nicely obtained from the cograph minimal $(\infty,
k-1)$-polar obstructions with at least one isolated
vertex.

\begin{theorem} 
\label{theo:(infty,k)obsminFrom(infty,k-1)obsmin}
Let $j, k$ and $p$ be non-negative integers such that
$1 \le p \le k-j$. The graph $G$ is a cograph minimal
$(\infty, k)$-polar obstruction of type $(k-j+2, p)$
if and only if $G \cong K_2 + G'$ where $G'$ is a cograph
minimal $(\infty, k-1)$-polar obstruction of type
$(k-j+1, p)$ which is a $(1,k)$-polar graph.
\end{theorem}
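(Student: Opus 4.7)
The plan is to establish both implications via a case analysis on how the two vertices of the $K_2$ component are placed in a hypothetical $(\infty, k)$-polar (resp.\ $(\infty, k-1)$-polar) partition. I begin with the structural remark that any cograph minimal $(\infty, k)$-polar obstruction $G$ of type $(k-j+2, p)$ with $1 \le p \le k-j$ can be written as $K_2 + G'$: the inequality $p \le k-j$ forces at least two non-trivial components, while $p \ge 1$ gives at least one trivial component, so by parts \ref{it:AtMostOneNoncompleteComponent} and \ref{it:OrderCompleteComponents} of Lemma \ref{lem:BasicsOf(Infty,k)PO} at least one non-trivial component must be complete and of order two, i.e., a $K_2$. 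Counting components and trivial components then shows that $G'$ has type $(k-j+1, p)$.

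For the forward implication, I would prove three facts about $G'$. \emph{(i)} $G'$ is not $(\infty, k-1)$-polar, since any $(\infty, k-1)$-polar partition $(A, B)$ of $G'$ would extend to the $(\infty, k)$-polar partition $(A, B \cup V_{K_2})$ of $G$. \emph{(ii)} For every vertex $v$ of $G'$, $G' - v$ is $(\infty, k-1)$-polar: apply the minimality of $G$ to obtain an $(\infty, k)$-polar partition $(A, B)$ of $G - v$ and split into cases according to how many of the two $K_2$-vertices $u, u'$ lie in $A$. If both lie in $A$, then because the $K_2$ component is isolated in $G - v$, the complete multipartite structure on $A$ forces $A = \{u, u'\}$ and $B = V_{G' - v}$, so $G' - v$ is a cluster with at most $k$ components, and promoting one clique to the $A$-side yields the desired $(\infty, k-1)$-polar partition. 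Otherwise, using that $u$ and $u'$ are adjacent only to each other, one argues that if, say, $u \in A$ and $u' \in B$, then $A$ must be an independent set, so moving $u$ into $B$ produces an equivalent $(\infty, k)$-polar partition of $G - v$ with both $K_2$-vertices in $B$ forming a single clique; dropping that clique yields an $(\infty, k-1)$-polar partition of $G' - v$. \emph{(iii)} $G'$ is $(1, k)$-polar: apply minimality of $G$ to a vertex $u$ of the $K_2$, so $G - u = K_1 + G'$ with the other vertex $u'$ isolated; in the resulting $(\infty, k)$-polar partition $(A, B)$ of $G - u$, $u'$ cannot lie in $B$ (else removing the clique $\{u'\}$ would contradict (i)), so $u' \in A$, and since $u'$ is isolated, the complete multipartite condition collapses $A$ to an independent set, yielding a $(1, k)$-polar partition of $G'$.

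For the converse, suppose $G \cong K_2 + G'$ with $G'$ as described; the type count is immediate. To see that $G$ is not $(\infty, k)$-polar, I repeat the case analysis on a hypothetical partition: if both $K_2$-vertices sit in $A$, then $G'$ must be a cluster with at most $k-j+1 \le k$ components, hence $(\infty, k-1)$-polar, contradicting the minimality of $G'$; if at most one sits in $A$, the same transformation produces an $(\infty, k-1)$-polar partition of $G'$, again a contradiction. For minimality, if $v$ is a vertex of the $K_2$, I use the $(1, k)$-polar partition $(A, B)$ of $G'$ and add the remaining $K_1$ to the independent part $A$; if $v$ is a vertex of $G'$, I take an $(\infty, k-1)$-polar partition $(A, B)$ of $G' - v$ (available by minimality of $G'$) and attach $V_{K_2}$ as a new clique to $B$, obtaining an $(\infty, k)$-polar partition of $G - v$.

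The main obstacle I anticipate is the bookkeeping in the partition-transformation argument, specifically the step that forces $A$ to be an independent set when exactly one of the two $K_2$-vertices lies in $A$: this is what allows one to safely migrate that vertex into $B$ without destroying the complete multipartite structure, and it rests crucially on the $K_2$ component being isolated from the rest of the graph. Once this observation is in place, the remainder of the proof amounts to direct extension or restriction of partitions between $G$, $G - v$, $G'$, and $G' - v$.
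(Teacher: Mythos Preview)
Your argument is correct. In the converse direction it essentially matches the paper, and in fact your three-case analysis on the placement of the $K_2$ vertices fills in a step the paper leaves implicit (the paper simply asserts that ``since $G'$ is not an $(\infty,k-1)$-polar graph, $G$ is not an $(\infty,k)$-polar graph'').

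Where you take a genuinely different route is in the forward direction, specifically in establishing that $G'$ is a \emph{minimal} $(\infty,k-1)$-polar obstruction. You prove this directly via your step (ii): for each $v\in V_{G'}$ you manufacture an $(\infty,k-1)$-polar partition of $G'-v$ out of the given $(\infty,k)$-polar partition of $G-v$, by the same case split on where $u,u'$ sit. The paper instead argues indirectly: having shown that $G'$ is not $(\infty,k-1)$-polar but is $(1,k)$-polar, it picks a cograph minimal $(\infty,k-1)$-polar obstruction $G^*\le G'$ (automatically $(1,k)$-polar by heredity), applies the already-proved converse to conclude that $K_2+G^*$ is a cograph minimal $(\infty,k)$-polar obstruction contained in $G$, and invokes the minimality of $G$ to force $G'=G^*$. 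The paper's sandwich argument is shorter and avoids repeating the case analysis; your direct verification is more explicit and self-contained.

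One small remark on your converse: the inequality $k-j+1\le k$ you invoke when both $K_2$-vertices lie in $A$ is not part of the hypotheses (it fails for $j=0$). The conclusion is still fine, since the real point is that $G'=G[B]$ is a $k$-cluster, and any nonempty $k$-cluster is $(\infty,k-1)$-polar by promoting one clique to the $A$-side; you may want to phrase that step without reference to $k-j+1$.
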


\begin{proof}
Suppose that $G'$ is a cograph minimal
$(\infty, k-1)$-polar obstruction that is a
$(1, k)$-polar graph, and let $G = K_2 + G'$.
Note that since $G'$ is not an $(\infty, k-1)$-polar
graph, $G$ is not an $(\infty, k)$-polar graph.
Moreover, for $v\in V_{G-G'}$, since $G'$ is a
$(1,k)$-polar graph, $G-v$ is also a $(1,k)$-polar
graph, while for $w\in G'$, since $G'-w$ is an
$(\infty, k-1)$-polar graph, we have that $G-w$ is an
$(\infty, k)$-polar graph.   Thus, $G$ is a cograph
minimal $(\infty, k)$-polar obstruction.

Conversely, let $G$ be a cograph minimal
$(\infty, k)$-polar obstruction of type $(k-j+2, p)$, so
$G \cong pK_1 + (k-j-p+1)K_2 + H$, where $H$ is a
connected non-trivial graph.   Thus, for
$G' = pK_1 + (k-j-p)K_2 + H$, we have that
$G \cong K_2 + G'$.   Observe that since $G$ is not an
$(\infty,k)$-polar graph, $G'$ is not an
$(\infty, k-1)$-polar graph.

Let $v \in V_{G-G'}$, and let $w$ be the only neighbour
of $v$ in $G$. Let $(A,B)$ be an $(\infty,k)$-polar
partition of $G-v$. Note that $w$ must belong to $A$,
or else $G$ would be an $(\infty, k)$-polar graph. Thus
$(A, B)$ is a $(1,k)$-polar partition of $G-v$, and
then $G'$ is a $(1, k)$-polar graph.   Hence, since $G'$
is not $(\infty, k-1)$-polar but it is $(1,k)$-polar,
$G'$ contains a cograph minimal $(\infty, k-1)$-polar
obstruction $G^*$ that is $(1, k)$-polar as an induced
subgraph, but we have shown at the beginning of the
proof that in such a case $K_2 + G^*$ is a cograph
minimal $(\infty, k)$-polar obstruction, so we have
that $G' = G^*$, which finishes the proof.
\end{proof}

A somewhat surprising consequence of the
previous results is that for $c\in\{k, k+1,
k+2\}$ and $i\in\{1, \dots, c-2\}$, there
exists exactly one cograph minimal $(\infty,
k)$-polar obstruction of type $(c, i)$.
We conjecture that the same is true for
any integer $k$ and every integer $c$ such
that $3 \le c \le k+2$.   In the following
proposition we specify the known cases.

\begin{corollary} 
\label{cor:ObsminGeq*k-1Comp}
Let $p$ and $k$ be non-negative integers.
\begin{enumerate}
	\item If $1 \le p \le k+1$, then the graph
		$pK_1 + (k-p+1)K_2 + K_{p,p}$ is a cograph
		minimal $(\infty, k)$-polar obstruction.
		Moreover, for $p \le k$, up to isomorphism,
		this is the only cograph minimal $(\infty,
		k)$-polar obstruction of type $(k+2, p)$.

	\item If $1 \le p \le k$, the graph
		$pK_1 + (k-p)K_2 + (\overline{K_2}
		\oplus (K_2 + pK_1))$ is a cograph minimal
		$(\infty, k)$-polar obstruction.   Moreover,
		for $p \le k-1$, up to isomorphism, this
		is the only cograph minimal $(\infty,
		k)$-polar obstruction of type $(k+1, p)$.

	\item If $1 \le p \le k-1$, the graph
		$pK_1 + (k-p-1)K_2 + (K_1 \oplus (2K_2 +
		pK_1)$ is a cograph minimal $(\infty,
		k)$-polar obstruction.   Moreover, for $p
		\le k-2$, up to isomorphism, this is the
		only cograph minimal $(\infty, k)$-polar
		obstruction of type $(k, p)$.
\end{enumerate}
\end{corollary}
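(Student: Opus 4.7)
All three items admit a common inductive scheme: fix $p \ge 1$ and induct on $k$, peeling off one $K_2$ component at each step via Theorem~\ref{theo:(infty,k)obsminFrom(infty,k-1)obsmin}. The induction bottoms out at the value of $k$ for which the formula has no $K_2$ components, namely $k = p-1$, $k = p$, and $k = p+1$ for items~1, 2, and~3 respectively; at these values the obstruction has type $(c, c-1)$ and is covered by the classifications of Section~\ref{sec:type(c,c-1)}, i.e., by Theorem~\ref{cor:ObsminType(k+2,k+1)}, Corollary~\ref{cor:ObsminType(k+1,k)}, and Corollary~\ref{cor:ObsminType(k,k-1)} respectively.

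For the existence half, the base case is the direct observation that the listed formula at its minimum admissible $k$ coincides with one of the graphs in the corresponding Section~\ref{sec:type(c,c-1)} list (whenever the relevant theorem applies, i.e., $k \ge 2$ or $k \ge 3$); the small remaining values $p \in \{1,2\}$ are handled using the explicit lists of cograph minimal $(\infty,0)$- and $(\infty,1)$-polar obstructions, the latter being the complements of the four graphs in Theorem~\ref{theo:Charact(1,infty)PO}. For the inductive step, I write $G_k = K_2 + G_{k-1}$; the inductive hypothesis identifies $G_{k-1}$ as a cograph minimal $(\infty,k-1)$-polar obstruction, and Theorem~\ref{theo:(infty,k)obsminFrom(infty,k-1)obsmin} then reduces the claim to verifying that $G_{k-1}$ is $(1,k)$-polar. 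This verification is witnessed by an explicit partition: for item~1, put into $A$ one part of $K_{p,p}$ together with the $p$ external isolated vertices, whereupon $B$ becomes a $k$-cluster with exactly $p + (k-p) = k$ clique components; for items~2 and~3, include in $A$ the two vertices of the $\overline{K_2}$ summand, respectively the universal vertex of the $K_1 \oplus (2K_2 + pK_1)$ summand, again together with the $p$ external isolated vertices, and a similar count gives exactly $k$ clique components in $B$.

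The uniqueness half proceeds by a parallel induction starting one step above the existence base, namely at $k = p$, $p+1$, and $p+2$ for items~1, 2, and~3. Whenever $k$ is strictly above this value, Theorem~\ref{theo:(infty,k)obsminFrom(infty,k-1)obsmin} decomposes an arbitrary minimal obstruction $G$ of the prescribed type as $G = K_2 + G'$, and the inductive hypothesis pins $G'$ down to the unique formula at $k-1$, so that $G$ is uniquely determined. The crux of the proof, and its main technical obstacle, is the uniqueness base case: there $G'$ has type $(c, c-1)$ and is therefore one of the $2$, $7$, or $8$ explicit graphs in the corresponding Section~\ref{sec:type(c,c-1)} classification, and one must isolate the unique list member that is also $(1, k)$-polar. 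For item~1 this is immediate: of the candidates $pK_1 + K_{p,p}$ and $pK_1 + (K_1 \oplus C_4)$, only the former is $(1,k)$-polar, since every independent-set deletion in $K_1 \oplus C_4$ leaves an induced $P_3$. For items~2 and~3 the lists are longer, but several entries (such as $\overline{2P_3}$ and $K_1 \oplus (K_2 + P_3)$, or graphs containing them as induced subgraphs) are themselves minimal $(1,\infty)$-polar obstructions by Theorem~\ref{theo:Charact(1,infty)PO} and therefore fail $(1,k)$-polarity for every $k$; the remaining entries are dispatched by a short case analysis of their maximum independent sets, which in each instance forces an induced $P_3$ on the cluster side. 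The expected survivors $\overline{K_2} \oplus (K_2 + pK_1)$ and $K_1 \oplus (2K_2 + pK_1)$ are themselves $(1,k)$-polar by the same partitions used in the existence inductive step, completing the argument.
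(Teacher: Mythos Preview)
Your proof is correct and follows essentially the same scheme as the paper: both arguments induct on $k$, use Theorem~\ref{theo:(infty,k)obsminFrom(infty,k-1)obsmin} to strip off a $K_2$ at the inductive step, and anchor the induction at the $(c,c-1)$ boundary via Theorem~\ref{cor:ObsminType(k+2,k+1)}, Corollary~\ref{cor:ObsminType(k+1,k)}, and Corollary~\ref{cor:ObsminType(k,k-1)}. The only organisational differences are that the paper lets $p$ vary and takes a single base case $k=1$ (handling the boundary case $p=k$ inside the inductive step), whereas you fix $p$ and place the base at $k=p$ (resp.\ $p+1$, $p+2$); and that you spell out the $(1,k)$-polarity check needed to isolate the correct graph from each Section~\ref{sec:type(c,c-1)} list, a step the paper leaves implicit.
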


\begin{proof}
Let $k$ and $p$ non-negative integers such that
$1 \le p \le k+1$.   It is routine to show that
$pK_1 + (k-p+1)K_2 + K_{p,p}$ is a cograph minimal
$(\infty ,k)$-polar obstruction that admits a
$(1, k+1)$-polar partition.

Suppose that $p \le k$.   We proceed by mathematical
induction on $k$ to show that the only cograph minimal
$(\infty ,k)$-polar obstruction of type $(k+2, p)$ is
isomorphic to $pK_1 + (k-p+1)K_2 + K_{p,p}$.

The base case, $k = 1$, follows from Theorem
\ref{theo:Charact(1,infty)PO}.   For the inductive step,
suppose that $k \ge 2$, and let $G$ be a cograph minimal
$(\infty, k)$-polar obstruction of type $(k+2, p)$.
Theorem \ref{theo:(infty,k)obsminFrom(infty,k-1)obsmin}
implies that $G \cong K_2 + G'$, where $G'$ is a cograph
minimal $(\infty, k-1)$-polar obstruction of type
$(k+1,p)$ that admits a $(1, k)$-polar partition.
If $p \le k-1$, the induction hypothesis implies that
$G' \cong pK_1 + (k-p)K_2 + K_{p,p}$, where the result
is immediate. In a similar way, if $p = k$, Theorem
\ref{cor:ObsminType(k+2,k+1)} implies that
$G' \cong pK_1 + (k-p)K_2 + K_{p,p}$, which ends the
proof of the first item. The proof of items 2 and 3
are analogous to the proof of item 1, but using
Corollaries \ref{cor:ObsminType(k+1,k)} and
\ref{cor:ObsminType(k,k-1)} instead of Theorem
\ref{cor:ObsminType(k+2,k+1)}.
\end{proof}

\section{Main results}\label{sec:MainResults}

Now, we are ready to state our main results.

\begin{theorem} 
\label{thm:(infty,2)obsmin}
Let $G$ be a cograph minimal $(\infty, 2)$-polar
obstruction. Then
	\begin{enumerate}
	\item $G$ is connected if and only if
		$\overline{G} \cong P_3 + H$, where $H$ is
		isomorphic to $K_1 \oplus C_4$,
		$\overline{2P_3}$, $K_2 \oplus 2K_2$,
		or $K_1 \oplus (K_2 + P_3)$,
	\item $G$ is disconnected and has no isolated vertices
		if and only if $G$ is isomorphic to $P_3 + C_4$,
		$P_3 + (K_1 \oplus 2K_2)$, or $2P_3 + K_2$,
	\item $G$ has exactly $4$ connected components if and
		only if $G$ is isomorphic to
		$3K_1 + (K_1 \oplus C_4)$, or
		$pK_1 + (3-p)K_2 + K_{p,p}$ for some integer $p$
		with $p \in \{1,2,3\}$,
	\item $G$ has exactly $3$ connected components and at
		least one isolated vertex if and only if $G$ is
		isomorphic to $2K_1 + \overline{2P_3}$,
		$2K_1 + (K_1 \oplus (P_3 + K_2))$,
		$2K_1 + (K_2 \oplus 2K_2)$,
		$2K_1 + (K_1 \oplus (C_4 + K_1))$,
		$2K_1 + (K_1 \oplus \overline{P_3 + K_2})$,
		$2K_1 + (\overline{K_2} \oplus (P_3 + K_1))$, or
		$pK_1 + (2-p)K_2 + (2K_1 \oplus (K_2 + pK_1))$ for
		some integer $p$ with $p \in \{1,2\}$,
	\item $G$ has exactly $2$ connected components and
		one isolated vertex if and only if $G$ is
		isomorphic to $K_1 + (K_1 \oplus (K_1 + 2K_2))$,
		$K_1 + (K_1 \oplus (2K_1 + C_4))$,
		$K_1 + (K_1 \oplus 2P_3)$, or
		$K_1 + (K_1 \oplus (K_1 + \overline{K_2 + P_3})).$
	\end{enumerate}
In conclusion, a graph $G$ is a cograph minimal
$(\infty, 2)$-polar obstruction if and only if it is
isomorphic to some of the 23 cographs listed before.
\end{theorem}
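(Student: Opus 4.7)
The plan is to classify cograph minimal $(\infty,2)$-polar obstructions by their type $(c,i)$ and to invoke in each stratum the appropriate result from Sections \ref{sec:PreliminaryResults}--\ref{sec:remainingTypes}. By item~\ref{it:AtMostk+2Components} of Lemma \ref{lem:BasicsOf(Infty,k)PO} with $k=2$ we have $c\le 4$, so the possible types are $(c,i)$ with $0\le i<c\le 4$; the five items of the statement partition these exactly as $(1,0)$ for item~1, $(c,0)$ with $c\ge 2$ for item~2, $(4,i)$ with $i\in\{0,1,2,3\}$ for item~3, $(3,i)$ with $i\in\{1,2\}$ for item~4, and $(2,1)$ for item~5. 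For item~1 I would apply Theorem \ref{theo:ConnectedObsmin} to reduce the connected case to the characterisation of connected cograph minimal polar obstructions, and then read off the four complements from Theorems \ref{theo:CharactPolarCographs} and \ref{theo:Charact(1,infty)PO}.

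For item~2 I would apply Theorems \ref{theo:ObsminWithoutIsolatedNorP3Comp} and \ref{theo:ObsminWithP3Compo}. Specialising the first to $k=2$ forces $k_1+k_2=3$ with each $k_i\ge 2$, because $k_i=1$ would require $H_i$ to be a cograph minimal $(1,0)$-polar obstruction other than $K_2$---and no such graph exists. Hence every obstruction in item~2 contains a $P_3$-component, and in particular type $(4,0)$ is empty. The second theorem evaluated at $k=2$ then produces exactly $2P_3+K_2$, $P_3+(K_1\oplus 2K_2)$ and $P_3+C_4$, the last arising from item~3 of Theorem \ref{theo:ObsminWithP3Compo} with $j=1$, since $C_4$ is the unique connected cograph minimal $(1,1)$-polar obstruction that is not $(1,\infty)$-polar.

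For items~3 and~4 I would combine Theorem \ref{cor:ObsminType(k+2,k+1)}, Corollary \ref{cor:ObsminType(k+1,k)}, Theorem \ref{theo:(infty,k)obsminFrom(infty,k-1)obsmin} and Corollary \ref{cor:ObsminGeq*k-1Comp}, all specialised to $k=2$: Theorem \ref{cor:ObsminType(k+2,k+1)} yields the two type $(4,3)$ obstructions $3K_1+K_{3,3}$ and $3K_1+(K_1\oplus C_4)$; Corollary \ref{cor:ObsminType(k+1,k)} yields the seven type $(3,2)$ obstructions; and Corollary \ref{cor:ObsminGeq*k-1Comp} handles the mixed types $(4,2)$, $(4,1)$ and $(3,1)$ via its first and second parametric families, producing respectively $2K_1+K_2+K_{2,2}$, $K_1+3K_2$ and $K_1+K_2+(\overline{K_2}\oplus(K_2+K_1))$. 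For item~5 I would apply Theorem \ref{theo:MinObsType(2,1)} with $k=2$: the disconnected cograph minimal $(\infty,1)$-polar obstructions are exactly the four complements of the graphs in Figure \ref{fig:CM(1,infty)PO}, namely $K_1+2K_2$, $2K_1+C_4$, $2P_3$ and $K_1+\overline{K_2+P_3}$, each $(1,2)$-polar by an explicit partition, and substituting each into $K_1+(K_1\oplus H')$ produces the four graphs listed.

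The only real obstacle is bookkeeping at the boundary of small $k$: Corollary \ref{cor:ObsminType(k,k-1)} is stated only for $k\ge 3$, and the uniqueness clauses of Corollary \ref{cor:ObsminGeq*k-1Comp} become vacuous in the extremal regime, so the marginal cases $(2,1)$ and $(3,2)$ cannot be shortcut through a general uniqueness statement and require Theorem \ref{theo:MinObsType(2,1)} and Corollary \ref{cor:ObsminType(k+1,k)} directly. After matching every type with its generator, the final tally $4+3+4+8+4=23$ confirms the count claimed in the statement.
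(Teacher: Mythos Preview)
Your proof is correct and follows essentially the same approach as the paper's own proof, which simply cites Theorems \ref{theo:CharactPolarCographs}, \ref{theo:ConnectedObsmin}, \ref{theo:ObsminWithoutIsolatedNorP3Comp}, \ref{theo:ObsminWithP3Compo}, \ref{cor:ObsminType(k+2,k+1)}, \ref{theo:MinObsType(2,1)} and Corollaries \ref{cor:ObsminType(k+1,k)}, \ref{cor:ObsminGeq*k-1Comp} for the respective items and then invokes Lemma \ref{lem:BasicsOf(Infty,k)PO} to bound the number of components; you have additionally spelled out the specialisations to $k=2$ and verified the bookkeeping. One tiny imprecision: the five conditions in the statement do not literally \emph{partition} the types, since type $(4,0)$ is covered by both items~2 and~3, but your argument that this type is empty (via Theorem \ref{theo:ObsminWithoutIsolatedNorP3Comp}) resolves the overlap and makes both ``if and only if'' claims consistent.
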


\begin{proof}
Item 1 follows from Theorems
\ref{theo:CharactPolarCographs} and
\ref{theo:ConnectedObsmin}, item 2 follows
from Theorems \ref{theo:ObsminWithoutIsolatedNorP3Comp}
and \ref{theo:ObsminWithP3Compo}, items 3 and 4 follow
from Theorem \ref{cor:ObsminType(k+2,k+1)}, Corollaries
\ref{cor:ObsminType(k+1,k)} and
\ref{cor:ObsminGeq*k-1Comp}, and item 5 follows from
Theorem \ref{theo:MinObsType(2,1)}.

By Lemma \ref{lem:BasicsOf(Infty,k)PO} we have that
every cograph minimal $(\infty, 2)$-polar obstruction
has at most $4$ connected components, so the listed
graphs are all the cograph minimal $(\infty, 2)$-polar
obstructions.
\end{proof}

\begin{theorem} 
Let $G$ be a cograph minimal $(\infty, 3)$-polar
obstruction. Then
	\begin{enumerate}
	\item $G$ is connected if and only if
		$\overline{G} \cong P_3 + H$, where $H$ is
		isomorphic to $K_1 \oplus C_4$, $\overline{2P_3}$,
		$K_2 \oplus 2K_2$, or $K_1 \oplus (K_2 + P_3)$,

	\item $G$ is disconnected and has neither isolated
		vertices nor components isomorphic to $P_3$ if and
		only if $G$ is isomorphic to $2C_4$,
		$2(K_1 \oplus 2K_2)$, or $C_4 + (K_1 \oplus 2K_2)$,

	\item $G$ is disconnected and has at least one component
		isomorphic to $P_3$ if and only if $G \cong P_3 +H$,
		where $H$ is isomorphic to $P_3 + 2K_2$,
		$K_2 + (K_1 \oplus 2K_2)$,
		$K_2 + C_4$, $K_{3,3}$,
		$2K_1 \oplus (K_2 + 2K_1)$,
		$K_1 \oplus (K_1 + 2K_2)$, $K_1 \oplus C_4$,
		$\overline{2P_3}$, $K_2 \oplus 2K_2$, or
		$K_1 \oplus (K_2 + P_3)$,

	\item $G$ has exactly $5$ connected components if and
		only if $G$ is isomorphic to
		$4K_1 + (K_1 \oplus C_4)$, or
		$pK_1 + (4-p)K_2 + K_{p,p}$ for some integer $p$
		with $p \in \{1,2,3,4\}$,

	\item $G$ has exactly $4$ connected components and at
		least one isolated vertex if and only if either
		$G \cong K_1 + H$, where $H$ is
		isomorphic to $\overline{2P_3}$,
		$K_1 \oplus (P_3 + K_2)$,
		$K_2 \oplus 2K_2$,
		$K_1 \oplus (C_4 + K_1)$,
		$K_1 \oplus \overline{P_3 + K_2}$, or
		$\overline{K_2} \oplus (P_3 + K_1)$, or $G$ is
		isomorphic to
		$pK_1 + (3-p)K_2 + (2K_1 \oplus (K_2 + pK_1))$ for
		some integer $p$ with $p \in \{1,2,3\}$,

	\item $G$ has exactly $3$ connected components and at
		least one isolated vertex if and only if either
		$G \cong 2K_1 + H$, where $H$ is isomorphic to
		$K_1 \oplus (C_4 + 2K_1)$,
		$K_1 \oplus 2P_3$,
		$K_1 \oplus (K_1 + \overline{P_3 + K_2})$,
		$K_1 \oplus (K_2 + \overline{P_3 + K_1})$,
		$K_2 \oplus (K_1 + 2K_2)$,
		$K_1 \oplus (K_1 + K_2 + P_3)$, or
		$K_1 \oplus (K_1 + (K_1 \oplus 2K_2))$, or
		$G$ is isomorphic to
		$pK_1 + (2-p)K_2 + (K_1 \oplus (2K_2 + pK_1))$
		 for some integer $p$ with $p \in \{1,2\}$,

	\item $G$ has exactly $2$ connected components and
		one isolated vertex if and only if
		$G \cong K_1 + (K_1 \oplus H)$, where $H$ is
		isomorphic to $P_3 + C_4$,
		$P_3 + (K_1 \oplus 2K_2)$, $2P_3 + K_2$,
		$K_1 + 3K_2$, $2K_1 + K_2 + C_4$,
		$3K_1 + K_{3,3}$,
		$K_1 + K_2 + (2K_1 \oplus (K_2 + K_1))$,
		$2K_1 + (2K_1 \oplus (K_2 + 2K_1))$, or
		$K_1 + (K_1 \oplus (K_1 + 2K_2))$.
	\end{enumerate}

In conclusion, a graph $G$ is a cograph minimal
$(\infty, 3)$-polar obstruction if and only if it is
isomorphic to one of the 49 cographs listed before.
\end{theorem}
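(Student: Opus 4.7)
The plan is to argue by the type $(c, i)$ of the obstruction, following the same template used to prove Theorem \ref{thm:(infty,2)obsmin}. By Lemma \ref{lem:BasicsOf(Infty,k)PO}, any cograph minimal $(\infty, 3)$-polar obstruction has at most $k + 2 = 5$ connected components and at most $k + 1 = 4$ of them trivial, so the seven items collectively exhaust every admissible $(c, i)$.

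The connected case (item 1, type $(1,0)$) is immediate from Theorem \ref{theo:ConnectedObsmin} combined with Theorem \ref{theo:CharactPolarCographs}. The obstructions with no isolated vertex split naturally according to whether they contain a component isomorphic to $P_3$. For item 2 I would specialize Theorem \ref{theo:ObsminWithoutIsolatedNorP3Comp} to $k = 3$: the constraints $k_1 + k_2 = 4$ and $k_i \geq 2$ (from Lemma \ref{lem:CharacterizationOfAitchs}) force $k_1 = k_2 = 2$, and the admissible choices for each $H_i$ reduce to $\{C_4, K_1 \oplus 2K_2\}$, giving the three unordered pairs listed. Item 3 is the specialization of Theorem \ref{theo:ObsminWithP3Compo} to $k = 3$; its four sub-cases, combined with Theorems \ref{theo:Charact(1,infty)PO} and \ref{theo:Charact(1,k)PO}, produce the ten listed graphs.

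For the obstructions with at least one isolated vertex I would further distinguish the type $(c, c-1)$ obstructions (exactly one non-trivial component) from those containing both an isolated vertex and a non-trivial complete component, which must be a $K_2$ by Lemma \ref{lem:BasicsOf(Infty,k)PO}. The type $(c, c-1)$ sub-cases come from Theorem \ref{cor:ObsminType(k+2,k+1)} when $c = 5$, Corollary \ref{cor:ObsminType(k+1,k)} when $c = 4$, Corollary \ref{cor:ObsminType(k,k-1)} when $c = 3$, and Theorem \ref{theo:MinObsType(2,1)} when $c = 2$. All the remaining sub-cases within items 4, 5 and 6 (the parametric families $pK_1 + (4-p)K_2 + K_{p, p}$, $pK_1 + (3-p)K_2 + (2K_1 \oplus (K_2 + pK_1))$, and $pK_1 + (2-p)K_2 + (K_1 \oplus (2K_2 + pK_1))$) follow from Corollary \ref{cor:ObsminGeq*k-1Comp}, which asserts that each admissible value of $p$ yields exactly one obstruction of the corresponding type.

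The main technical obstacle is item 7. Theorem \ref{theo:MinObsType(2,1)} reduces this case to listing the disconnected cograph minimal $(\infty, 2)$-polar obstructions that are additionally $(1, 3)$-polar. My plan is to scan the nineteen disconnected obstructions catalogued in items 2--5 of Theorem \ref{thm:(infty,2)obsmin} and decide, for each candidate $H'$, whether the vertex set of $H'$ admits a partition $(A, B)$ with $A$ independent and $B$ a $3$-cluster. Most candidates admit such a partition at once by placing a complete-multipartite component's parts across $A$ and $B$; the delicate cases, such as $2K_1 + \overline{2P_3}$ and $3K_1 + (K_1 \oplus C_4)$, require a brief case analysis showing that every maximum independent set leaves a residue that is not a $3$-cluster. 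The sift retains exactly the nine graphs appearing in item 7, and summing the counts from items 1--7 yields the announced total of $49$ minimal obstructions.
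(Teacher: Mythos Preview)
Your proposal is correct and follows essentially the same route as the paper's own proof: each item is derived from the corresponding structural result (Theorems \ref{theo:ConnectedObsmin}, \ref{theo:ObsminWithoutIsolatedNorP3Comp}, \ref{theo:ObsminWithP3Compo}, \ref{cor:ObsminType(k+2,k+1)}, \ref{theo:MinObsType(2,1)} and Corollaries \ref{cor:ObsminType(k+1,k)}, \ref{cor:ObsminType(k,k-1)}, \ref{cor:ObsminGeq*k-1Comp}), and item 7 is obtained by filtering the disconnected $(\infty,2)$-obstructions of Theorem \ref{thm:(infty,2)obsmin} for $(1,3)$-polarity. Your write-up is in fact more explicit than the paper's terse proof, particularly in spelling out why $k_1=k_2=2$ is forced in item 2 and in describing the $(1,3)$-polarity sift for item 7.
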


\begin{proof}
Item 1 follows from Theorems
\ref{theo:CharactPolarCographs} and
\ref{theo:ConnectedObsmin}, item 2 follows
from Theorem \ref{theo:ObsminWithoutIsolatedNorP3Comp},
item 3 follows from Theorem \ref{theo:ObsminWithP3Compo},
items 4, 5 and 6 follow from Theorem
\ref{cor:ObsminType(k+2,k+1)}, and Corollaries
\ref{cor:ObsminType(k+1,k)},
\ref{cor:ObsminType(k,k-1)} and
\ref{cor:ObsminGeq*k-1Comp}, and item 7 follows from
Theorems \ref{theo:MinObsType(2,1)} and
\ref{thm:(infty,2)obsmin}.

From Lemma \ref{lem:BasicsOf(Infty,k)PO} we have that
every cograph minimal $(\infty, 3)$-polar obstruction
has at most $5$ connected components, so the listed
graphs are all the cograph minimal $(\infty, 3)$-polar
obstructions.
\end{proof}


\section{Conclusions}\label{sec:Conclusions}


Exact lists of cograph minimal $(\infty, k)$-polar
obstructions are known when $k \le 1$.   In this
work we present many results focused on the recursive
construction of cograph minimal $(\infty, k)$-polar
obstructions for an arbitrary integer $k$.   After
introducing a simple classification of the obstructions
based on their number of connected components ($c$)
and isolated vertices ($i$) into types $(c,i)$, we
provided several structural results from which we
established recursive characterizations for many
types in the classification.   More specifically, we
know, by Lemma \ref{lem:BasicsOf(Infty,k)PO}, that
every cograph minimal $(\infty, k )$-polar obstruction
satisfies the constraint $0\le i\le c-1< k+2$, and
we provided recursive characterizations for every
possible type $(c,i)$ but $(c, c-1)$, where
$c \in \{3,\dots,k-1\}$.
Although our results are not enough to describe
all the cograph minimal $(\infty, k)$-polar
obstructions for an arbitrary $k$, we used them to
exhibit complete lists for the cases $k = 2$ and
$k = 3$.   It seems that our techniques might not
be enough to produce an easy recursive formula to
construct the missing obstructions, but we still
think it might be possible to have a general
formula.

\begin{problem}
For a positive integer $k$, find a recursive
characterization for the cograph minimal $(\infty,
k)$-polar obstructions.
\end{problem}

As we observed in Section \ref{sec:remainingTypes},
for some specific values of $c$ and $i$ it can be
proved that there exists exactly one cograph minimal
$(\infty,k)$-polar obstruction of type $(c,i)$.
We believe that our result can be extended in the
following way.

\begin{conjecture}
Let $k,c$ and $i$ be integers such that
$1 \le i \le c-2 \le k$.
Then, there exists exactly one cograph minimal
$(\infty, k)$-polar obstruction of type $(c,i)$.
\end{conjecture}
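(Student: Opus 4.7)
The plan is to argue by strong induction on $k$, using Theorem~\ref{theo:(infty,k)obsminFrom(infty,k-1)obsmin} as the main engine. Under the hypothesis $1 \le i$ and $c - i \ge 2$ --- which is exactly the scope of the conjecture --- that theorem provides a bijection between cograph minimal $(\infty, k)$-polar obstructions of type $(c, i)$ and cograph minimal $(\infty, k-1)$-polar obstructions of type $(c-1, i)$ that are $(1, k)$-polar. The base case $k = 1$ only admits $(c, i) = (3, 1)$, whose unique obstruction is $K_1 + 2K_2$ by Remark~\ref{rem:OnlyCluster(Infty,k)PO}; for $k \ge 2$, the boundary cases $c \in \{k, k+1, k+2\}$ are already resolved by Corollary~\ref{cor:ObsminGeq*k-1Comp}. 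To make the induction self-reinforcing, I would strengthen the statement to a concrete description of the unique obstruction, namely
\[
G_{c, i, k} = iK_1 + (c - i - 1)K_2 + H_{c, i, k},
\]
with $H_{c, i, k}$ matching the three forms of Corollary~\ref{cor:ObsminGeq*k-1Comp} for $c \in \{k, k+1, k+2\}$ and equal to $K_1 \oplus ((k + 2 - c)K_2 + iK_1)$ for $3 \le c \le k - 1$. This formula is internally consistent with the bijection, as $G_{c, i, k} = K_2 + G_{c - 1, i, k - 1}$ whenever $c \le k$.

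For the inductive step, fix $k \ge 4$ and a type $(c, i)$ with $3 \le c \le k - 1$ and $1 \le i \le c - 2$. Suppose first that $i \le c - 3$, so that $(c - 1, i)$ still satisfies the conjecture's constraint at level $k - 1$; then the inductive hypothesis yields the unique $G_{c - 1, i, k - 1}$, and a direct inspection shows that $G_{c - 1, i, k - 1}$ is $(1, k)$-polar: place the $i$ top-level isolated vertices together with the universal vertex of the non-complete component into $A$ (they are pairwise non-adjacent), and distribute the $c - i - 2$ top-level $K_2$'s, the $k + 2 - c$ internal $K_2$'s, and the $i$ internal $K_1$'s into $B$ as $k$ cliques. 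Theorem~\ref{theo:(infty,k)obsminFrom(infty,k-1)obsmin} then yields uniqueness of $G_{c, i, k}$ in this sub-case.

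The remaining sub-case, $i = c - 2$, is the main obstacle, because the target type $(c - 1, c - 2)$ in the $(\infty, k-1)$-setting is of the form $(c', c' - 1)$, which lies outside the scope of the inductive hypothesis and, for $c' \in \{3, \ldots, k - 2\}$, is not covered by the explicit results of Section~\ref{sec:type(c,c-1)}. To close the gap, I would establish the following auxiliary uniqueness lemma: the unique cograph minimal $(\infty, k')$-polar obstruction of type $(c', c' - 1)$ that is $(1, k' + 1)$-polar is $(c' - 1)K_1 + K_1 \oplus ((k' + 2 - c')K_2 + (c' - 1)K_1)$. This auxiliary lemma would itself be proved by a nested induction on $k'$: the base case $c' = 2$ is handled by Theorem~\ref{theo:MinObsType(2,1)} together with the $(1, k' + 1)$-polarity filter, which forces the disconnected minimal obstruction appearing in that theorem's decomposition to be the unique cluster $K_1 + k'K_2$ of Remark~\ref{rem:OnlyCluster(Infty,k)PO} (a non-cluster alternative would require $\alpha(H') + \omega(H') \ge |V(H')|$, which one checks fails for every other candidate); the inductive step relies on a cotree-based extension analysis in the spirit of Lemmas~\ref{lem:ExtendCompleteMultipartite1}--\ref{lem:ExtendCograph} to rule out alternative non-complete components. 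Once this auxiliary lemma is established, the sub-case $i = c - 2$ follows, the induction on $k$ closes, and the conjecture is proved.
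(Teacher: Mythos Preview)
The statement you are attempting to prove is presented in the paper as an open \emph{conjecture}, not a theorem; the paper gives no proof and explicitly says in Section~\ref{sec:Conclusions} that ``our techniques might not be enough to produce an easy recursive formula to construct the missing obstructions.'' So there is no paper proof to compare against; the relevant question is whether your argument actually closes the gap.

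It does not. Your overall induction via Theorem~\ref{theo:(infty,k)obsminFrom(infty,k-1)obsmin} is exactly the route the authors used to obtain the partial result Corollary~\ref{cor:ObsminGeq*k-1Comp}, and you correctly isolate the obstacle: when $i=c-2$, the reduction lands on a type $(c',c'-1)$ obstruction for $(\infty,k-1)$ with $c'\in\{3,\dots,k-2\}$, and these are precisely the types for which the paper has \emph{no} characterization (see the last paragraph of Section~\ref{sec:Conclusions}). Your ``auxiliary uniqueness lemma'' is therefore not a lemma but a restatement of the hard case, and your proposed proof of its inductive step---``a cotree-based extension analysis in the spirit of Lemmas~\ref{lem:ExtendCompleteMultipartite1}--\ref{lem:ExtendCograph}''---is an aspiration, not an argument. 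Those lemmas are each tailored to a very specific starting graph ($K_1\oplus C_4$, a balanced complete bipartite graph, $K_1\oplus(2K_2+K_1)$); producing an analogue that works uniformly for the graphs $K_1\oplus((k'+2-c')K_2+(c'-1)K_1)$ across all $c'$ is the whole difficulty, and nothing in your sketch indicates how to do it.

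Even your base case $c'=2$ for the auxiliary lemma is not established. Theorem~\ref{theo:MinObsType(2,1)} gives $G=K_1+(K_1\oplus H')$ with $H'$ ranging over \emph{all} disconnected cograph minimal $(\infty,k'-1)$-polar obstructions that are $(1,k')$-polar; you then need to show that the additional $(1,k'+1)$-polarity constraint on $G$ forces $H'\cong K_1+k'K_2$. Your one-line justification (``a non-cluster alternative would require $\alpha(H')+\omega(H')\ge |V(H')|$'') is neither the right criterion nor checked against the actual list of candidate $H'$, which for general $k'$ is itself not known in closed form. In short, you have reduced the conjecture to an equivalent (and equally open) statement about $(c',c'-1)$-type obstructions filtered by $(1,k'+1)$-polarity, but you have not proved that statement.
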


Additionally, our results on exact lists of
cograph minimal $(\infty, 2)$- and
$(\infty,3)$-polar obstructions support the
following assertion.

\begin{conjecture}
For every cograph minimal $(\infty, k)$-polar
obstruction $G$, the order of $G$ is at most
$3(k+1)$.
\end{conjecture}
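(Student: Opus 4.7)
The plan is to prove the bound by strong induction on $k$, exploiting the type decomposition developed throughout the paper; the base cases $k \in \{0,1\}$ are immediate from the known explicit lists, and I would handle the inductive step by walking through the classification by type $(c,i)$. Connected obstructions collapse, by Theorem \ref{theo:ConnectedObsmin}, to connected cograph minimal polar obstructions, which by Theorems \ref{theo:CharactPolarCographs} and \ref{theo:Charact(1,infty)PO} have at most $3+6=9$ vertices; since $k \geq 2$ in this case, $9 \leq 3(k+1)$. The cluster obstruction $K_1 + (k+1)K_2$ of Remark \ref{rem:OnlyCluster(Infty,k)PO} has $2k+3$ vertices, well under the bound.

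For type $(c,0)$ I would invoke Theorems \ref{theo:ObsminWithoutIsolatedNorP3Comp} and \ref{theo:ObsminWithP3Compo}: in the decomposition $G \cong H_1 + H_2$ with $k_1+k_2=k+1$, reading off the permissible $H_i$ from Theorem \ref{theo:Charact(1,k)PO} shows $|H_i| \leq 2k_i+1$, so that $|G| \leq 2(k+1)+2 \leq 3(k+1)$; the $P_3 + H$ case is analogous and gives $|G| \leq 2k+4$ in every branch. For types $(c,i)$ with $1 \leq i \leq c-2$, Theorem \ref{theo:(infty,k)obsminFrom(infty,k-1)obsmin} writes $G \cong K_2 + G'$ with $G'$ a cograph minimal $(\infty,k-1)$-polar obstruction, and induction yields $|G| = 2+|G'| \leq 3k+2 < 3(k+1)$. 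For type $(c,c-1)$ the easy subcases are $(2,1)$, $(k+2,k+1)$, $(k+1,k)$ and $(k,k-1)$: the first is handled via Theorem \ref{theo:MinObsType(2,1)} and induction (writing $G \cong K_1 + (K_1 \oplus H')$ with $H'$ a strictly smaller obstruction), and the remaining three are covered by the explicit enumerations in Theorem \ref{cor:ObsminType(k+2,k+1)} and Corollaries \ref{cor:ObsminType(k+1,k)}, \ref{cor:ObsminType(k,k-1)}, and \ref{cor:ObsminGeq*k-1Comp}. In particular the family $pK_1 + (k-p+1)K_2 + K_{p,p}$ with $p=k+1$ achieves $3(k+1)$ vertices, so the bound is tight and there is no slack to play with.

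The main obstacle is precisely the intermediate types $(c,c-1)$ with $3 \leq c \leq k-1$, for which the paper leaves the recursive characterization open. Here Lemma \ref{lem:Obsmin(k-j+1, k-j)} (with $j = k-c+1$) only tells us that $G \cong (c-1)K_1 + H$ for a connected non-complete cograph $H$ that is $(\infty,j+1)$-polar but not $(\infty,j)$-polar, and whose vertex-deleted subgraphs are each $(1,k)$- or $(\infty,j)$-polar. The natural attempt is to pick inside $H$ a minimal $(\infty,j)$-polar obstruction $H'$, which is necessarily disconnected by Theorem \ref{theo:ConnectedObsmin}; then bound $|H'| \leq 3(j+1)$ by induction, and control the one-vertex extensions from $H'$ to $H$ in the style of Lemmas \ref{lem:ExtendCompleteMultipartite1}--\ref{lem:ExtendCograph}, using $(1,k)$-polarity of $H-v$ to rule out large extensions. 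Coupled with the previous conjecture (which predicts uniqueness and precise structure for these types) this would immediately deliver $|G| \leq 3(k+1)$; extracting the same estimate unconditionally appears to require genuinely new structural lemmas beyond the techniques currently developed in the paper, and this is the step I expect to be the hard one.
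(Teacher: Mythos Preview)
The statement you are attempting to prove is listed in the paper as a \emph{conjecture}, not a theorem; the paper offers no proof for it, only the remark that the explicit lists for $k=2$ and $k=3$ support it. So there is no ``paper's own proof'' to compare against, and your proposal is not a rederivation but an attempt at a genuinely open problem.

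That said, your case analysis is accurate as far as it goes: the bounds you extract for connected obstructions, the cluster obstruction, type $(c,0)$, types $(c,i)$ with $1 \le i \le c-2$, and the four special $(c,c-1)$ types are all correct, and the tightness at $(k+2,k+1)$ via $(k+1)K_1 + K_{k+1,k+1}$ is the right observation. You also correctly identify the gap: the intermediate types $(c,c-1)$ with $3 \le c \le k-1$ are precisely the cases for which the paper has no recursive characterization, and Lemma~\ref{lem:Obsmin(k-j+1, k-j)} alone does not bound $|H|$. Your sketch for attacking them (embed a disconnected $(\infty,j)$-obstruction $H'$ inside $H$, bound $|H'|$ by induction, then control the extensions) is a natural line, but as you yourself note it would require new extension lemmas in the spirit of Lemmas~\ref{lem:ExtendCompleteMultipartite1}--\ref{lem:ExtendCograph}, which the paper does not supply. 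So your proposal is an honest outline of a partial result plus an acknowledged open step, not a complete proof; this matches the status the paper assigns to the statement.
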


A complete solution of problems like the one
addressed in this work might give us a better
understanding of the minimal obstructions for
hereditary properties in the class of cographs,
which in turn, might lead to efficient algorithms
to produce such obstructions directly (as opposed
to making exhaustive searches through all the
cographs).



\begin{thebibliography}{15}
\bibitem{bondy2008}
	J.A.~Bondy and U.S.R~Murty,
	Graph Theory,
	Springer, Berlin, 2008.

\bibitem{bravo}
	R.S.F.~Bravo, L.T.~Nogueira, F.~Protti and C.~Vianna,
	Minimal obstructions of $(2,1)$-cographs with external restrictions,
	in: Annals of I ETC - Encontro de Teoria da Computa\c c\~ao
	(CSBC 2016) (2016) Porto Alegre, \url{http://www.pucrs.br/edipucrs/},
	ISSN 2175-2761 (In Portuguese).

\bibitem{contrerasDAM}
	F.E.~Contreras-Mendoza and C.~Hern\'andez-Cruz,
	Minimal obstructions to $(s, 1)$-polarity in cographs,
	Discrete Applied Mathematics 281 (2020) 111--117.

\bibitem{churchleySIAMJDM25}
	R.~Churchley and J.~Huang,
	Line-Polar Graphs: Characterization and Recognition,
	SIAM Jornal on  Discrete Mathematics 25(3) (2011) 1269--1284.

\bibitem{churchleyJGT76}
	R.~Churchley and J.~Huang,
	On the Polarity and Monopolarity of Graphs,
	Journal of Graph Theory 76(2) (2014) 138--148.

\bibitem{corneilDAM3}
	D. G. Corneil, H. Lerchs and L. Stewart Burlingham,
	Complement reducible graphs,
	Discrete Applied Mathematics 3(3) (1981) 163--174.

\bibitem{chernyakDM62}
	Z.A.~Chernyak and A.A.~Chernyak,
	About recognizing $(\alpha,\beta)$-classes of polar graphs,
	Discrete Math. 62 (1986) 133--138.

\bibitem{damaschkeJGT14}
	P.~Damaschke,
	Induced subgraphs and well-quasi-ordering,
	Journal of Graph Theory 14(4) (1990) 427--435.

\bibitem{ekimIWOCA2009}
	T.~Ekim, P.~Heggernes, D.~Meister,
	Polar Permutation Graphs,
	In: J.~Fiala, J.~Kratochv\'il, M.~Miller (eds) Combinatorial
	Algorithms. IWOCA 2009. Lecture Notes in Computer Science,
	vol 5874. Springer, Berlin, Heidelberg.

\bibitem{ekimDAM156}
	T.~Ekim, P.~Hell, J.~Stacho and D.~de~Werra,
	Polarity of chordal graphs,
	Discrete Applied Mathematics 156(13) (2008) 2469--2479.

\bibitem{ekimDAM156a}
	T.~Ekim, N.V.R.~Mahadev and D.~de Werra,
	Polar cographs,
	Discrete Applied Mathematics 156 (2008) 1652--1660.

\bibitem{ekimDAM171}
	T.~Ekim, N.V.R.~Mahadev and D.~de Werra,
	Corrigendum to ``Polar cographs'',
	Discrete Applied Mathematics 156 (2014) 158.

\bibitem{farrugiaEJC11}
	A.~Farrugia,
	Vertex-partitioning into fixed additive induced-hereditary
	properties is $NP$-hard,
	Electron. J. Combin. 11 (2004) \#R46.

\bibitem{federSIAMJDM16}
	T.~Feder, P.~Hell, S.~Klein and R.~Motwani,
	List Partitions,
	SIAM J. Discrete Math. 16(3) (2003) 449--478.

\bibitem{feder2006}
	T.~Feder, P.~Hell and W.~Hochst\"attler,
	Generalized Colourings (Matrix Partitions) of Cographs,
	in: Graph Theory in Paris, Birkhauser, 2006, 149--167.

\bibitem{foldesSECGTC}
	S.~Foldes and P.L~Hammer,
	Split graphs,
	in: Proc. 8th Sout-Eastern Conf. on Combinatorics, Graph
	Theory and Computing, 1977, 311--315.

\bibitem{hellDAM}
	P.~Hell, C.~Hern\'andez-Cruz and C.~Linhares-Sales,
	Minimal Obstructions to $2$-polar cographs,
	Discrete Applied Mathematics 261 (2019) 219--228.

\bibitem{leTCS528}
	V.~B.~Le and R.~Nevries,
	Complexity and algorithms for recognizing polar and
	monopolar graphs,
	Theoretical Computer Science 528 (2014) 1--11.

\end{thebibliography}
\end{document}